\begin{document}

\title{\huge Robust High-dimensional Tuning Free Multiple Testing\thanks{Supported by NSF grants DMS-2210833, DMS-2053832, DMS-2052926 and ONR grant N00014-22-1-2340}}

\author{Jianqing Fan\qquad Zhipeng Lou\qquad Mengxin Yu}


\maketitle

\begin{abstract}
A stylized feature of high-dimensional data is that many variables have heavy tails, and robust statistical inference is critical for valid large-scale statistical inference. Yet, the existing developments such as Winsorization, Huberization and median of means require the bounded second moments and involve variable-dependent tuning parameters, which hamper their fidelity in applications to large-scale problems. To liberate these constraints, this paper revisits the celebrated Hodges-Lehmann (HL) estimator for estimating location parameters in both the one- and two-sample problems, from a non-asymptotic perspective. Our study develops Berry-Esseen inequality and Cram\'{e}r type moderate deviation for the HL estimator based on newly developed non-asymptotic Bahadur representation, and builds data-driven confidence intervals via a weighted bootstrap approach. These results allow us to extend the HL estimator to large-scale studies and propose \emph{tuning-free} and \emph{moment-free} high-dimensional inference procedures for testing global null and for large-scale multiple testing with false discovery proportion control. It is convincingly shown that the resulting tuning-free and moment-free methods control false discovery proportion at a prescribed level.  The simulation studies lend further support to our developed theory.
\end{abstract}


\section{Introduction}
Large-scale, high-dimensional data with rich structures have  been widely collected in almost all scientific disciplines and humanities, thanks to the advancements of modern technologies. Massive developments have been made in statistics over the past two decades on extracting valuable information from these high dimensional data; see~\cite{buHLmann2011statistics, hastie2009elements, hastie2015statistical, wainwright2019high, fan2020statistical, chen2021spectral} for a detailed account and references therein.

Despite convenience for theoretical analysis, the sub-Gaussian tails condition is  not realistic in many applications that involve high-dimensional variables. For instance, it is well known that heavy-tailed distributions is a stylized
feature for financial returns and macroeconomic variables~\citep{cont2001empirical,stock2002macroeconomic,fan2017elements,fan2021shrinkage}. Therefore, tools designed for sub-Gaussian data can lead to erroneous scientific conclusions. Asking thousands of gene expressions to have all sub-Gaussian tails is a mathematical dream, not a reality that data scientists face. For example, comparing gene expression profiles between various cell sub-populations, especially after treatments and therapies, is an essential statistical task~\citep{nagalakshmi2008transcriptional, shendure2008next, wang2009rna, li2012normalization, li2013finding, gupta2014transcriptome, finotello2015measuring}. However, it is unrealistic to hope all thousands of gene expressions have sub-Gaussian distributions: outliers in non-sub-Gaussian distributions can have a significant impact on nonrobust procedures and lead to many false positives and negatives~\citep{gupta2014transcriptome, wang2015high}. The situation also arises for inferences using functional magnetic resonance imaging (fMRI) data since the data do not conform to the assumed Gaussian distribution~\citep{eklund2016cluster}. These practical challenges demand for developing efficient and reliable robust inference methods.

Recently, robust statistical methods have gained popularity as a mean of resolving outliers and heavy-tailed noises.  Many preceding arts have taken a significant stride toward effective statistical estimation under heavy-tailed distributions.  For instance, aiming at dealing with the heavy-tailed noise contamination, the Huber regression is proposed~\citep{huber1973robust}, and subsequent publications along these lines include~\citet{yohai1979asymptotic, mammen1989asymptotics, he1996general, he2000parameters}, where the asymptotic properties of the Huber estimator have been thoroughly investigated. From a non-asymptotic perspective, the Huber-type estimator was recently revisited by~\citet{sun2020adaptive}, in which the authors propose an adaptive Huber regression method and establish its non-asymptotic deviation bounds by only requiring finite $(1 + \delta)$-th moment of the noise with any $\delta > 0$.  Moreover, using a similar idea for making the correspondent $M$-estimator insensitive to extreme values, \citet{catoni2012challenging} developed a novel approach through minimizing a robust empirical loss. It is demonstrated that the estimator has exponential concentration around the true mean and enjoys the same statistical rate as the sample average for sub-Gaussian distributions when the population only has a bounded second moment. \citet{brownlees2015empirical} further investigates empirical risk minimization based on the robust estimator proposed in~\citet{catoni2012challenging}. Additionally, the so-called~\emph{median of means} strategy, which can be traced back to~\citet{nemirovskij1983problem}, is another successful method for handling heavy-tailed distributions. By only requiring bounded second moment, it achieves the sub-Gaussian type of concentration around the population mean parameter. \citet{minsker2015geometric} and~\citet{hsu2016loss} further generalize this idea to multivariate cases. Moreover, there also exists a series of works that focus on solving the issue caused by heavy-tailed noises using quantile-based robust estimation; see~\citet{Arcones1995, koenker2001quantile, Belloni2011, fan2014adaptive, zheng2015globally} for more details. Furthermore, recently, in~\citet{fan2021shrinkage, yang2017high, fan2022understanding}, under heavy-tailed contamination, they proposed a novel principle by simply truncating or shrinking the response variables appropriately to achieve sub-Gaussian rates, and they only require bounded second moment of the measurements. Additionally, the aforementioned methodologies can also be applied to a wide range of problems, such as matrix sensing, matrix completion, robust PCA, factor analysis, and neural networks. For interested readers, we refer to~\citet{minsker2015geometric, hsu2016loss, fan2017estimation, loh2017statistical, minsker2018sub, goldstein2018structured,wang2020tuning, fan2022latent, wang2022robust, fan2022noise} for more details.

While many effective solutions have been developed to address the problem of heavy-tailedness, these solutions still have some potential shortcomings. In specific, the developments call for the second moments to be bounded and are primarily based on shrinkage data, Huber-type of loss, median of means~\citep{huber1973robust, nemirovskij1983problem, catoni2012challenging, fan2021shrinkage}. More critically, Huberization, Winsorization and sample splitting introduce additional tuning parameters and these tuning paprameters should be variable-dependent that makes large-scale applications difficult and damages the fidelity of empirical results . Although, the quantile estimators~\citep{koenker2001quantile} such as the median and the Hodge-Lehmann (HL) estimator~\citep{Hodges1963} can eliminate the restriction on moment conditions and tuning parameters selection, the empirical median is often less efficient and requires stronger distribution assumptions. In addition, the existing literature on the HL estimator focuses mainly on low-dimensional asymptotic analysis and can not be applied to large-scale inferences.


In this paper, we revisit the celebrated HL estimator~\citep{Hodges1963} and conduct non-asymptotic and large-scale theoretical studies for both one-sample and two-sample problems. For one-sample location estimation in the univariate case, we let $X_{1}, \ldots, X_{n} \in \RR$ be independent and identically distributed (i.i.d.)~random variables with
\begin{align}
\label{x_univ}
    X_{i} = \theta + \xi_{i}, \enspace i = 1, \ldots, n,
\end{align}
where $\theta$ represents the location parameter of interest and $\xi_{1}, \ldots, \xi_{n}$ are i.i.d.~random variables drawn from some unknown distribution. In this scenario, the HL estimator of $\theta$ is defined by
\begin{align}
\label{eq_def_median_sum_one_sample}
    \hat{\theta} = \mathrm{median}\left\{\frac{X_{i} + X_{j}}{2} : 1\leq i < j\leq n\right\}. 
\end{align}
By assuming the pseudomedian of $\xi_{1}$ to be zero, we derive~\emph{non-asymptotic} Bahadur representation of $\hat\theta.$ To the best of our knowledge, this is the first study of its kind on the non-asymptotic expansion of the HL estimator. From there, we also establish the Berry-Essen bound and moderate deviation for $\hat\theta$ in the widest range. Furthermore, as there are multiple unknowns in the asymptotic distribution of $\hat\theta$, including the density function of $\xi_{1}$ and the unknown location parameter $\theta$, we then propose a weighted bootstrap approach to construct confidence intervals for $\theta$ based on data.  These results and methods are essential for the large-scale inference.

In addition to the study of one-sample problem, two-sample location shift problems arise frequently in many scientific studies, including choosing genes that are expressed differently in normal and injured spinal cord, determining the effects of treatment between treated and control groups, finding change points, etc.  
 To this end, we  let $Y_{1}, \ldots, Y_{m} \in \RR$ be another independent sample of i.i.d.~random variables satisfying 
\begin{align}\label{y_univ}
    Y_{j} = \theta^{\circ} + \varepsilon_{j}, \enspace j = 1, \ldots, m. 
\end{align}
The primary goal is to conduct statistical inference for $\Theta = \theta - \theta^{\circ}$.
 In the sequel, following~\citet{Hodges1963}, the two-sample HL  estimator for $\Theta$ is given by 
\begin{align}
\label{eq_HL _estimator_two}
    \hat{\Theta} = \mathrm{median}\{X_{i} - Y_{j} : i = 1, \ldots, n;\, j = 1, \ldots, m\}.
\end{align}
Instead of assuming the noises are generated from the same distribution \citep{Hodges1963}, we only require median$(\xi_{1} - \varepsilon_{1}) = 0$, which is more general and allows random noises to have different distributions. In a similar vein, we establish the non-asymptotic expansion of $\hat\Theta$, investigate its asymptotic distributions, and calculate the confidence interval via bootstrap techniques.  Again, the techniques and results developed can be applied to large-scale multiple testing problems.

There is a rich literature on large-scale multiple testing problems for location parameters~\citep{Benjamini1995, storey2002direct, storey2003positive, genovese2004stochastic, ferreira2006benjamini, chi2007performance, blanchard2009adaptive}. However, most of these works assume the noise distributions sub-Gaussian. Moving away for sub-Gaussian assumptions, \cite{Fan2019} propose estimating mean vector via minimizing the Huber type loss and perform the false discovery proportion (FDP) control. However, leveraging Huber type estimators necessitates moment limits and introduces tuning parameters, making it hard to be applied in large-scale inference, as the tuning parameters should ideally be variable-dependent. Additionally, while the HL estimator enjoys tuning-free and moment-free qualities in the univariate setting, its behavior in high dimensions is largely unknown. To this end, in this research, we further expand the HL estimator to high-dimensional regimes prompted by the lack of tuning free large-scale multiple testing problems for heavy-tailed distributions.


In specific, we assume $\bX_{i} = \btheta + \bxi_{i}$, $i \in [n]$ and $\bY_{j} = \btheta^\circ + \bvarepsilon_{j},j\in[m]$, where $\btheta = (\theta_{1}, \ldots, \theta_{p})^{\top}$ and $\btheta^\circ = (\theta_{1}^\circ, \ldots, \theta_{p}^\circ)^{\top}$ are~\emph{p}-dimensional vectors of unknown parameters and random noises $\bxi_{i}, i\in[n], \bvarepsilon_j, j\in[m]$. Let $\bTheta=\btheta-\btheta^\circ$. For both one- and two-sample problems, we propose a carefully constructed Gaussian multiplier bootstrap to test global null hypotheses
\begin{align}
\label{eq_high_dimensional_one_sample_testing}
    H_{0} : \theta_{\ell} \,\,\textrm{or}\,\,\Theta_\ell= 0 \mbox{ for all } \ell \in [p] \enspace \mathrm{versus} \enspace H_{1} : \theta_{\ell} \,\,\textrm{or}\,\,\Theta_\ell\neq 0\mbox{ for some } \ell \in [p],
\end{align}
by extending the HL estimator to high-dimensional regimes. When the null hypothesis above is rejected, we then perform multiple testing, allowing weakly dependent measurements, and efficiently control the FDP. Compared with existing literature~\citep{Liu2014, Fan2019}, our procedures do not involve any  tuning parameters and moment conditions for testing global null and large-scale multiple testing. These theoretical finds are further supported by  exhaustive numerical studies.

The main contributions of the paper can be summarized as follows:
\begin{itemize}
    \item The existing studies on the HL estimator mainly focus on its asymptotic behavior, which is too weak for high-dimensional applications. In practice, however, it is crucial to understand the HL estimator's performance under finite sample, especially in high-dimensional and large-scale experiments. For this purpose, we first derive the non-asymptotic expansions of the HL  estimators for both one-sample and two-sample problems. 

    \item With the non-asymptotic expansions of the HL estimators, for both one- and two-sample problems, we derive its Berry-Essen type bounds and Cram\'{e}r type moderate deviations, with the widest range. To deal with unknown components in the distribution, we further develop the weighted bootstrap to build data-driven confidence intervals. In addition, we also furnish the non-asymptotic analysis of the bootstrap estimator.
    
    \item Existing work on large-scale testing with heavy-tail errors typically involves additional tuning parameters and the moment conditions. In order to address these issues, we generalize the HL estimator to large-scale studies and propose tuning-free and moment-free high-dimensional testing procedures. Additionally, we develop bootstrap methods for calculating critical values for large-scale applications.  We show that the resulting false discovery proportion is well controlled.
\end{itemize}

\subsection{Roadmap}
In~\S\ref{sec:nonasym}, we first set up the model and introduce basic settings. We then derive both one-sample and two-sample non-asymptotic expansions of the HL estimator, its Berry-Esseen bound and moderate deviations. In addition, as the asymptotic distribution of the estimator involves unknown quantities, in~\S\ref{bootstrap}, we conduct multiplier bootstrap to construct valid data-driven confidence intervals. Moreover, \S\ref{large-scale-one} is devoted to extending the HL estimator to large-scale multiple testing problems. \S\ref{sec:num} contains comprehensive numerical studies to verify theoretical
results. Finally we conclude the paper with some discussions in~\S\ref{sec:conc}. All the proofs are deferred to the appendix.

\subsection{Notation}
For any integer $m$, we use $[m]$ to denote the set $[m] = \{1, 2, \ldots, m\}$. For any function $h : \mathbb{R} \to \mathbb{R}$, we denote $\|h\|_{\infty} = \sup_{z \in \RR}|h(z)|$. Throughout this paper, we use $C, C_{1}, C_{2}, \ldots$ to denote universal positive constants whose values may vary at different places. We use $\mathbb{I}\{\cdot\}$ to denote the indicator function. For any set $A$, we use $|A|$ to denote its cardinality. For two positive sequences $\{a_n\}_{n\ge 1}$ and $\{b_n\}_{n\ge 1}$, we write $a_n=\cO(b_n)$ or $a_n\lesssim b_n$ if there exists a positive constant $C$ such that $a_n\le C\cdot b_n$ and we write $a_n=o(b_n)$ if $a_n/b_n\rightarrow 0$. In addition, we define the pseudomedian of a distribution $F$ to be the median of the distribution of $(Z_1+Z_2)/2,$ where $Z_1$ and  $Z_2$ are independent, each with the same distribution $F$. Moreover, for any distribution $F$ and constant $c$, we let $c\cdot F$ represent the distribution of the random variable $c\cdot X$, where $X$ is the random variable drawn from $F.$



\section{Estimation and Inference}\label{sec:nonasym}
This section is devoted to studying the non-asymptotic expansions of the Hodges-Lehmann estimator and conducting statistical estimation and inference for population location shift parameters. For both one-sample and two-sample problems, the theoretical properties, which are needed for large-scale inferences, are presented in the following sections.

\subsection{One-sample Problem}
\label{framework}
Let $X_{i} = \theta + \xi_{i}$, $i \in [n]$, be i.i.d.~real-valued random variables, where $\theta \in \RR$ is the unknown location parameter of interest and $\xi_{1}, \ldots, \xi_{n}$ are i.i.d.~random variables drawn from some unknown distribution. It is assumed that $\xi_{1}$ has a pseudomedian~\citep{Hoyland1965} of zero, throughout this section. As a consequence, letting $U(t) = \mathbb{P}\{(X_{1} + X_{2})/2 \leq t\}$, it holds that $\theta = \inf\{t \in \mathbb{R} : U(t) \geq 1/2\}$. The HL estimator~\citep{Hodges1963} of $\theta$ is given by the median of all~\emph{Walsh averages} of the observations $X_{1}, \ldots, X_{n}$, namely, 
\begin{align}
\label{eq_def_median_sum_one_sample}
    \hat{\theta} = \mathrm{median}\{(X_{i} + X_{j})/2 : i \neq j \in [n]\}. 
\end{align}
Equivalently, if we define the~\emph{U}-process $U_{n}(t) = \{n(n - 1)\}^{-1} \sum_{i \neq j \in [n]} \mathbb{I}\{(X_{i} + X_{j})/2 \leq t\}$, the HL estimator $\hat{\theta}$ in~\eqref{eq_def_median_sum_one_sample} can also be expressed as the sample median of the process $U_{n}(t)$, namely,
\begin{align}
\label{eq_def_mu_hat_quantile}
    \hat{\theta} = \inf\{t \in \RR : U_{n}(t) \geq 1/2\}. 
\end{align}
Let $F(t) = \mathbb{P}(\xi_{1} \leq t)$ denote the cumulative distribution function of $\xi_{1}$ and $f(t) = F'(t)$ be its density function. We then present the non-asymptotic Bahadur representation of $\hat\theta$ in the following theorem.

\begin{theorem}
\label{Theorem_quantile_consistency_one}
Assume that there exist positive constants $c_{0}$ and $\kappa_{0}$ such that $\inf_{|\delta| \leq c_{0}} U'(\theta + \delta) \geq \kappa_{0}$. Then for any $z > 0$, we have 
\begin{align}
\label{eq_consistency_mu}
    \PP(|\hat{\theta} - \theta| > z) \leq 2 \exp\{-n\kappa_{0}^{2}(z\wedge c_{0})^{2}\}.
\end{align}
Furthermore, assume that $\sup_{z \in \RR} |f(z)| < \infty$ and there exist positive constants $c_{1}$ and $\kappa_{1}$ such that $\sup_{|\delta| \leq c_{1}} |U''(\theta + \delta)| \leq \kappa_{1}$. Then for any $z > 0$ such that $z = o(n)$, we have 
\begin{align}
\label{eq_Hoeffding_decomposition_Bahadur_representation}
    \mathbb{P}\bigg\{\bigg|\hat{\theta} - \theta - \frac{2}{n U'(\theta)}\sum_{i = 1}^{n}\bigg\{\frac{1}{2} - F(-\xi_{i})\bigg\}\bigg| > \frac{C_{1}(z \vee 1)}{n}\bigg\} \leq C_{2} \exp(-z), 
\end{align}
where $C_{1}, C_{2}$ are positive constants depending only on $c_{0}, \kappa_{0}, c_{1}, \kappa_{1}$ and $\|f\|_{\infty}$. 
\end{theorem}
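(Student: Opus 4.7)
The argument splits into two stages: first I would establish the concentration bound~\eqref{eq_consistency_mu}, and then use it as the key input to the non-asymptotic Bahadur representation~\eqref{eq_Hoeffding_decomposition_Bahadur_representation}.

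For the concentration, the quantile representation~\eqref{eq_def_mu_hat_quantile} yields the inclusions $\{\hat\theta > \theta + z\} \subseteq \{U_n(\theta+z) < 1/2\}$ and $\{\hat\theta < \theta - z\} \subseteq \{U_n(\theta - z) \geq 1/2\}$. For $0 < z \leq c_0$, the hypothesis $\inf_{|\delta|\leq c_0} U'(\theta+\delta) \geq \kappa_0$ combined with $U(\theta) = 1/2$ gives $U(\theta+z) \geq 1/2 + \kappa_0 z$, so
\begin{align*}
\PP(\hat\theta > \theta + z) \leq \PP\bigl(U_n(\theta+z) - U(\theta+z) \leq -\kappa_0 z\bigr).
\end{align*}
Since $U_n(\theta+z)$ is a U-statistic of order two with kernel valued in $[0,1]$, Hoeffding's inequality for U-statistics produces the factor $\exp(-n\kappa_0^2 z^2)$. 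A symmetric argument handles the left tail, and the case $z > c_0$ reduces to the boundary case $z = c_0$ by monotonicity.

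For the Bahadur representation, note that $U_n$ has jumps of size at most $2/\{n(n-1)\}$, so the defining relation $U_n(\hat\theta^-) \leq 1/2 \leq U_n(\hat\theta)$ gives $|U_n(\hat\theta) - 1/2| \leq 2/\{n(n-1)\}$. On the event $|\hat\theta - \theta| \leq c_0 \wedge c_1$ (which occurs with the required probability by Stage 1), a second-order Taylor expansion of $U$ around $\theta$ followed by rearrangement yields
\begin{align*}
U'(\theta)(\hat\theta - \theta) = -\bigl\{U_n(\theta) - 1/2\bigr\} - \mathrm{Osc}_n + O\bigl(\kappa_1(\hat\theta-\theta)^2\bigr) + O(n^{-2}),
\end{align*}
where $\mathrm{Osc}_n := [U_n(\hat\theta) - U(\hat\theta)] - [U_n(\theta) - U(\theta)]$. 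A Hoeffding decomposition then splits $U_n(\theta) - U(\theta) = L_n(\theta) + W_n(\theta)$, with the linear Hajek projection $L_n(\theta) = (2/n)\sum_{i=1}^n\{F(-\xi_i) - 1/2\}$ recovering, after division by $U'(\theta)$, the stated leading term, and $W_n(\theta)$ being a completely degenerate bounded U-statistic of order two.

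It then suffices to control three remainders at scale $(z \vee 1)/n$ with probability at least $1 - Ce^{-z}$. First, $(\hat\theta - \theta)^2 \lesssim z/n$ is immediate from Stage 1 taken with deviation radius $\delta_n := C(z/n)^{1/2}$. Second, $|W_n(\theta)| \lesssim z/n$ follows from a Bernstein-type concentration inequality for completely degenerate bounded U-statistics of order two (e.g.\ Arcones--Gin\'e or Adamczak). Third, and most delicate, $|\mathrm{Osc}_n| \lesssim z/n$ on $\{|\hat\theta - \theta| \leq \delta_n\}$: decompose the increment kernel $\mathbb{I}\{(X_1+X_2)/2 \leq t\} - \mathbb{I}\{(X_1+X_2)/2 \leq \theta\}$ in the manner of Hoeffding; the centered linear part $(2/n)\sum_i\{F(2(t-\theta)-\xi_i) - F(-\xi_i) - \EE[\cdot]\}$ has summand variance $O(|t-\theta|^2)$ and bounded increments of order $O(|t-\theta|)$ (by $\|f\|_\infty$-Lipschitzness of $F$), so Bernstein combined with a peeling argument over $|t-\theta|\leq \delta_n$ produces oscillation of order $\lesssim z/n$; the degenerate part of the increment kernel has variance $O(\delta_n)$ and contributes a strictly lower-order term via U-statistic Bernstein.

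The principal obstacle is the third step: achieving the sharp rate $z/n$ for the oscillation, rather than the classical median-type rate $(z/n)^{3/4}$, hinges on exploiting the Lipschitz smoothness of the Hajek projection in $t$ together with a careful peeling over $z$-dependent radii. Combining the three pieces and absorbing the $O(n^{-2})$ jump error into the leading remainder delivers~\eqref{eq_Hoeffding_decomposition_Bahadur_representation}.
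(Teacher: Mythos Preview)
Your proposal is correct and follows essentially the same route as the paper: the concentration step is identical (quantile inclusion plus Hoeffding for U-statistics), and the Bahadur step proceeds via Taylor expansion, Hoeffding decomposition of $U_n(\theta)-1/2$ into H\'ajek projection plus degenerate part (controlled by a Gin\'e--Latala--Zinn-type exponential inequality), and an oscillation bound obtained by decomposing the increment kernel the same way and chaining/peeling the Lipschitz linear piece over the $\sqrt{z/n}$-radius ball. The paper packages the oscillation step as a separate lemma using dyadic discretization and Hoeffding's inequality on each dyadic increment, whereas you invoke Bernstein with peeling, but these are the same argument in slightly different dress.
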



We note that existing works mainly study the asymptotic distribution of quantiles of~\emph{U}-statistics instead of non-asymptotic ones~\citep{Arcones1996}. Asymptotic theory, however, is frequently less effective for theoretical studies in  high-dimensional statistics~\citep{wainwright2019high}. To fill in the blank, in Theorem~\ref{Theorem_quantile_consistency_one}, we present both the non-asymptotic deviation bound and linear approximation of the HL estimator $\hat\theta$. It is worth mentioning that the HL estimator $\hat{\theta}$ has sub-Gaussian tails without any moment constraints imposed on the noise $\xi_{1}$, whereas the Huber-type or winsorized estimator requires the existence of the second moment. Moreover, in contrast to Huber regression~\citep{Zhou2018, sun2020adaptive} or truncation~\citep{fan2021shrinkage}, which both require additional tuning parameters, HL-type estimation is tuning-free and thus more scalable.


Moreover, when the distribution of $\xi_{1}$ is symmetric around zero, $\theta$ reduces to the median of the distribution of $X$. In this scenario, the sample median $\hat{\theta}_{\mathrm{med}} = \mathrm{median}\{X_{1}, \ldots, X_{n}\}$ serves as a plausible alternative robust estimator for $\theta$. Under similar regularity conditions on the density function $f(t)$, the classical Bahadur representation for $\hat{\theta}_{\mathrm{med}}$ reveals that
\begin{align}
\label{eq_Bahadur_representation_one_sample_median}
    \PP\bigg\{\bigg|\hat{\theta}_{\mathrm{med}} - \theta - \frac{1}{n f(0)} \sum_{i = 1}^{n}\bigg(\frac{1}{2} - \mathbb{I}\{\xi_{i} \leq 0\}\bigg)\bigg| > \frac{C\log n}{n^{3/4}}\bigg\} \leq C n^{-c}
\end{align}
for any constant $c > 0$.
Compared with~\eqref{eq_Hoeffding_decomposition_Bahadur_representation}~(by taking $z = \cO(\log n)$), the linear approximation of HL estimator is much more accurate than that of the quantile estimator~\citep{Arcones1996}.

\subsubsection{Asymptotic Distribution}
In addition to estimation, statistical inference is also essential in real-world applications. To this end, with the developed non-asymptotic expansion at hand, we next present the asymptotic distribution of the HL estimator $\hat\theta$ in this section.

Let $\Phi(\cdot)$ denote the cumulative distribution function of standard normal random variable. The following theorem establishes a Berry-Esseen theorem for $\hat{\theta}$.

\begin{theorem}
\label{Theorem_Berry-Esseen_one}
Under the conditions of Theorem~\ref{Theorem_quantile_consistency_one}, we have 
\begin{align*}
    \sup_{z \in \RR}\bigg|\PP\bigg\{\frac{\sqrt{n}(\hat{\theta} - \theta)}{\sigma_{\theta}} \leq z\bigg\} - \Phi(z)\bigg| \leq \frac{C\log n}{\sqrt{n}},  
\end{align*}
where $C < \infty$ is a positive constant independent of $n$ and 
\begin{align}
\label{eq_sigma_variance_one}
    \sigma_{\theta}^{2} = \frac{4\Var\{F(-\xi_{1})\}}{\{U'(\theta)\}^{2}}.
\end{align} 
\end{theorem}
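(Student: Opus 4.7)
The plan is to leverage the non-asymptotic Bahadur representation from Theorem~\ref{Theorem_quantile_consistency_one} to reduce the problem to a central limit theorem for an i.i.d. sum of bounded random variables, plus control of the remainder. Concretely, write
\begin{align*}
\frac{\sqrt{n}(\hat\theta-\theta)}{\sigma_\theta} \;=\; \frac{1}{\sigma_\theta\sqrt{n}}\cdot\frac{2}{U'(\theta)}\sum_{i=1}^n\Bigl\{\tfrac12-F(-\xi_i)\Bigr\} \;+\; \frac{\sqrt{n}}{\sigma_\theta}\,R_n,
\end{align*}
where $R_n$ denotes the Bahadur remainder. The pseudomedian-zero assumption on $\xi_1$ gives $\PP(\xi_1+\xi_2\leq 0)=1/2$, hence $\EE[F(-\xi_1)]=1/2$, so each summand in the linear term is centered. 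Its variance per summand is $4\Var\{F(-\xi_1)\}/\{U'(\theta)\}^2 = \sigma_\theta^2$, which explains the normalization.

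For the linear term, note that $|1/2-F(-\xi_i)|\leq 1/2$ almost surely, so the random variables $W_i := 2\{1/2-F(-\xi_i)\}/U'(\theta)$ are uniformly bounded; in particular $\EE|W_1|^3<\infty$. The classical Berry--Esseen theorem for i.i.d.~sums then yields
\begin{align*}
\sup_{z\in\RR}\Bigl|\PP\Bigl\{\tfrac{1}{\sigma_\theta\sqrt{n}}\sum_{i=1}^n W_i\leq z\Bigr\}-\Phi(z)\Bigr|\leq \frac{C\,\EE|W_1|^3}{\sigma_\theta^3\sqrt{n}}\leq \frac{C'}{\sqrt{n}}.
\end{align*}

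To handle the remainder, apply~\eqref{eq_Hoeffding_decomposition_Bahadur_representation} in Theorem~\ref{Theorem_quantile_consistency_one} with the choice $z=c_3\log n$ for a sufficiently large constant $c_3$; this yields $|R_n|\leq C_1 c_3\log n/n$ on an event of probability at least $1-C_2 n^{-c_3}$. Setting $\varepsilon_n := (C_1 c_3/\sigma_\theta)\cdot\log n/\sqrt{n}$, a standard sandwich argument then gives, for every $z\in\RR$,
\begin{align*}
\PP\Bigl\{\tfrac{\sqrt{n}(\hat\theta-\theta)}{\sigma_\theta}\leq z\Bigr\} &\leq \PP\Bigl\{\tfrac{1}{\sigma_\theta\sqrt{n}}\!\sum W_i\leq z+\varepsilon_n\Bigr\}+\PP\bigl(\tfrac{\sqrt{n}}{\sigma_\theta}|R_n|>\varepsilon_n\bigr)\\
&\leq \Phi(z+\varepsilon_n)+\tfrac{C'}{\sqrt{n}}+C_2 n^{-c_3},
\end{align*}
and analogously for the lower bound. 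Since $\Phi$ has density bounded by $(2\pi)^{-1/2}$, $|\Phi(z+\varepsilon_n)-\Phi(z)|\leq \varepsilon_n/\sqrt{2\pi}$, which is of order $\log n/\sqrt{n}$; choosing $c_3$ large so that $n^{-c_3}\lesssim 1/\sqrt{n}$ absorbs the tail probability, we obtain the claimed Berry--Esseen bound. The main delicate point in this program is verifying the applicability of Theorem~\ref{Theorem_quantile_consistency_one} at the deviation level $z\asymp\log n$, which is permitted since $\log n = o(n)$, and ensuring that the constants in that theorem do not degrade the final rate.
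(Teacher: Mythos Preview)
Your proposal is correct and follows essentially the same approach as the paper's proof: decompose $T_n=\sqrt{n}(\hat\theta-\theta)/\sigma_\theta$ into the linear term $T_n^\sharp$ plus a remainder, apply the classical Berry--Esseen theorem to $T_n^\sharp$ using boundedness of $1/2-F(-\xi_i)$, invoke Theorem~\ref{Theorem_quantile_consistency_one} at level $z\asymp\log n$ to control the remainder with probability at least $1-O(n^{-1/2})$, and finish with the sandwich/Lipschitz argument. Your write-up is in fact slightly more explicit than the paper's, e.g.\ in justifying the centering $\EE[F(-\xi_1)]=1/2$ from the pseudomedian-zero assumption.
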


Theorem~\ref{Theorem_Berry-Esseen_one} establishes the asymptotic normality of $\hat\theta$. When the distribution of $\xi_{1}$ is symmetric around zero, the asymptotic variance above reduces to $\sigma_{\theta}^{2} = 1/[3\{U'(\theta)\}^{2}]$. Consequently, in view of~\eqref{eq_Bahadur_representation_one_sample_median} and ~\eqref{eq_sigma_variance_one}, the asymptotic relative efficiency (ARE) between the HL estimator $\hat{\theta}$ and the sample median $\hat{\theta}_{\mathrm{med}}$ is ${3\{U'(\theta)\}^{2}}/{4\{f(0)\}^{2}}$ \citep{Hodges1963}. A concrete example is given in Table~\ref{table_RE_one_sample}, where we summarize the ARE between $\hat{\theta}$ and $\hat{\theta}_{\mathrm{med}}$ for $\xi_{1} \sim t_{\nu}$.
\begin{table}[h]
    \centering
    \begin{tabular}{c|cccccc}\hline
    $\nu$  & $1$ & $2$ & $4$ & $8$ & $16$ & $\infty$ \\ \hline
    ARE     & $0.75$ & $1.04$ & $1.25$ & $1.37$ & $1.43$ & $1.50$ \\ \hline
    \end{tabular}
    \caption{Asymptotic relative efficiency between the HL estimator and the sample median}
    \label{table_RE_one_sample}
\end{table}
In particular, when $\nu\ge 2$, the HL estimator has a strictly smaller asymptotic variance than the sample median. The above example illustrates the effectiveness of the HL estimator over the quantile regression method.

Based on the non-asymptotic linear expansion in~\eqref{eq_Hoeffding_decomposition_Bahadur_representation}, we further derive the Cram\'{e}r-type moderate deviation to quantify the relative error of the normal approximation for $\hat\theta$ in the following theorem, which has important applications to large-scale inference~\citep{Fan2007, Liu2014, Xia2018, Zhou2018}.


\begin{theorem}
\label{Theorem_Cramer_one}
Let $\{\delta_{n}\}_{n \geq 1}$ be a sequence of positive numbers satisfying $\sqrt{n}\delta_{n} \to \infty$. Then, under the conditions of Theorem~\ref{Theorem_quantile_consistency_one}, we have
\begin{align}
\label{eq_moderate_deviation_mu_one_sample}
    \bigg|\frac{\PP\{\sqrt{n}(\hat{\theta} - \theta)/\sigma_{\theta} > z\}}{1 - \Phi(z)} - 1\bigg| \leq C\bigg\{\frac{1 + z^{3}}{\sqrt{n}} + (1 + z)\delta_{n} + 2(z\vee 1) \sqrt{2\pi} \exp\bigg(\frac{z^{2}}{2} - \sqrt{n}\delta_{n}\bigg)\bigg\},
\end{align}
uniformly for $0 < z \leq o(\delta_{n}^{-1} \wedge n^{1/4}\sqrt{\delta_{n}})$, where $C < \infty$ is a positive constant independent of $z$ and $n$. In particular, when $\delta_{n} \asymp n^{-1/6}$, we have 
\begin{align*}
    \bigg|\frac{\PP\{\sqrt{n}(\hat{\theta} - \theta)/\sigma_{\theta} > z\}}{1 - \Phi(z)} - 1\bigg| \to 0,
\end{align*}
uniformly for $0 < z \leq o(n^{1/6})$. 
\end{theorem}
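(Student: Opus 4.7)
The plan is to reduce the theorem to a classical Cram\'{e}r-type moderate deviation for i.i.d.\ bounded sums by inserting the Bahadur representation from Theorem~\ref{Theorem_quantile_consistency_one}. Define the linearized statistic
\begin{equation*}
T_{n}^{\sharp} \;=\; \frac{1}{\sqrt{n\Var\{F(-\xi_{1})\}}} \sum_{i=1}^{n} \left\{\frac{1}{2} - F(-\xi_{i})\right\},
\end{equation*}
so that $\sqrt{n}(\hat\theta - \theta)/\sigma_{\theta} = T_{n}^{\sharp} + \Delta_{n}$ for a remainder $\Delta_{n}$. Taking the deviation level $z = \sqrt{n}\delta_{n}$ in~\eqref{eq_Hoeffding_decomposition_Bahadur_representation} (admissible since $\sqrt{n}\delta_{n}\to\infty$ and $\sqrt{n}\delta_{n}=o(n)$) and rescaling by $\sqrt{n}/\sigma_{\theta}$, I obtain
\begin{equation*}
\PP\bigl(|\Delta_{n}| > C\delta_{n}\bigr) \;\leq\; C\exp(-\sqrt{n}\delta_{n}).
\end{equation*}
Since each $1/2 - F(-\xi_{i})$ is centered and bounded by $1/2$, the statistic $T_{n}^{\sharp}$ is a normalized sum of i.i.d.\ bounded random variables, so a classical Cram\'{e}r-type moderate deviation (via the Cram\'{e}r series tilting argument for sums with finite exponential moment) yields
\begin{equation*}
\frac{\PP(T_{n}^{\sharp} > y)}{1 - \Phi(y)} \;=\; 1 + \cO\!\left(\frac{1+y^{3}}{\sqrt{n}}\right)
\end{equation*}
uniformly for $0 < y = o(n^{1/6})$.

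Next I would sandwich $T_n$ by $T_n^\sharp$: on $\{|\Delta_n|\leq C\delta_n\}$ one has $\{T_n^\sharp > z+C\delta_n\}\subseteq\{T_n>z\}\subseteq\{T_n^\sharp > z-C\delta_n\}$, so
\begin{equation*}
\PP(T_n^\sharp > z+C\delta_n) - Ce^{-\sqrt{n}\delta_n} \;\leq\; \PP(T_n > z) \;\leq\; \PP(T_n^\sharp > z-C\delta_n) + Ce^{-\sqrt{n}\delta_n}.
\end{equation*}
Comparing each side with $1-\Phi(z)$ splits naturally into three error sources. The Cram\'{e}r approximation at the shifted levels gives the first term $(1+z^3)/\sqrt{n}$. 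A mean-value expansion $1-\Phi(z\mp C\delta_n) = (1-\Phi(z)) + \cO(\delta_n\,\phi(\tilde z))$ together with the standard Mills bound $\phi(z)/(1-\Phi(z))\lesssim 1+z$ converts the location shift into the second term $(1+z)\delta_n$, valid whenever $z\delta_n$ stays bounded. Finally, the Gaussian tail lower bound $1-\Phi(z)\gtrsim (z\vee 1)^{-1}\exp(-z^2/2)$ transforms the Bahadur tail into
\begin{equation*}
\frac{Ce^{-\sqrt{n}\delta_n}}{1-\Phi(z)} \;\lesssim\; (z\vee 1)\sqrt{2\pi}\exp\!\left(\frac{z^2}{2} - \sqrt{n}\delta_n\right),
\end{equation*}
matching the third term of~\eqref{eq_moderate_deviation_mu_one_sample}. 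Assembling these estimates on both sandwich bounds completes the proof, and the specified range $z = o(\delta_n^{-1}\wedge n^{1/4}\sqrt{\delta_n})$ is exactly what keeps $(1+z)\delta_n\to 0$ and $z^2/2 < \sqrt{n}\delta_n$ so the comparisons are nondegenerate.

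The main obstacle is the balance between the Bahadur remainder tail $\exp(-\sqrt{n}\delta_n)$ and the Gaussian survival function at moderate deviation scales: to keep the exponential error negligible relative to $1-\Phi(z)\asymp z^{-1}e^{-z^2/2}$ one needs $z^2\ll \sqrt{n}\delta_n$, while to keep the location-shift error small one needs $z\delta_n \ll 1$; the optimal choice $\delta_n\asymp n^{-1/6}$ equalizes the two constraints and recovers the familiar $z = o(n^{1/6})$ range, which is what the final claim asserts. A secondary, but purely bookkeeping, difficulty is tracking the multiplicative Cram\'{e}r error $(1+y^3)/\sqrt{n}$ through the Mills-ratio comparison in both upper and lower sandwich bounds so as to land precisely on the form stated in~\eqref{eq_moderate_deviation_mu_one_sample}.
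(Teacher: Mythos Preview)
Your proposal is correct and follows essentially the same approach as the paper: linearize via the Bahadur representation of Theorem~\ref{Theorem_quantile_consistency_one} (choosing the deviation level so that the remainder tail is $\exp(-\sqrt{n}\delta_n)$), apply the classical Cram\'er moderate deviation to the bounded i.i.d.\ sum $T_n^\sharp$, sandwich $T_n$ between $T_n^\sharp$ evaluated at $z\pm C\delta_n$, and compare $\bar\Phi(z\pm C\delta_n)$ with $\bar\Phi(z)$ via Mills-ratio bounds. The only cosmetic difference is that the paper writes the $\bar\Phi$ comparison as the explicit inequality $\bar\Phi(z-\delta_n)\leq\{1+(1+z)\delta_n e^{z\delta_n}\}\bar\Phi(z)$ rather than your mean-value formulation.
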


It is worth mentioning that taking $\delta_{n} \asymp n^{-1/6}$ yields the wideest possible range $0 < z \leq o(n^{1/6})$ for the relative error in~\eqref{eq_moderate_deviation_mu_one_sample} to vanish, which is also optimal for the Cram\'{e}r-type moderate deviation results~\citep{Petrov1975, Fan2007, Liu2014, Zhou2018, Fan2019, Chen2020, Fang2020}. Next, we proceed to estimate the location shift parameter between two distributions via the HL estimator.



\subsection{Two-sample Problem}
A variety of applications use two-sample location shift estimation and inference, such as testing gene differences, quantifying treatment effects, and detecting change points. Accordingly, this section examines the two-sample estimation and inference of the population location shift parameter.

Let $Y_{j} = \theta^{\circ} + \varepsilon_{j}$, $j \in [m]$, be another sample of i.i.d.~real-valued random variables independent of $\{X_{1}, \ldots, X_{n}\}$, and we aim at constructing confidence interval for $\Theta = \theta - \theta^{\circ}$. Throughout this section, it is assumed that
\begin{align}
\label{ass:two}
    \Theta =  \inf\{t \in \mathbb{R} : \mathcal{U}(t) \geq 1/2\}, \qquad \cU(t) = \mathbb{P}(X_{1} - Y_{1} \leq t).
\end{align}
The existing literature on HL estimators mainly deals with the case where $\xi_{1}$ and $\varepsilon_{1}$ are identically distributed~\citep{Hodges1963, Lehmann1963, bauer1972constructing, rosenkranz2010note}. In contrast, it should be noted that the assumption imposed in~\eqref{ass:two} is satisfied as long as median$(\varepsilon_{1} - \xi_{1}) = 0,$ which is much more general than the identical distribution.  
In the sequel, following~\citet{Hodges1963}, the two sample HL estimator for $\Theta$ is given by 
\begin{align}
\label{eq_HL_estimator_two}
    \hat{\Theta} = \mathrm{median}\{X_{i} - Y_{j} : i \in [n], j \in [m]\}. 
\end{align}
Before proceeding, we present the following assumption on the relative sample sizes of the involved random samples. 
\begin{assumption}
\label{Assumption_two_sample_size}
There exists a positive constant $\bar{\eta} < 1$ such that $\bar{\eta} \leq (n/m) \leq 1/\bar{\eta}$. 
\end{assumption}

Assumption~\ref{Assumption_two_sample_size} is a natural condition which ensures the sample sizes to be comparable. Such a requirement is commonly imposed for two sample estimation and inference~\citep{Bai1996, Chen2010, Li2012, Chang2017, Zhang2020}. In what follows, we write $N = nm/(n + m)$ for simplicity. The sub-Gaussian-type deviation inequality and the non-asymptotic Bahadur representation of the two-sample HL estimator $\hat\Theta$ are established in the subsequent theorem.

\begin{theorem}
\label{Theorem_quantile_consistency_two}
Assume that there exist positive constants $\bar{c}_{0}$ and $\bar{\kappa}_{0}$ such that $\inf_{|\delta| \leq \bar{c}_{0}} \cU'(\Theta + \delta) \geq \bar{\kappa}_{0} > 0$. Then for any $z > 0$, we have 
\begin{align*}
    \PP(|\hat{\Theta} - \Theta| > z) \leq 4 \exp\{-2(n \wedge m)\bar{\kappa}_{0}^{2}(z\wedge \bar{c}_{0})^{2}\}.
\end{align*}
Furthermore, assume that $\sup_{t \in \mathbb{R}} |\mathcal{U}'(t)| < \infty$ and there exist positive constants $\bar{c}_{1}$ and $\bar{\kappa}_{1}$ such that $\sup_{|\delta| \leq \bar{c}_{1}} |\cU''(\Theta + \delta)| \leq \bar{\kappa}_{1}$. Then, under Assumption~\ref{Assumption_two_sample_size}, for any $0 < z = o(N)$, we have 
\begin{align*}
    \mathbb{P}\bigg\{\bigg|\hat{\Theta} - \Theta - \frac{1}{\mathcal{U}'(\Theta)} \bigg\{\frac{1}{n}\sum_{i = 1}^{n} G(\xi_{i}) - \frac{1}{m} \sum_{j = 1}^{m} F(\varepsilon_{j})\bigg\}\bigg| > \frac{C_{1} (z \vee 1)}{N}\bigg\} \leq C_{2} \exp(-z), 
\end{align*}
where $G(t) = \mathbb{P}(\varepsilon_{1} \leq t)$ stands for the cumulative distribution function of $\varepsilon_{1}$ and $C_{1}, C_{2} < \infty$ are positive constants depending only on $\bar{c}_{0}, \bar{\kappa}_{0}, \bar{c}_{1}, \bar{\kappa}_{1}, \bar{\eta}$ and $\|\mathcal{U}'\|_{\infty}$.  
\end{theorem}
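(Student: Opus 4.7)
The plan is to mirror the strategy behind Theorem~\ref{Theorem_quantile_consistency_one}, replacing the one-sample $U$-process by the two-sample analogue $\mathcal{U}_{n,m}(t) = (nm)^{-1}\sum_{i=1}^n\sum_{j=1}^m \mathbb{I}\{X_i - Y_j \leq t\}$, which satisfies $\hat{\Theta} = \inf\{t \in \RR : \mathcal{U}_{n,m}(t) \geq 1/2\}$. For the deviation bound, by monotonicity one has $\PP(\hat{\Theta} > \Theta + z) \leq \PP(\mathcal{U}_{n,m}(\Theta + z) < 1/2)$. The lower bound on $\mathcal{U}'$ gives $\mathcal{U}(\Theta + z) - 1/2 \geq \bar{\kappa}_0 (z \wedge \bar{c}_0)$, so it suffices to bound
\[
\PP\bigl(|\mathcal{U}_{n,m}(\Theta + z) - \mathcal{U}(\Theta + z)| > \bar{\kappa}_0 (z \wedge \bar{c}_0)\bigr).
\]
Since $\mathcal{U}_{n,m}$ is a two-sample $U$-statistic of order $(1,1)$ with bounded kernel, I would apply Hoeffding's inequality for two-sample $U$-statistics, which produces sub-Gaussian concentration of rate $n \wedge m$; combining with the symmetric lower-tail argument yields the stated factor $4\exp\{-2(n \wedge m)\bar{\kappa}_0^2 (z \wedge \bar{c}_0)^2\}$.

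For the Bahadur representation, I would begin from the near identity $\mathcal{U}_{n,m}(\hat{\Theta}) = 1/2 + \cO(1/(nm))$ and a Taylor expansion of $\mathcal{U}$ around $\Theta$ to obtain
\[
\hat{\Theta} - \Theta = \frac{1/2 - \mathcal{U}_{n,m}(\Theta)}{\mathcal{U}'(\Theta)} - \frac{W_{n,m}(\hat{\Theta} - \Theta) - W_{n,m}(0)}{\mathcal{U}'(\Theta)} + \cO\bigl(|\hat{\Theta} - \Theta|^2\bigr),
\]
where $W_{n,m}(t) = \mathcal{U}_{n,m}(\Theta + t) - \mathcal{U}(\Theta + t)$; the quadratic term is controlled by the deviation bound just proved. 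Next I would resolve the linear term via the two-sample Hoeffding decomposition applied to the kernel $h(x,y) = \mathbb{I}\{x - y \leq \Theta\} - 1/2$. A direct computation with $X_1 - Y_1 = \Theta + \xi_1 - \varepsilon_1$ gives the Hajek projections $\E[h(X_i, Y_1)\mid X_i] = 1/2 - G(\xi_i)$ and $\E[h(X_1, Y_j)\mid Y_j] = F(\varepsilon_j) - 1/2$, so that
\[
1/2 - \mathcal{U}_{n,m}(\Theta) = \frac{1}{n}\sum_{i=1}^n G(\xi_i) - \frac{1}{m}\sum_{j=1}^m F(\varepsilon_j) - R_{n,m},
\]
where the two $1/2$ constants cancel because the pseudomedian hypothesis on $\xi_1 - \varepsilon_1$ forces $\E G(\xi_1) = \E F(\varepsilon_1) = 1/2$, and $R_{n,m}$ is the completely degenerate bilinear remainder of the Hoeffding decomposition.

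What remains is to bound $R_{n,m}$ and the oscillation $\sup_{|t| \leq \Lambda}|W_{n,m}(t) - W_{n,m}(0)|$ sharply at rate $\cO((z \vee 1)/N)$. For $R_{n,m}$, I would invoke a Bernstein-type inequality for completely degenerate two-sample $U$-statistics (e.g.\ of Arcones--Gin\'e type) to obtain $|R_{n,m}| \lesssim z/N$ on an event of probability $1 - \cO(e^{-z})$. For the oscillation, after localising via the deviation bound to $|\hat{\Theta} - \Theta| \leq \Lambda$ with $\Lambda \asymp \sqrt{z/N}$, I would use the Lipschitz bound $|\mathcal{U}(\Theta + t) - \mathcal{U}(\Theta)| \leq \|\mathcal{U}'\|_\infty |t|$ and a chaining argument, namely the two-sample analogue of the oscillation lemma used in Theorem~\ref{Theorem_quantile_consistency_one}, yielding control of the same order $\|\mathcal{U}'\|_\infty (z + 1)/(N\bar{\kappa}_0)$. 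Assumption~\ref{Assumption_two_sample_size} enters crucially here to ensure $n\wedge m \asymp N$ so the two samples contribute symmetrically. The main obstacle will be this last step: adapting the one-sample peeling and symmetrisation arguments to the bilinear structure of two-sample $U$-processes in a way that remains sharp uniformly over a shrinking neighbourhood of $\Theta$, since naive symmetrisation over both samples introduces factors that would be too large for the claimed $1/N$ rate; I expect that decoupling inequalities combined with a conditional chaining (first conditional on $\{X_i\}$, then on $\{Y_j\}$) will suffice.
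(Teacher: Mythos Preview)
Your proposal is correct and matches the paper's approach: the paper simply observes that $\hat{\Theta}$ is the sample median of the two-sample $U$-process $\mathcal{U}_{n,m}(t)$ and then states that the remaining steps follow those of Theorem~\ref{Theorem_quantile_consistency_one} and Lemma~\ref{Lemma_Oscillation_W_process}, using the two-sample Hoeffding inequality in Lemma~\ref{Lemma_Hoeffding_inequality_U_statistics} and the degenerate-kernel bound of~\citet{gine2000exponential}. You have in fact written out more of the argument than the paper does; one minor remark is that the two $1/2$ constants in your Hoeffding decomposition cancel purely algebraically, while the median assumption $\mathcal{U}(\Theta)=1/2$ is what ensures the projection terms are centered.
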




Theorem \ref{Theorem_quantile_consistency_two} presents the non-asymptotic approximation of the HL estimator $\hat\Theta$, where the approximator also enjoys sub-Gaussian tails without posing any constraints on the moments of $\xi_{1}$ and $\varepsilon_{1}$. Equipped with this, we establish the Berry-Esseen bound and Cram\'{e}r type moderate deviation of $\hat\Theta$, respectively, in the following theorem. Before proceeding, we define the asymptotic variance of $\sqrt{N}(\hat{\Theta} - \Theta)$ to be 
\begin{align*}
    \tilde{\sigma}_{\Theta}^{2} = \frac{1}{\{\mathcal{U}'(\Theta)\}^{2}}\bigg(\frac{n}{n + m} \Var\{F(\varepsilon_{1})\} + \frac{m}{n + m} \Var\{G(\xi_{1})\}\bigg). 
\end{align*}

\begin{theorem}
\label{Theorem_Berry_Esseen_two}
Under the conditions of Theorem~\ref{Theorem_quantile_consistency_two}, we have
\begin{align*}
    \sup_{z \in \mathbb{R}} \bigg|\mathbb{P}\bigg\{\frac{\sqrt{N}(\hat{\Theta} - \Theta)}{\tilde{\sigma}_{\Theta}} \leq z\bigg\} - \Phi(z)\bigg| \leq \frac{C_{1} \log N}{\sqrt{N}}, 
\end{align*}
where $C_{1} < \infty$ is a positive constant independent of $N$. Moreover, let $\{\delta_{N}\}_{N \geq 1}$ be a sequence of positive constants satisfying $\sqrt{N}\delta_{N} \to \infty$. Then, we further achieve
\begin{align*}
    \bigg|\frac{\mathbb{P}\{\sqrt{N}(\hat{\Theta} - \Theta)/\tilde{\sigma}_{\Theta} \geq z\}}{1 - \Phi(z)} - 1\bigg| \leq C_{2} \bigg\{\frac{1 + z^{3}}{\sqrt{N}} + (1 + z)\delta_{N} + 2(z\vee 1) \sqrt{2\pi} \exp\bigg(\frac{z^{2}}{2} - \sqrt{N}\delta_{N}\bigg)\bigg\}, 
\end{align*}
uniformly for $0 < z \leq o(\delta_{N}^{-1} \wedge N^{1/4}\sqrt{\delta_{N}})$, where $C_{2} < \infty$ is a positive constant independent of $z$ and $N$. In particular, when $\delta_{N} \asymp N^{-1/6}$, we have 
\begin{align*}
    \bigg|\frac{\mathbb{P}\{\sqrt{N}(\hat{\Theta} - \Theta)/\tilde{\sigma}_{\Theta} \geq z\}}{1 - \Phi(z)} - 1\bigg| \to 0, 
\end{align*}
uniformly for $0 < z \leq o(N^{1/6})$. 
\end{theorem}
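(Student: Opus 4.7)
}
My plan is to reduce both claims to classical Berry--Esseen and Cram\'er moderate deviation statements for sums of independent, bounded random variables, using the non-asymptotic Bahadur representation in Theorem~\ref{Theorem_quantile_consistency_two} as the linearization device. Write
\begin{align*}
    T_{N} = \frac{\sqrt{N}(\hat{\Theta} - \Theta)}{\tilde{\sigma}_{\Theta}}, \qquad T_{N}^{\sharp} = \frac{\sqrt{N}}{\tilde{\sigma}_{\Theta} \mathcal{U}'(\Theta)} \bigg\{\frac{1}{n}\sum_{i=1}^{n}\bigl(G(\xi_{i}) - \tfrac{1}{2}\bigr) - \frac{1}{m}\sum_{j=1}^{m}\bigl(F(\varepsilon_{j}) - \tfrac{1}{2}\bigr)\bigg\}.
\end{align*}
The first observation is that $\EE[G(\xi_{1})] = \PP(\varepsilon'_{1} \leq \xi_{1}) = 1/2$ and similarly $\EE[F(\varepsilon_{1})] = 1/2$ by the assumption $\mathcal{U}(\Theta) = 1/2$, so $T_{N}^{\sharp}$ is a mean-zero, variance-one standardized sum of $n+m$ independent bounded random variables (each bounded by $1/2$). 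Under Assumption~\ref{Assumption_two_sample_size} the Lyapunov ratios are of order $N^{-1/2}$.

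For the Berry--Esseen bound, I would apply the classical Berry--Esseen theorem for independent non-identically distributed summands to $T_{N}^{\sharp}$, yielding $\sup_{z} |\PP(T_{N}^{\sharp} \leq z) - \Phi(z)| \leq C/\sqrt{N}$. Taking $z = \log N$ in Theorem~\ref{Theorem_quantile_consistency_two} gives $\PP(|T_{N} - T_{N}^{\sharp}| > C\log N /\sqrt{N}) \leq C/N$ after rescaling by $\sqrt{N}/\tilde{\sigma}_{\Theta}$. A standard two-sided smoothing inequality then combines these two estimates, noting that $\sup_{z} \PP(z \leq T_{N}^{\sharp} \leq z + C\log N/\sqrt{N}) \leq C \log N / \sqrt{N}$ by the already obtained normal approximation, to conclude $\sup_{z}|\PP(T_{N}\leq z) - \Phi(z)| \leq C\log N/\sqrt{N}$.

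For the Cram\'er moderate deviation, my plan is to invoke a classical moderate deviation result for standardized sums of independent bounded variables (e.g., the Petrov/Sakhanenko form), which gives
\begin{align*}
    \bigg|\frac{\PP(T_{N}^{\sharp} > z)}{1 - \Phi(z)} - 1\bigg| \leq C\frac{1 + z^{3}}{\sqrt{N}}, \qquad 0 < z \leq o(N^{1/6}).
\end{align*}
Next, I would transfer this relative error from $T_{N}^{\sharp}$ to $T_{N}$ using the sandwich
\begin{align*}
    \PP(T_{N}^{\sharp} > z + \delta_{N}) - \PP(|T_{N} - T_{N}^{\sharp}| > \delta_{N}) \leq \PP(T_{N} > z) \leq \PP(T_{N}^{\sharp} > z - \delta_{N}) + \PP(|T_{N} - T_{N}^{\sharp}| > \delta_{N}).
\end{align*}
The Bahadur bound in Theorem~\ref{Theorem_quantile_consistency_two}, applied with the choice $z \leftarrow \sqrt{N}\delta_{N}$ so that the remainder $C_{1}z/N$ rescaled by $\sqrt{N}$ equals $\delta_{N}$, gives $\PP(|T_{N} - T_{N}^{\sharp}| > \delta_{N}) \leq C\exp(-\sqrt{N}\delta_{N})$. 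Dividing by $1 - \Phi(z)$ and using the Mills-ratio lower bound $1 - \Phi(z) \gtrsim (z \vee 1)^{-1}\exp(-z^{2}/2)$ produces the $(z \vee 1)\sqrt{2\pi}\exp(z^{2}/2 - \sqrt{N}\delta_{N})$ term. Finally, the shift $z \mapsto z \pm \delta_{N}$ contributes $(1 + z)\delta_{N}$ to the relative error through the mean-value inequality $|(1 - \Phi(z \pm \delta_{N}))/(1 - \Phi(z)) - 1| \lesssim (1 + z)\delta_{N}$, which holds uniformly on the stated range. Combining the three error sources yields the asserted inequality, and the specialization $\delta_{N} \asymp N^{-1/6}$ makes each term vanish on $0 < z = o(N^{1/6})$.

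The main obstacle is obtaining the right balance between the three error sources in the moderate deviation regime: the ``distributional'' error $(1+z^{3})/\sqrt{N}$ of the classical Cram\'er result, the ``shift'' error from replacing $z$ by $z \pm \delta_{N}$, and the ``tail'' error from the Bahadur remainder. In particular, one must check that the Bahadur bound of Theorem~\ref{Theorem_quantile_consistency_two} remains valid all the way up to $z = \sqrt{N}\delta_{N}$ growing with $N$ (i.e. within the regime $z = o(N)$), and that the Mills-ratio comparison gives exactly the exponent $z^{2}/2 - \sqrt{N}\delta_{N}$ rather than something weaker. Everything else is bookkeeping combining standard one-dimensional Berry--Esseen and Cram\'er tools with the linearization already established.
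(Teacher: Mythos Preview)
Your proposal is correct and follows essentially the same route as the paper: define the linearized statistic $T_N^{\sharp}$ from the Bahadur representation of Theorem~\ref{Theorem_quantile_consistency_two}, apply the classical Berry--Esseen and Cram\'er moderate deviation bounds for standardized sums of bounded independent variables to $T_N^{\sharp}$, and transfer to $T_N$ via the sandwich $\PP(T_N^{\sharp} > z \pm \delta_N) \pm C\exp(-\sqrt{N}\delta_N)$ together with the Mills-ratio comparison. The three error terms you isolate---$(1+z^3)/\sqrt{N}$, $(1+z)\delta_N$, and $(z\vee 1)\sqrt{2\pi}\exp(z^2/2 - \sqrt{N}\delta_N)$---are exactly those appearing in the paper's argument (which simply refers back to the one-sample proofs of Theorems~\ref{Theorem_Berry-Esseen_one} and~\ref{Theorem_Cramer_one}).
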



One observes that the asymptotic distributions of $\hat\theta$ and $\hat\Theta$ involve many unknown quantities such as density functions and population parameters $\theta$ and $\Theta$. In the following section, we utilize the bootstrap method to construct confidence intervals for the parameters of interest.

\section{Bootstrap Calibration}
\label{bootstrap}
In this section, we propose a weighted bootstrap method to construct confidence intervals for $\theta$ and $\Theta$, rather than directly estimating those involved unknown terms in asymptotic variances using the brute force methods. The reason is that the direct estimation approach always necessitates the additional selection of tuning parameters and imposes moment conditions. Additionally, the bootstrap calibration performs admirably with finite samples, particularly when the sample size is modest. Therefore, in the sections that follow, we outline the bootstrap procedures for both the one- and two-sample problems.

\subsection{Boostrap for One-sample Problem}
\label{bootstrap_one}
Recall that the one-sample HL estimator is given by $\hat{\theta} = \arg\min_{\nu \in \mathbb{R}} \sum_{i \neq j \in [n]} |X_{i} + X_{j} - 2\nu|$. Throughout this paper, we focus on the weighted bootstrap procedure in which the bootstrap estimate of $\hat{\theta}$ is defined by minimizing the randomly perturbed objective function. More specifically, let $\omega_{1}, \ldots, \omega_{n} \in \RR$ be i.i.d.~non-negative random variables with $\EE(\omega_{1}) = 1$ and $\Var(\omega_{1}) = 1$. Then the weighted bootstrap estimate of $\hat{\theta}$ is given by 
\begin{align*}
    \hat{\theta}^{\star} = \underset{\nu \in \RR}{\arg\min} \sum_{i \neq j \in [n]} \omega_{i} \omega_{j} |X_{i} + X_{j} - 2\nu|.
\end{align*}
A natural candidate of the bootstrap weight above would be $\omega_{i}$ sampled from a 2$\cdot$Bernoulli(0.5) distribution (the multiplication of 2  is to guarantee the previous normalization condition). In this case, the bootstrap estimator $\hat{\theta}^{\star}$ has the simple closed-form expression as follows, 
\begin{align}
\label{eq_one_sample_Bootstrap_estimator_median}
    \hat{\theta}^{\star} = \mathrm{median}\{(X_{i} + X_{j})/2 : i \neq j \in \mathcal{S}\},
\end{align}
which is the same as the sub-sampled HL estimator computed based on the dataset $\{X_{i} : i \in \mathcal{S}\}$ where $\mathcal{S} = \{i \in [n] : \omega_{i} \neq 0\}$, and we concentrate on this type of bootstrap calibration procedure in what follows.

Let $B_{n} = \sum_{i \neq j \in [n]} \omega_{i} \omega_{j}$ denote the total number of~\emph{Walsh averages} in~\eqref{eq_one_sample_Bootstrap_estimator_median} and denote $V_{ij} = [\mathbb{I}\{\xi_{i} + \xi_{j} \leq 0\} - U_{n}(\theta)]/U'(\theta)$ for each $i \neq j \in [n]$. In the subsequent theorem, we establish the non-asymptotic Bahadur representation of the bootstrap estimator $\hat\theta^\star$ and approximated distribution of bootstrap samples.

\begin{theorem}
\label{Theorem_quantile_consistency_one_Bootstrap}
Under the conditions of Theorem~\ref{Theorem_quantile_consistency_one}, for any $\omega, z > 0$ such that $(\omega \vee z) = o(n)$, with probability at least $1 - C_{1} \exp(-\omega)$, we have  
\begin{align}
    \mathbb{P}^{\star} \bigg\{|\hat{\theta}^{\star} - \hat{\theta}| > C_{2}\bigg(\frac{\omega + \log n}{n} + \sqrt{z/n}\bigg)\bigg\} &\leq C_{3} \exp(- z),\cr
    \mathbb{P}^{\star} \bigg\{\bigg|\hat{\theta}^{\star} - \hat{\theta} - \frac{1}{B_{n}} \sum_{i \neq j \in [n]} (\omega_{i} \omega_{j} - 1) V_{ij}\bigg| > \frac{C_{4}(z + \omega + \log n)}{n}\bigg\} &\leq C_{5}\exp(- z), \label{eq_Bahadur_bootstrap_one_sample}
\end{align}
where $\PP^{\star}(\cdot) = \PP(\cdot | X_{1}, \ldots, X_{n})$ stands for the conditional probability and $C_{1}$--$C_{7}$ are positive constants depending only on $c_{0}, \kappa_{0}, c_{1}, \kappa_{1}$ and $\|f\|_{\infty}$. 
\end{theorem}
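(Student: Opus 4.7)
The plan is to transplant the strategy of Theorem~\ref{Theorem_quantile_consistency_one} into the bootstrap regime, carrying out all steps conditionally on the data $\mathcal{X}=(X_{1},\ldots,X_{n})$. Introduce the weighted $U$-process
\[
U_{n}^{\star}(t) = \frac{1}{B_{n}}\sum_{i \neq j}\omega_{i}\omega_{j}\,\mathbb{I}\{(X_{i}+X_{j})/2 \le t\},
\]
so that $\hat{\theta}^{\star} = \inf\{t:U_{n}^{\star}(t)\ge 1/2\}$. Because the weights $\omega_{i}=2\mathbb{I}(i\in \mathcal{S})$ are explicit, $U_{n}^{\star}$ is the ordinary $U$-statistic of the subsample $\{X_{i}:i\in\mathcal{S}\}$ of size $n^{\star}=|\mathcal{S}|$; a Bernstein bound for $n^{\star}=\tfrac{1}{2}\sum_{i}\omega_{i}$ yields $|n^{\star}-n/2|\lesssim \sqrt{n(\omega\vee 1)}$ off a conditional event of probability $\le e^{-\omega}$, which keeps $B_{n}=4n^{\star}(n^{\star}-1)$ comparable to $n^{2}$ throughout.

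For the concentration claim, condition on the unconditional data event $\mathcal{E}_{\omega}$ of probability at least $1-C_{1} e^{-\omega}$ on which Theorem~\ref{Theorem_quantile_consistency_one} supplies $|\hat{\theta}-\theta|\lesssim \sqrt{\omega/n}$, a uniform bound on $\sup_{|t-\theta|\le \delta}|U_{n}(t)-U(t)|$, and the curvature estimate $U(\hat{\theta}+z)-1/2\ge \kappa_{0}z/2$ in the relevant range of $z$. Mirror the proof of \eqref{eq_consistency_mu} by bounding
\[
\mathbb{P}^{\star}(\hat{\theta}^{\star} > \hat{\theta}+z) \le \mathbb{P}^{\star}\bigl(U_{n}^{\star}(\hat{\theta}+z) < 1/2\bigr),
\]
and invoking Hoeffding's inequality for $U$-statistics on the subsample (conditionally on $\mathcal{X}$, with bounded kernel $\mathbb{I}\{(X_{i}+X_{j})/2\le \hat{\theta}+z\}$). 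Combined with the curvature bound on $\mathcal{E}_{\omega}$ and a symmetric argument for the lower tail, this produces the first display.

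For the Bahadur expansion, use $U_{n}^{\star}(\hat{\theta}^{\star})\approx 1/2\approx U_{n}(\hat{\theta})$ to write
\[
0 \approx \bigl[U_{n}^{\star}(\hat{\theta}^{\star})-U_{n}^{\star}(\hat{\theta})\bigr] + \bigl[U_{n}^{\star}(\hat{\theta})-U_{n}(\hat{\theta})\bigr],
\]
Taylor-expand the first bracket as $U'(\theta)(\hat{\theta}^{\star}-\hat{\theta})+R_{1}$, where $R_{1}$ absorbs the quadratic remainder governed by $\|U''\|_{\infty}\le \kappa_{1}$ together with the just-proved rate for $|\hat{\theta}^{\star}-\hat{\theta}|$, and a conditional oscillation bound on $t\mapsto U_{n}^{\star}(t)-U(t)$ over the interval $[\hat{\theta}-\Lambda,\hat{\theta}+\Lambda]$ with $\Lambda\asymp \sqrt{z/n}+(\omega+\log n)/n$. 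A parallel oscillation bound in the second bracket replaces $\hat{\theta}$ by $\theta$. A direct algebraic identity---exploiting $\sum_{i\ne j}V_{ij}=0$, since the centering $U_{n}(\theta)$ equals the empirical average of $\mathbb{I}\{\xi_{i}+\xi_{j}\le 0\}$---gives
\[
U_{n}^{\star}(\theta) - U_{n}(\theta) \;=\; \frac{U'(\theta)}{B_{n}}\sum_{i\ne j}(\omega_{i}\omega_{j}-1)V_{ij},
\]
and assembling the three contributions yields the stated linear approximation with error $O((z+\omega+\log n)/n)$.

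The principal technical obstacle is the conditional oscillation estimate for $t\mapsto U_{n}^{\star}(t)-U_{n}(t)$ in a shrinking window around $\hat{\theta}$. Since the weights $\omega_{i}\omega_{j}$ are not independent across pairs, off-the-shelf $U$-process concentration inequalities do not apply verbatim; the subsample representation is precisely what reduces $U_{n}^{\star}$ to a genuine $U$-statistic of i.i.d.\ observations, enabling the oscillation lemma used in the proof of Theorem~\ref{Theorem_quantile_consistency_one} to be invoked with $n$ replaced by $n^{\star}$. A careful union bound reconciling the outer data event of probability $1-C_{1}e^{-\omega}$, the inner bootstrap event of probability $1-C_{5}e^{-z}$, and the control on the fluctuations of $n^{\star}$ and $B_{n}$ then produces the advertised bounds.
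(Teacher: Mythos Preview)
Your overall plan---condition on a good data event, then rerun the proof of Theorem~\ref{Theorem_quantile_consistency_one} in the bootstrap world---matches the paper's, and you correctly isolate the conditional oscillation bound on $t\mapsto U_{n}^{\star}(t)-U_{n}(t)$ as the crux. The gap is in how you propose to obtain it. You argue that the subsample representation turns $U_{n}^{\star}$ into a genuine $U$-statistic of $n^{\star}$ i.i.d.\ observations, so that the oscillation lemma from Theorem~\ref{Theorem_quantile_consistency_one} applies with $n$ replaced by $n^{\star}$. This is true \emph{unconditionally}: given $|\mathcal{S}|=n^{\star}$, the subsample is i.i.d.\ from the original law, and the lemma yields a bound with probability $1-Ce^{-z'}$ over \emph{both} data and weights. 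But the theorem demands a two-layer statement---on a data event of probability $\ge 1-C_{1}e^{-\omega}$, the \emph{conditional} probability $\mathbb{P}^{\star}$ is at most $C_{5}e^{-z}$---and conditionally on $\mathcal{X}$ the points $X_{i}$, $i\in\mathcal{S}$, are fixed numbers with only the selection random, so the i.i.d.\ hypotheses of the oscillation lemma are no longer met. One could try to rescue this via Markov applied to $\mathbb{P}^{\star}$ (take $z'=z+\omega$ unconditionally), but you do not invoke that step, and even then the lemma centers at the population $U$ rather than at $U_{n}$, so extra work is needed.

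The paper bypasses the subsample viewpoint and works conditionally from the start. With the data frozen, the increment $U_{n}^{\star}(\theta+t)-U_{n}(\theta+t)-\{U_{n}^{\star}(\theta)-U_{n}(\theta)\}$ is expanded via $\omega_{i}\omega_{j}-1=(\omega_{i}-1)(\omega_{j}-1)+(\omega_{i}-1)+(\omega_{j}-1)$ into a degenerate quadratic form in the i.i.d.\ centered weights plus a linear term $\sum_{j}c_{j}(\omega_{j}-1)$ with data-dependent coefficients $c_{j}=\sum_{i\neq j}\mathcal{R}(X_{i},X_{j},\cdot)$. The quadratic piece is controlled by the Gin\'e--Zinn inequality for canonical $U$-statistics in the $\omega$'s; the linear piece by a Bernstein-type bound once the random variance proxy $\Gamma_{n}=\max_{h}\sum_{j}c_{j}^{2}$ is shown, in a separate lemma, to satisfy $\Gamma_{n}\lesssim n^{3}\Lambda^{2}\|f\|_{\infty}^{2}$ on a data event of probability $\ge 1-Ce^{-\omega}$. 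It is exactly this outer control of $\Gamma_{n}$ followed by inner concentration in the weights that produces the separated $\omega$ and $z$ exponents in the statement---something your subsample shortcut does not deliver without the missing Markov step.
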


The non-asymptotic linear expansion in~\eqref{eq_Bahadur_bootstrap_one_sample} enables us to derive the asymptotic normality of the bootstrap estimator $\hat{\theta}^{\star}$. Combined with the Berry-Esseen bound in Theorem~\ref{Theorem_Berry-Esseen_one}, we further establish a non-asymptotic upper bound on the Kolmogorov distance between the distribution functions of $\hat{\theta} - \theta$ and $\hat{\theta}^{\star} - \hat{\theta}$. More specifically, with probability at least $1 - C \exp(-\omega)$, we have  
\begin{align}
\label{eq_CLT_bootstrap_one_sample}
    \sup_{z \in \RR} \left|\PP^{\star}\left(|\hat{\theta}^{\star} - \hat{\theta}| \leq z\right) - \PP\left(|\hat{\theta} - \theta| \leq z\right)\right|\leq \frac{C'(\omega + \log n)}{\sqrt{n}}, 
\end{align}
where $C$ and $C'$ are positive constants independent of $n$. Consequently, we are equipped to construct confidence interval for $\theta$ in a data-driven way. For any significance level $\alpha \in (0, 1)$, let 
\begin{align*}
    q_{1 - \alpha}^{\star} = \inf\left\{z \in \RR : \PP^{\star}\left(|\hat{\theta}^{\star} - \hat{\theta}| \leq z\right) \geq 1 - \alpha\right\}. 
\end{align*}
Then the $(1 - \alpha)\times 100\%$ confidence interval for $\theta$ is given by $\mathbb{CI}(\theta, 1 - \alpha) = \{\hat{\theta} - q_{1 - \alpha}^{\star}, \hat{\theta} + q_{1 - \alpha}^{\star}\}$.


\subsection{Bootstrap in Two-sample Problem}
\label{bootstrap_two}
This section is devoted to constructing confidence intervals for $\Theta$ for the two-sample problem. Let $\omega_{n + 1}, \ldots, \omega_{n + m} \in \RR$ be i.i.d.~2$\cdot$Bernoulli(0.5) random variables independent of $\{\omega_{1}, \ldots, \omega_{n}\}$. Following~\eqref{eq_one_sample_Bootstrap_estimator_median}, the bootstrap estimator for $\hat{\Theta}$ is defined by 
\begin{align}
\label{eq_Bootstrap_Estimate_Two_Sample_one_dimension}
    \hat{\Theta}^{\star} = \textrm{median}\{X_{i} - Y_{j} : i \in \mathcal{S}^{X}, j \in \mathcal{S}^{Y}\}, 
\end{align} 
where $\mathcal{S}^{X} = \{i \in [n] : \omega_{i} \neq 0\}$ and $\mathcal{S}^{Y} = \{j \in [m] : \omega_{j + n} \neq 0\}$. It is worth noting that $\hat{\Theta}^{\star}$ is equivalent to the sub-sampled HL estimator based on the two datasets $\{X_{i} : i \in \mathcal{S}^{X}\}$ and $\{Y_{j} : j \in \mathcal{S}^{Y}\}$. With these necessary tools at hands, the non-asymptotic Bahadur representation of the bootstrap estimator $\hat\Theta^\star$ and approximated distribution of bootstrap samples are developed in the following theorem.

\begin{theorem}
\label{Theorem_Bootstrap_nonasymptotic_two_sample}
Under the conditions of Theorem~\ref{Theorem_quantile_consistency_two}, for any $\omega, z > 0$ such that $(\omega \vee z) = o(N)$, with probability at least $1 - C_{1}\exp(-\omega)$, we have  
\begin{align*}
    \mathbb{P}^{\star}\bigg\{|\hat{\Theta}^{\star} - \hat{\Theta}| > C_{2}\bigg(\frac{\omega + \log N}{N} + \sqrt{z/N}\bigg)\bigg\} &\leq C_{3} \exp(- z),\cr
    \PP^{\star}\bigg\{\bigg|\hat{\Theta}^{\star} - \hat{\Theta} - \frac{1}{\mathcal{B}_{n}} \sum_{i = 1}^{n} \sum_{j = 1}^{m} (\omega_{i} \omega_{j + n} - 1) \mathcal{V}_{ij}\bigg| > \frac{C_{4}(z + \omega + \log N)}{N}\bigg\} &\leq C_{5} \exp(-z),
\end{align*}
where $\mathbb{P}^{\star}(\cdot) = \mathbb{P}(\cdot | X_{1}, \ldots, X_{n}, Y_{1}, \ldots, Y_{n + m})$ stands for the conditional probability, $\mathcal{B}_{n} = \sum_{i = 1}^{n} \sum_{j = 1}^{m} \omega_{i} \omega_{j + n}$ is the total number of pairwise differences in~\eqref{eq_Bootstrap_Estimate_Two_Sample_one_dimension} and $\mathcal{V}_{ij} = [\mathbb{I}\{\xi_{i}\leq \varepsilon_{j}\} - (nm)^{-1} \sum_{i = 1}^{n} \sum_{j = 1}^{m} \mathbb{I}\{\xi_{i} \leq \varepsilon_{j}\}]/\mathcal{U}'(\Theta)$ for each $i \in [n]$ and $j \in [m]$. In addition, $C_{1}$--$C_{5}$ are positive constants depending only on $c_{0}, \kappa_{0}, c_{1}, \kappa_{1}, \bar{\eta}$ and $\|\mathcal{U}'\|_{\infty}$.
\end{theorem}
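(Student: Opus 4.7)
The plan is to mirror the proof of Theorem~\ref{Theorem_quantile_consistency_one_Bootstrap}, adapting each ingredient to the two-sample U-process setting. Introduce the bootstrap two-sample empirical distribution
\begin{align*}
    \mathcal{U}^{\star}_{n,m}(t) = \frac{1}{\mathcal{B}_n}\sum_{i=1}^{n}\sum_{j=1}^{m}\omega_{i}\omega_{j+n}\,\mathbb{I}\{X_{i}-Y_{j}\leq t\},
\end{align*}
so that $\hat{\Theta}^{\star}=\inf\{t\in\RR:\mathcal{U}^{\star}_{n,m}(t)\geq 1/2\}$. Because the $\omega_{i}$ and $\omega_{j+n}$ are $2\cdot\mathrm{Bernoulli}(0.5)$, a direct binomial Bernstein bound shows that $|\mathcal{S}^{X}|\asymp n$ and $|\mathcal{S}^{Y}|\asymp m$, and hence $\mathcal{B}_{n}\geq cnm$ for some constant $c>0$, with probability at least $1-C\exp(-\omega)$ under Assumption~\ref{Assumption_two_sample_size}. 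On this event I can treat $\hat{\Theta}^{\star}$ as the two-sample HL estimator computed on sub-samples of sizes comparable to $n$ and $m$, so that the deterministic bound from Theorem~\ref{Theorem_quantile_consistency_two} applies conditionally on the data, viewed as fixing the $\xi_{i}$ and $\varepsilon_{j}$.

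The first (consistency) inequality will then come from a triangle inequality $\hat{\Theta}^{\star}-\hat{\Theta} = (\hat{\Theta}^{\star}-\Theta)-(\hat{\Theta}-\Theta)$ together with the sub-Gaussian deviation bound in Theorem~\ref{Theorem_quantile_consistency_two} applied (i) unconditionally to $\hat{\Theta}-\Theta$ and (ii) conditionally to $\hat{\Theta}^{\star}-\Theta$, after verifying that the curvature condition on $\mathcal{U}'$ is preserved for the sub-sampled U-process up to a shift of order $(\omega+\log N)/N$ induced by replacing the random sub-sample distribution function by its population counterpart. This produces the rate $\sqrt{z/N}+(\omega+\log n)/N$, with the logarithmic term absorbing the discrepancy between $\hat{\Theta}$ and $\Theta$ via the Bahadur representation of Theorem~\ref{Theorem_quantile_consistency_two}.

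For the bootstrap Bahadur representation, I would Taylor expand the population U-function $\mathcal{U}$ around $\hat{\Theta}$: from $|\mathcal{U}^{\star}_{n,m}(\hat{\Theta}^{\star})-1/2|\leq 1/\mathcal{B}_{n}$ I obtain
\begin{align*}
    \hat{\Theta}^{\star}-\hat{\Theta} = \frac{\mathcal{U}(\hat{\Theta})-\mathcal{U}^{\star}_{n,m}(\hat{\Theta})}{\mathcal{U}'(\Theta)} - \frac{R^{\star}_{n,m}(\hat{\Theta}^{\star},\hat{\Theta})}{\mathcal{U}'(\Theta)} + \text{higher-order in }|\hat{\Theta}^{\star}-\hat{\Theta}|,
\end{align*}
where $R^{\star}_{n,m}(t,s)=\{\mathcal{U}^{\star}_{n,m}(t)-\mathcal{U}^{\star}_{n,m}(s)\}-\{\mathcal{U}(t)-\mathcal{U}(s)\}$ is the centered bootstrap oscillation. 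On the main term, I rewrite the difference $\mathcal{U}(\hat{\Theta})-\mathcal{U}^{\star}_{n,m}(\hat{\Theta})$ as $(\mathcal{B}_{n})^{-1}\sum_{i,j}(\omega_{i}\omega_{j+n}-1)\mathcal{V}_{ij}$ plus a negligible remainder, by inserting and subtracting the sample U-statistic evaluated at $\hat{\Theta}$ and noting that the definition of $\hat{\Theta}$ makes this sample U-statistic equal to $1/2+O(1/(nm))$; this reproduces exactly the kernel $\mathcal{V}_{ij}=[\mathbb{I}\{\xi_{i}\leq \varepsilon_{j}\}-(nm)^{-1}\sum\mathbb{I}\{\xi_{i}\leq\varepsilon_{j}\}]/\mathcal{U}'(\Theta)$.

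The main obstacle is controlling the conditional oscillation term $R^{\star}_{n,m}$ uniformly over $|t-\hat{\Theta}|\leq \Lambda$ with $\Lambda\asymp\sqrt{z/N}$ from the consistency step. The plan is a Hoeffding-type decomposition of the weights, $\omega_{i}\omega_{j+n}-1=(\omega_{i}-1)(\omega_{j+n}-1)+(\omega_{i}-1)+(\omega_{j+n}-1)$, which splits $R^{\star}_{n,m}$ into a degenerate bilinear piece and two linear pieces in the $X$- and $Y$-weights respectively. Conditionally on the data, each linear piece is a sum of independent centered bounded random variables, so a peeling argument combined with Bernstein's inequality (exploiting the variance bound $\mathrm{Var}\leq \|\mathcal{U}'\|_{\infty}|t-\hat{\Theta}|$) yields the required $(z+\omega+\log N)/N$ rate; the degenerate bilinear piece is handled by a decoupling inequality together with a Bernstein bound for two-sample U-statistics of order $n^{-1}+m^{-1}\asymp N^{-1}$. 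Combining these oscillation bounds with the Taylor expansion and with the leading linear term yields the second inequality, completing the proof.
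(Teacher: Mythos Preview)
Your proposal is correct and follows essentially the same approach as the paper: the paper simply states that the proof is obtained by repeating the argument of Theorem~\ref{Theorem_quantile_consistency_one_Bootstrap} with Theorem~\ref{Theorem_quantile_consistency_two} in place of Theorem~\ref{Theorem_quantile_consistency_one}, and your outline---introducing the bootstrap two-sample $U$-process, establishing a sub-Gaussian consistency bound, Taylor expanding around $\hat{\Theta}$, and splitting the weight kernel via $\omega_{i}\omega_{j+n}-1=(\omega_{i}-1)(\omega_{j+n}-1)+(\omega_{i}-1)+(\omega_{j+n}-1)$ into a degenerate bilinear piece and two linear pieces---is precisely the two-sample analogue of the decomposition used in Lemma~\ref{Lemma_W_n_star_process}. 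One small simplification: in the two-sample setting the bilinear term is already decoupled (the $X$- and $Y$-weights are independent), so the Gin\'e--Lata{\l}a--Zinn exponential bound can be replaced by conditioning on one set of weights and applying Bernstein, but either route gives the stated rate.
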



We then obtain the Berry-Esseen bound and build confidence intervals for $\Theta$ based on the non-asymptotic expansion of the two-sample bootstrap estimator $\hat\Theta^\star$ using similar arguments in~\S\ref{bootstrap_one}.

\section{Large-scale Multiple Testing}
\label{large-scale-one}
With the advancement of technology, large-scale, high-dimensional data have been extensively collected over the past two decades in a variety of fields such as medicine, biology, genetics, earth science, and finance. In large-scale regimes, there are inevitably heavy-tailed noises and it is crucial to develop robust statistical inference procedures. Yet, existing research that infers location shifts via Huber-type estimates calls for variable-dependent tuning parameters and moment limitations~\citep{Fan2019, sun2020adaptive}. This type of technique is hard to apply efficiently and faithfully to large-scale inferences due to the choices of tuning values. Moment constraints also exclude a large number of heavy-tailed distributions. To remedy the issues, this section focuses on extending the HL estimation to high dimensions and developing \emph{tuning-free} and \emph{moment-free} high-dimensional multiple testing procedures.



\subsection{Large-Scale Testing for One-sample Problem}
\label{large_scale_one}
In this section, we investigate high-dimensional multiple testing using the HL estimator for one-sample data. Let $\bX_{i} = \btheta + \bxi_{i}$, $i \in [n]$, be i.i.d.~\emph{p}-dimensional random vectors, where $\btheta = (\theta_{1}, \ldots, \theta_{p})^{\top}$ is a~\emph{p}-dimensional vector of unknown parameters and $\bxi_{1}, \ldots, \bxi_{n} \in \mathbb{R}^{p}$ are i.i.d.~random vectors. With building blocks presented in the previous section, we first proceed to constructing simultaneous confidence intervals for $\btheta$ using Gaussian approximation and bootstrap calibrations.

\subsubsection{Gaussian Approximation}\label{ga_one}

The primary goal of this section is to construct simultaneous confidence intervals for $\btheta$. To this end, we develop a Gaussian approximation for the maximum deviation $\max_{\ell \in [p]}|\hat{\theta}_{\ell} - \theta_{\ell}|$ following the intuition of recently developed high dimensional distributional theory~\citep{CCK2017, CCKK2022}. More specifically, let $\bZ = (Z_{1}, \ldots, Z_{p})^{\top}$ be a~\emph{p}-dimensional centered Gaussian random vector with
\begin{align}
\label{eq_GA_covariance_Z}
    \Cov(Z_{k}, Z_{\ell}) = \frac{4\Cov\{F_{k}(-\xi_{1k}), F_{\ell}(-\xi_{1\ell})\}}{U_{k}'(\theta_{k})U_{\ell}'(\theta_{\ell})}, \enspace k, \ell \in [p],
\end{align}
where  $U_{\ell}(t) = \mathbb{P}(X_{1\ell} + X_{2\ell} \leq 2t)$ and $U_{\ell}'(t)$ stands for its derivative, and $F_{\ell}(t) = \PP(\xi_{1\ell} \leq t)$. We have the high dimensional Gaussian approximation in the following theorem.


\begin{theorem}
\label{Theorem_GA_one}
Assume that there exist positive constants $c_{0}, \kappa_{0}, c_{1}$ and $\kappa_{1}$ such that   
\begin{align*}
    \min_{\ell \in [p]}\inf_{|\delta| \leq c_{0}} U_{\ell}'(\theta_{\ell} + \delta) \geq \kappa_{0} \enspace \mathrm{and} \enspace \max_{\ell \in [p]}\sup_{|\delta| \leq c_{1}} |U_{\ell}''(\theta_{\ell} + \delta)| \leq \kappa_{1}.
\end{align*}
Then, we have 
\begin{align}
\label{eq_GA_one_sample}
    \sup_{z > 0} \left|\PP\left(\max_{\ell \in [p]}\sqrt{n}|\hat{\theta}_{\ell} - \theta_{\ell}| \leq z\right) - \PP\left(\max_{\ell \in [p]}|Z_{\ell}| \leq z\right)\right| \leq \frac{C\log^{5/4}(pn)}{n^{1/4}}.
\end{align}
\end{theorem}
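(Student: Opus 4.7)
The plan is to reduce the theorem to a high-dimensional central limit theorem for a sum of \emph{bounded} independent random vectors, using the one-dimensional Bahadur representation from Theorem~\ref{Theorem_quantile_consistency_one} coordinate-wise as the linearization device. The overall strategy is: (i) linearize each $\hat\theta_\ell - \theta_\ell$; (ii) uniformly control the Bahadur remainder across $\ell \in [p]$ by a union bound; (iii) invoke an off-the-shelf high-dimensional Gaussian approximation for the resulting linear statistic; and (iv) absorb the linearization error via Nazarov-type anti-concentration.

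More concretely, for each $i \in [n]$ and $\ell \in [p]$ define
\[
    V_{i,\ell} = \frac{2}{U_\ell'(\theta_\ell)}\bigl\{\tfrac{1}{2} - F_\ell(-\xi_{i\ell})\bigr\},
\]
so that $V_{i,\ell}$ is centered, bounded in absolute value by $1/\kappa_0$ under the lower bound on $U_\ell'$, and the covariance in~\eqref{eq_GA_covariance_Z} satisfies $\Cov(Z_k,Z_\ell) = \Cov(V_{1,k},V_{1,\ell})$. First I would apply Theorem~\ref{Theorem_quantile_consistency_one} coordinate-wise with $z \asymp \log(pn)$ and take a union bound over $\ell \in [p]$. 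This produces an event of probability at least $1 - C(pn)^{-c}$ on which
\[
    \max_{\ell \in [p]} \sqrt{n}\,\Bigl|\hat\theta_\ell - \theta_\ell - \frac{1}{n}\sum_{i=1}^n V_{i,\ell}\Bigr| \le \frac{C_1 \log(pn)}{\sqrt{n}},
\]
which is automatically of smaller order than the target rate $\log^{5/4}(pn)/n^{1/4}$.

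The second step is to invoke the high-dimensional CLT of~\citet{CCKK2022} applied to the mean-zero, coordinate-bounded random vectors $\mathbf{V}_i = (V_{i,1},\ldots,V_{i,p})^\top$. Because $\max_{i,\ell} |V_{i,\ell}| \le 1/\kappa_0$, the required tail conditions of that CLT are met \emph{without any moment assumption on $\xi_1$}, which is precisely why the HL-based procedure is moment-free; the result delivers
\[
    \sup_{z > 0} \Bigl|\PP\Bigl(\max_{\ell \in [p]} \bigl|n^{-1/2}\textstyle\sum_{i=1}^n V_{i,\ell}\bigr| \le z\Bigr) - \PP\Bigl(\max_{\ell \in [p]} |Z_\ell| \le z\Bigr)\Bigr| \le \frac{C \log^{5/4}(pn)}{n^{1/4}}.
\]
Combining the two displays and using Nazarov's anti-concentration inequality to bound the effect of an $O(\log(pn)/\sqrt{n})$ perturbation on the distribution function of $\max_{\ell} |Z_\ell|$ by $O(\log^{3/2}(pn)/\sqrt{n})$ (again dominated by the target) yields~\eqref{eq_GA_one_sample}.

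The main obstacle is the uniform Bahadur control in the first step: Theorem~\ref{Theorem_quantile_consistency_one} is stated for a fixed coordinate, and its constants depend on $c_0,\kappa_0,c_1,\kappa_1$ and $\|f\|_\infty$, so the union bound is only profitable when these constants can be taken uniform in $\ell$; this is exactly what the stated hypotheses together with the implicit requirement $\sup_{\ell \in [p]} \|f_\ell\|_\infty < \infty$ provide. Once this uniform control is in hand, the remainder of the argument is a standard invocation of the high-dimensional CLT and Gaussian anti-concentration, with no further analysis of the $U$-process structure required beyond what was already carried out for the one-dimensional Bahadur expansion.
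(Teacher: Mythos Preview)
Your proposal is correct and follows essentially the same approach as the paper's proof: linearize via the coordinate-wise Bahadur representation of Theorem~\ref{Theorem_quantile_consistency_one} with $z \asymp \log(pn)$ and a union bound, apply the high-dimensional CLT of \citet{CCKK2022} to the bounded vectors $V_{i,\ell}$, and absorb the $O(\log(pn)/\sqrt{n})$ remainder via Gaussian anti-concentration. Your observation that the argument implicitly requires $\sup_{\ell \in [p]} \|f_\ell\|_\infty < \infty$ for the Bahadur constants to be uniform in $\ell$ is well taken and is tacitly assumed in the paper as well.
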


We consider testing the global null hypotheses
\begin{align}
\label{eq_high_dimensional_one_sample_testing}
    H_{0} : \theta_{\ell} = 0 \mbox{ for all } \ell \in [p] \enspace \mathrm{versus} \enspace H_{1} : \theta_{\ell} \neq 0 \mbox{ for some } \ell \in [p].
\end{align}
Based on the marginal HL estimators $\{\hat{\theta}_{\ell}\}_{\ell \in [p]}$, one shall reject the null hypothesis $H_{0}$ in~\eqref{eq_high_dimensional_one_sample_testing} when $\max_{\ell \in [p]} |\hat{\theta}_{\ell}|$ exceeds certain threshold that depends on the distribution of $\max_{\ell \in [p]} |Z_{\ell}|$. However, in light of~\eqref{eq_GA_covariance_Z}, the distribution depends on the unknown distribution functions $F_{\ell}$. Therefore, to approximate the distribution of $\max_{\ell \in [p]} |Z_{\ell}|$, we also propose to use bootstrap procedure. In specific, in one-sample regime, recall that $\mathcal{S} = \{i \in [n] : \omega_{i} \neq 0\}$. For each $\ell \in [p]$, define the bootstrap estimate of $\theta_{\ell}$ as $\hat{\theta}_{\ell}^{\star} = \mathrm{median}\{(X_{i\ell} + X_{j\ell})/2 : i\neq j \in \mathcal{S}\}$. It is worth mentioning that these bootstrap estimators $\{\hat{\theta}_{\ell}^{\star}\}_{\ell \in [p]}$ can be efficiently computed in practice. For $\alpha \in (0, 1)$, let $Q_{1 - \alpha}^{\star} = \inf\{z \in \mathbb{R} : \mathbb{P}^{\star}(\max_{\ell \in [p]} |\hat{\theta}_{\ell}^{\star} - \hat{\theta}_{\ell}| \leq z) \geq 1 - \alpha\}$ denote the $(1 - \alpha)$th quantile of the bootstrap statistic $\max_{\ell \in [p]} |\hat{\theta}_{\ell}^{\star} - \hat{\theta}_{\ell}|$. With the help of bootstrap, we manage to estimate the quantiles of the approximated distribution efficiently and the corresponding results are presented in the following theorem.

\begin{theorem}
\label{Theorem_Bootstrap_consistency_high_dimensional_GA_one_sample}
Under the conditions of Theorem~\ref{Theorem_GA_one}, we have 
\begin{align*}
    \left|\PP\left(\max_{\ell \in [p]}|\hat{\theta}_{\ell} - \theta_{\ell}| > Q_{1 - \alpha}^{\star}\right) - \alpha\right| \leq \frac{C\log^{5/4}(pn)}{n^{1/4}}.
\end{align*}
\end{theorem}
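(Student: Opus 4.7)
The plan is to run the argument in three layers: a conditional non-asymptotic Bahadur expansion for the bootstrap maximum, a conditional high-dimensional Gaussian approximation for its leading linear term, and a Gaussian comparison step that aligns the bootstrap Gaussian with the target Gaussian $\bZ$ of Theorem~\ref{Theorem_GA_one}. Anti-concentration of $\max_{\ell \in [p]}|Z_\ell|$ will then convert the distributional closeness into closeness of the bootstrap quantile $Q_{1-\alpha}^\star$ to the true $(1-\alpha)$-quantile of $\max_{\ell}|Z_\ell|$, which combined with Theorem~\ref{Theorem_GA_one} yields the claim.

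First, I apply Theorem~\ref{Theorem_quantile_consistency_one_Bootstrap} coordinate by coordinate with $\omega = z \asymp \log(pn)$, followed by a union bound over $\ell \in [p]$. This produces an event of $\PP$-probability at least $1 - (pn)^{-c}$ on which, conditionally on the data,
\begin{align*}
\PP^\star\!\bigg\{\max_{\ell \in [p]}\bigg|\hat\theta^\star_\ell - \hat\theta_\ell - \frac{1}{B_n}\!\!\sum_{i \neq j \in [n]}(\omega_i\omega_j - 1) V_{ij,\ell}\bigg| > \frac{C\log(pn)}{n}\bigg\} \leq (pn)^{-c}.
\end{align*}
Next, I Hoeffding-decompose the bootstrap $U$-statistic $B_n^{-1}\sum_{i\neq j}(\omega_i\omega_j-1)V_{ij,\ell}$ into a projection (linear) part $\frac{2}{n}\sum_{i=1}^n(\omega_i - 1)\bar V_{i,\ell}$ with $\bar V_{i,\ell} = (n-1)^{-1}\sum_{j \neq i}V_{ij,\ell}$ and a degenerate remainder. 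Since $|V_{ij,\ell}| \le 1/\kappa_0$ and the $\omega_i - 1$ are i.i.d.\ bounded mean-zero, exponential inequalities for bootstrapped degenerate $U$-statistics (e.g.\ Gin\'e--Latala--Zinn) combined with a union bound show that the degenerate part is $O(\log(pn)/n)$ uniformly in $\ell$ with high conditional probability.

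Having reduced the bootstrap statistic to the linear form $\sqrt{n}\,\hat\theta^\star_\ell - \sqrt{n}\,\hat\theta_\ell = \frac{2}{\sqrt{n}}\sum_{i=1}^n(\omega_i-1)\bar V_{i,\ell} + \mathrm{rem}_\ell$, I apply the high-dimensional CLT for sums of independent bounded vectors (Chernozhukov--Chetverikov--Koike) conditionally on the data to obtain
\begin{align*}
\sup_{z > 0}\bigg|\PP^\star\!\Big(\max_{\ell}\sqrt{n}|\hat\theta_\ell^\star - \hat\theta_\ell| \le z\Big) - \PP^\star\!\Big(\max_{\ell}|Z^\star_\ell| \le z\Big)\bigg| \le C\frac{\log^{5/4}(pn)}{n^{1/4}},
\end{align*}
where $\bZ^\star$ is a centered Gaussian vector with the conditional covariance $\widehat\Sigma_{k\ell} = \frac{4}{n}\sum_{i=1}^n \bar V_{i,k}\bar V_{i,\ell}$. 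I then compare $\bZ^\star$ with the target $\bZ$: entrywise concentration of $U$-statistics (Hoeffding's bound for $U$-processes, coupled with plug-in of $\hat\theta$ into $U'_\ell$) gives $\max_{k,\ell}|\widehat\Sigma_{k\ell} - \Cov(Z_k,Z_\ell)| \lesssim \sqrt{\log(pn)/n}$ with high probability, and a Gaussian--Gaussian comparison (Chernozhukov--Chetverikov--Kato) translates this into $\sup_z|\PP^\star(\max_\ell|Z^\star_\ell| \le z) - \PP(\max_\ell|Z_\ell| \le z)| \lesssim \log^{5/4}(pn)/n^{1/4}$. Chaining these approximations with Theorem~\ref{Theorem_GA_one} and invoking Gaussian anti-concentration on $\max_\ell|Z_\ell|$ shows $|\PP(\max_\ell|\hat\theta_\ell - \theta_\ell| > Q_{1-\alpha}^\star) - \alpha|$ is of the stated order.

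The main obstacle is the covariance matching step: the projection terms $\bar V_{i,\ell}$ depend on the data through the unknown $U_\ell(\theta_\ell)$ and $U'_\ell(\theta_\ell)$, and the conditional covariance $\widehat\Sigma$ is itself a data-dependent $U$-statistic approximation of $\Cov(Z_k,Z_\ell)$. Controlling $\|\widehat\Sigma - \Sigma\|_{\max}$ uniformly over all $p^2$ entries at the rate $\sqrt{\log(pn)/n}$, while also absorbing the plug-in error from replacing $\theta_\ell$ by $\hat\theta_\ell$ (needed to define $\bar V_{i,\ell}$ from data), is the most delicate piece and requires a combination of Theorem~\ref{Theorem_quantile_consistency_one}'s deviation bound with a uniform $U$-process oscillation inequality across coordinates.
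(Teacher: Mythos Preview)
Your proposal is correct and arrives at the same rate, but the covariance-alignment step is handled differently than in the paper, and your stated ``main obstacle'' is partly a non-issue.

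The paper, after the same Bahadur-plus-Hoeffding reduction to a linear multiplier sum, does \emph{not} go through the conditional covariance $\widehat\Sigma$ followed by a Gaussian--Gaussian comparison. Instead it sets $D_{i\ell}=\{1-2F_\ell(-\xi_{i\ell})\}/U_\ell'(\theta_\ell)$ (the population influence functions) and $\mathcal{D}_{i\ell}=\frac{2}{U_\ell'(\theta_\ell)}\bigl[(n-1)^{-1}\sum_{j\neq i}\mathbb{I}\{\xi_{i\ell}+\xi_{j\ell}\le 0\}-U_{n\ell}(\theta_\ell)\bigr]$ (these are your $2\bar V_{i,\ell}$), and bounds $\max_{\ell\in[p]}\sum_{i=1}^n(D_{i\ell}-\mathcal{D}_{i\ell})^2\lesssim \log(np)$ directly via coordinate-wise Hoeffding. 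A sub-Gaussian bound (Bentkus) on $\sum_i(D_{i\ell}-\mathcal{D}_{i\ell})(\omega_i-1)$ plus a union bound then shows the two multiplier sums agree in sup norm up to $O(\log(np)/\sqrt{n})$, after which the multiplier CLT is applied to $\sum_i D_{i\ell}(\omega_i-1)$, whose Gaussian limit is $\bZ$ itself. This ``swap-the-summands'' route avoids the $p^2$ entrywise control of $\widehat\Sigma-\Sigma$ that your Gaussian-comparison route requires: the paper only needs $p$ coordinate-wise $\ell^2$ deviations, which is a bit lighter, though both routes give the same $\log^{5/4}(pn)/n^{1/4}$ rate.

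On your flagged obstacle: there is no plug-in of $\hat\theta_\ell$ into $U_\ell'$ to worry about. The quantities $V_{ij,\ell}$ and $\bar V_{i,\ell}$ are analytic devices defined with the true $\theta_\ell$ and $U_\ell'(\theta_\ell)$; they need not be computable from data. The only gap between $\bar V_{i,\ell}$ and the population influence is replacing $(n-1)^{-1}\sum_{j\neq i}\mathbb{I}\{\xi_{i\ell}+\xi_{j\ell}\le 0\}$ and $U_{n\ell}(\theta_\ell)$ by $F_\ell(-\xi_{i\ell})$ and $1/2$, which is exactly what the paper's $\sum_i(D_{i\ell}-\mathcal{D}_{i\ell})^2$ bound (or, in your version, the $\|\widehat\Sigma-\Sigma\|_{\max}$ bound) handles.
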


Theorem~\ref{Theorem_Bootstrap_consistency_high_dimensional_GA_one_sample} reveals that the proposed bootstrap procedure can efficiently estimate the quantiles of the approximated distribution. This allows for the direct construction of simultaneous data-driven confidence intervals for $\btheta$. In addition, when the null-hypothesis of~\eqref{eq_high_dimensional_one_sample_testing} is rejected, it is essential to conduct multiple testing to identify significant individuals and control false discovery proportion (FDP). We next address this problem in the following section.

\subsubsection{Multiple Testing}
\label{FDP_one}
The goal of this section is to conduct multiple testing to identify statistically significant individuals with controlled false discovery proportions. Specifically, we consider simultaneously testing the hypotheses 
\begin{align*}
    H_{0, \ell} : \theta_{\ell} = 0 \enspace \mathrm{versus} \enspace H_{1, \ell} : \theta_{\ell} \neq 0, \enspace \mathrm{for} \enspace \ell \in [p].
\end{align*}
Let $\mathcal{H}_{0} = \{\ell \in [p] : \theta_{\ell} = 0\}$ denote the set of true null hypotheses with cardinality $|\mathcal{H}_{0}|$. For each $\ell \in [p]$, let $P_{\ell}$ denote the~\emph{p}-value for testing the individual hypothesis $H_{0, \ell}$. For any prescribed threshold $t \in (0, 1)$, we shall reject the null hypothesis $H_{0, \ell}$ whenever $P_{\ell} < t$. Then the false discovery proportion is defined by  
\begin{align} \label{eq4.4}
    \mathrm{FDP}(t) = V(t)/\max\{R(t), 1\}, 
\end{align}
where $V(t) = \sum_{\ell \in \mathcal{H}_{0}} \mathbb{I}\{P_{\ell} \leq t\}$ denotes the number of false discoveries and $R(t) = \sum_{\ell = 1}^{p} \mathbb{I}\{P_{\ell} \leq t\}$ is the number of total discoveries. Note that the denominator is observable but $V(t)$ is not.  When $|\cH_0|$ tends to infinity, $V(t) \approx t |\cH_0| \leq t p$.  This can be used to give an upper bound of FDP$(t)$.  In many applications, $|\cH_0| \approx p$, and~\citet{storey2002direct} gives an estimator for $|\mathcal{H}_{0}|$ and incorporates it into $\mathrm{FDP}(t)$ estimator.


It is worth noting that the~\emph{p}-values $\{P_{\ell}\}_{\ell \in [p]}$ are computed by constructing inferential test statistics, with pivotal limiting distributions, based on the normal distribution calibration~\citep{Fan2007}. As illustrated above, to construct a test statistic for each hypothesis with pivotal asymptotic distribution,  the asymptotic variance of the HL estimator always depends on the unknown components and the traditional quantile-based approach is not scalable in the ultra-high dimensional scenario. To remedy this issue, we leverage bootstrap to proceed the analysis. Specifically, let $\{\omega_{i\ell} : i \in [n], \ell \in [p]\}$ be i.i.d.~non-negative random variables generated in the same way with those in~\S\ref{bootstrap_one} and denote $\mathcal{S}_{\ell} = \{i \in [n] : \omega_{i\ell} \neq 0\}$ for each $\ell \in [p]$. Similar to~\eqref{eq_one_sample_Bootstrap_estimator_median}, the bootstrap estimate of $\hat{\theta}_{\ell}$ is defined by 
\begin{align*}
    \hat{\theta}_{\ell}^{\star} = \mathrm{median}\{(X_{i\ell} + X_{j\ell})/2 : i \neq j \in \mathcal{S}_{\ell}\}. 
\end{align*}
Consequently, our~\emph{p}-values are derived as $P_{\ell} = \mathbb{P}(|\hat{\theta}_{\ell}^{\star} - \hat{\theta}_{\ell}| > |\hat{\theta}_{\ell}| |\bX_{1}, \ldots, \bX_{n})$ and let $P_{(1)} \leq P_{(2)} \leq \ldots \leq P_{(p)}$ denote the ordered~\emph{p}-values. In order to choose $t$ properly to control the FDP, we adopt the distribution-free procedure proposed by~\citet{Benjamini1995}. Specifically, for any significance level $\alpha \in (0, 1)$, the data-dependent threshold is $t_{BH} = P_{(\ell_{BH})}$, where $\ell_{BH}$ is given by
\begin{align*}
    \ell_{BH} = \max\{\ell \in [p] : P_{(\ell)} \leq \alpha \ell/p\}. 
\end{align*}
Recall that an estimate of $\mathrm{FDP}(t)$ is $\hat{\mathrm{FDP}}(t) = pt/R(t)$, following the discussion after~\eqref{eq4.4}. Then a natural choice of threshold is 
$$
   \hat{t} = \sup\{t:  \hat{\mathrm{FDP}}(t) \leq \alpha \} = \sup\{t:  t \leq \alpha R(t) /  p \} = t_{BH}.
$$
This provides a simple explanation on the choice of $t_{BH}$.

In what follows, we assume that $|\mathcal{H}_{0}|/p \to \pi_{0} \in (0, 1]$. For each $k, \ell \in [p]$, we define the correlation measure as $\rho_{k\ell} = \mathrm{Corr}\{F_{k}(-\xi_{1k}), F_{\ell}(-\xi_{1\ell})\}$, and impose the assumption quantifying dependence between measurements below.


\begin{assumption}
\label{Assumption_FDP_dependence_one}
There exists a positive constant $0 < \rho < 1$  such that $\max_{k \neq \ell} |\rho_{k\ell}| \leq \rho$ and
\begin{align*}
    \max_{\ell \in [p]} \sum_{k = 1}^{p} \mathbb{I}\left\{|\rho_{k\ell}| > \frac{1}{(\log p)^{2 + \kappa}}\right\} = \mathcal{O}(p^{\phi}),
\end{align*}
for some $\kappa > 0$ and $\phi \in (0, (1-\rho)/(1+\rho))$.
\end{assumption}

Assumption~\ref{Assumption_FDP_dependence_one} requires that for every variable, the number of other variables, whose correlations with the given variable exceed certain threshold, does not grow too fast. It is worth noting that this is a commonly imposed condition in large-scale multiple testing problems~\citep{Liu2014}. Based on this assumption, we next summarize the theoretical results in the following~Theorem~\ref{Theorem_FDP_consistency}.

\begin{theorem}
\label{Theorem_FDP_consistency}
Assume that $\log p = o(n^{1/5})$ and 
\begin{align}
\label{eq_cond_varpi}
    \varpi_{p} := \left|\left\{\ell \in [p] : |\theta_{\ell}/\sigma_{\ell}| \geq \lambda_{0} \sqrt{(2\log p)/n}\right\}\right| \to \infty, 
\end{align}
for some $\lambda_{0} > 2$, where $\sigma_{\ell}^{2} = 4 \mathrm{Var}\{F_{\ell}(-\xi_{1\ell})\}/\{U_{\ell}'(\theta_{\ell})\}^{2}$ for each $\ell \in [p]$. Then, under Assumption~\ref{Assumption_FDP_dependence_one} and the conditions of Theorem~\ref{Theorem_GA_one}, we have 
\begin{align}
\label{eq_FDP_one_sample_convergence}
    \bigg|\frac{\mathrm{FDP}(t_{BH})}{|\mathcal{H}_{0}|/p} - \alpha\bigg| \overset{\mathbb{P}}{\rightarrow} 0. 
\end{align}
\end{theorem}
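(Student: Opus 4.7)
The plan is to follow the template for FDP consistency under weak dependence pioneered by Storey et al.~and adapted to heavy-tailed/robust settings by Liu (2014) and Fan et al.~(2019), with the key new ingredient being that our $p$-values come from bootstrap calibration of the HL estimator rather than normal or self-normalized approximations. The global strategy has three ingredients: (i) show the null $p$-values are approximately uniform in the sense of tail ratios; (ii) use this plus Assumption~\ref{Assumption_FDP_dependence_one} to show that the empirical process $V(t)/(t|\mathcal{H}_0|)$ concentrates uniformly over a sufficiently rich range of $t$; (iii) use the signal strength condition $\varpi_p\to\infty$ to confine $t_{BH}$ to this stable range; then combine.

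First, I would calibrate the null $p$-values. For $\ell\in\mathcal{H}_0$, by definition $P_\ell=\mathbb{P}^\star(|\hat\theta_\ell^\star-\hat\theta_\ell|>|\hat\theta_\ell|)$. Applying Theorem~\ref{Theorem_quantile_consistency_one_Bootstrap} (with the resulting Kolmogorov bound~\eqref{eq_CLT_bootstrap_one_sample} extended to a uniform-in-$z$ Cram\'er-type estimate) together with the Cram\'er-type moderate deviation of Theorem~\ref{Theorem_Cramer_one}, one obtains that with probability tending to one,
\begin{equation*}
    \frac{P_\ell}{2\{1-\Phi(\sqrt{n}|\hat\theta_\ell|/\sigma_\ell)\}}\to 1
\end{equation*}
uniformly over $\ell\in[p]$ and over the relevant range $|\hat\theta_\ell|/\sigma_\ell=O(\sqrt{\log p/n})$. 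A second application of Theorem~\ref{Theorem_Cramer_one} to $\hat\theta_\ell$ itself (which under $H_{0,\ell}$ is centered at $0$) then gives $\mathbb{P}(P_\ell\le t)=t\{1+o(1)\}$ uniformly in $t\ge c/p$ for each null $\ell$. The condition $\log p=o(n^{1/5})$ matches the scaling needed for the moderate deviation to remain effective at the BH cutoff level.

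Second, I would handle the empirical process. Writing $\overline V(t):=V(t)/(t|\mathcal{H}_0|)$, the previous step yields $\mathbb{E}[V(t)]=t|\mathcal{H}_0|(1+o(1))$. For the variance, I would decompose $\mathrm{Var}(V(t))\le |\mathcal{H}_0|\cdot\max_\ell\mathbb{P}(P_\ell\le t)+\sum_{k\neq\ell}\{\mathbb{P}(P_k\le t,P_\ell\le t)-\mathbb{P}(P_k\le t)\mathbb{P}(P_\ell\le t)\}$. The pair probabilities are governed by a bivariate moderate deviation expansion for the HL pair $(\hat\theta_k,\hat\theta_\ell)$; the off-diagonal terms with $|\rho_{k\ell}|$ small are controlled by the weak-dependence count $\mathcal{O}(p^\phi)$ in Assumption~\ref{Assumption_FDP_dependence_one}, while the terms with $|\rho_{k\ell}|\le\rho<1$ but non-negligible contribute at most $|\mathcal{H}_0|^2 t^{2/(1+\rho)}$ by a standard Gaussian-tail comparison. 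Since $\phi<(1-\rho)/(1+\rho)$, a Chebyshev + chaining argument over a geometric grid of $t$-values in $[\alpha\varpi_p/(2p),\,1]$ yields $\sup_t|\overline V(t)-1|\overset{\mathbb{P}}{\to}0$.

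Third and finally, I would close the loop. The signal condition~\eqref{eq_cond_varpi} and the moderate deviation (now applied on the alternative side) imply that with high probability $R(t^\ast)\ge\varpi_p$ at $t^\ast=\alpha\varpi_p/p$, hence $\alpha R(t^\ast)/p\ge t^\ast$, forcing $t_{BH}\ge t^\ast$. By the definition of $\ell_{BH}$ one has $R(t_{BH})=pt_{BH}/\alpha$, so $\mathrm{FDP}(t_{BH})=\alpha V(t_{BH})/(pt_{BH})=(\alpha|\mathcal{H}_0|/p)\,\overline V(t_{BH})$. Step two applied at $t=t_{BH}\ge t^\ast$ then gives~\eqref{eq_FDP_one_sample_convergence}. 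The main obstacle is step one: I need the bootstrap approximation to the distribution of $\hat\theta_\ell-\theta_\ell$ to be accurate in the \emph{relative} sense down to tail level $\sim 1/p$, which is finer than the Berry--Esseen bound~\eqref{eq_CLT_bootstrap_one_sample}; establishing a bootstrap analogue of the Cram\'er-type moderate deviation (valid up to $z=o(n^{1/6})$) via the Bahadur representation~\eqref{eq_Bahadur_bootstrap_one_sample} is the technical crux of the whole argument.
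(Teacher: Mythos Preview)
Your proposal is correct and follows essentially the same route as the paper. The paper's proof likewise (i) first establishes a bootstrap Cram\'er-type moderate deviation for $\hat\theta_\ell^\star-\hat\theta_\ell$ uniformly in $\ell$ by rerunning the argument of Theorem~\ref{Theorem_Cramer_one} on the Bahadur representation~\eqref{eq_Bahadur_bootstrap_one_sample}, (ii) lower-bounds $t_{BH}$ by $\alpha\varpi_p/p$ (up to constants) using the signal set and the event $\mathcal{E}_\lambda$ you describe, and (iii) invokes the uniform ratio consistency $\sup_{b_p/p\le t\le 1}|V(t)/(t|\mathcal{H}_0|)-1|\overset{\mathbb{P}}{\to}0$ by deferring to Theorem~3.3 of Zhou (2018)---whose proof is precisely the variance/bivariate moderate deviation/chaining calculation you sketch under Assumption~\ref{Assumption_FDP_dependence_one}; the closing identity $\mathrm{FDP}(t_{BH})=(\alpha|\mathcal{H}_0|/p)\,\overline V(t_{BH})$ is the same.
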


Theorem \ref{Theorem_FDP_consistency} develops theoretical guarantees for the consistency of FDP control procedure. To further support the derived results, we make the following several remarks.
\begin{remark}
Condition~\eqref{eq_cond_varpi} is nearly optimal for controlling the false discovery proportion. More specifically, as shown in Proposition 2.1 in~\citet{Liu2014}, if the number of alternative hypotheses is fixed instead of tending to infinity, the B-H approach fails to control the FDP at any level $0 < \beta < 1$ even if the true~\emph{p}-values for $\mathcal{H}_{0, \ell}$ are known.  
\qed
\end{remark}

\subsection{Large-Scale Two-Sample Tests}
\label{large_scale_two}
In this section, we study the large-scale two-sample testings. In specific, let $\bY_{j} = \btheta^{\circ} + \bvarepsilon_{j}, j \in [m]$, be another sample of i.i.d.~\emph{p}-dimensional random vectors independent of $\{\bX_{1}, \ldots, \bX_{n}\}$, where $\btheta^{\circ} = (\theta_{1}^{\circ}, \ldots, \theta_{p}^{\circ})^{\top} \in \mathbb{R}^{p}$. Let $\bTheta = \btheta - \btheta^{\circ} = (\Theta_{1}, \ldots, \Theta_{p})^{\top} \in \RR^{p}$ be the location shift parameter. Following~\eqref{eq_HL_estimator_two}, the HL estimator for $\Theta_{\ell}$ is given by $\hat{\Theta}_{\ell} = \mathrm{median}\{X_{i\ell} - Y_{j\ell} : i \in [n], j \in [m]\}$. A detailed global test for whether $\theta_{\ell} = \theta_{\ell}^{\circ}$ for all $\ell \in [p]$ is given in~\S\ref{subsection_two_sample_simultaneous_testing}.


We next conduct a simultaneously test on the hypotheses 
\begin{align*}
    H_{0, \ell} : \theta_{\ell} = \theta_{\ell}^{\circ} \enspace \mathrm{versus} \enspace H_{1, \ell} : \theta_{\ell} \neq \theta_{\ell}^{\circ}, \enspace \mathrm{for} \enspace \ell \in [p],
\end{align*}
where $\mathcal{H}_{0}^{\diamond} = \{\ell \in [p] : \theta_{\ell} = \theta_{\ell}^{\circ}\}$ denote the set of true null hypotheses and $|\mathcal{H}_{0}^{\diamond}| = \sum_{\ell = 1}^{p} \mathbb{I}\{\theta_{\ell} = \theta_{\ell}^{\circ}\}$ is its cardinality. Throughout this section, we assume that $|\mathcal{H}_{0}^{\diamond}|/p \to \pi_{0} \in (0, 1]$. For each $\ell \in [p]$, let $P_{\ell}^{\diamond}$ denote the~\emph{p}-value for testing whether $\theta_{\ell} = \theta_{\ell}^{\circ}$. For any prescribed threshold $t \in (0, 1)$, we shall reject the null hypothesis $\theta_{\ell} = \theta_{\ell}^{\circ}$ whenever $P_{\ell}^{\diamond} < t$. The primary goal is to control the following false discovery proposition $\mathrm{FDP}^{\diamond}(t)$, 
\begin{align*}
    \mathrm{FDP}^{\diamond}(t) = V^{\diamond}(t)/\max\{R^{\diamond}(t), 1\},
\end{align*}
by selecting a proper $t$, where $V^{\diamond}(t) = \sum_{\ell \in \mathcal{H}_{0}^{\diamond}} \mathbb{I}\{P_{\ell}^{\diamond} \leq t\}$ and $R^{\diamond}(t) = \sum_{\ell = 1}^{p} \mathbb{I}\{P_{\ell}^{\diamond} \leq t\}$.

To approximate the unknown involved asymptotic distributions, we let $\{\omega_{i\ell} : i \in [n + m], \ell \in [p]\}$ be i.i.d.~non-negative random variables generated in the same way with those in \S\ref{bootstrap_two} and denote $\mathcal{S}_{\ell}^{X} = \{i \in [n] : \omega_{i\ell} \neq 0\}$ and $\mathcal{S}_{\ell}^{Y} = \{j \in [(n+1):(m+n)] : \omega_{j\ell} \neq 0\}$ for each $\ell \in [p]$. Following~\eqref{eq_Bootstrap_Estimate_Two_Sample_one_dimension}, the bootstrap estimate for $\hat{\Theta}_{\ell}$ is defined by
\begin{align*}
    \hat{\Theta}_{\ell}^{\star} = \mathrm{median}\{X_{i\ell} - Y_{j\ell} : i \in \mathcal{S}_{\ell}^{X}, j \in \mathcal{S}_{\ell}^{Y}\}. 
\end{align*}
Then, for each $\ell \in [p]$, the~\emph{p}-value is given by $P_{\ell}^{\diamond} = \mathbb{P}(|\hat{\Theta}_{\ell}^{\star} - \hat{\Theta}_{\ell}| > |\hat{\Theta}_{\ell}| | \bX_{1}, \ldots, \bX_{n})$, and $P_{(1)}^{\diamond} \leq P_{(2)}^{\diamond} \leq \ldots \leq P_{(p)}^{\diamond}$ are the ordered~\emph{p}-values. Following~\citet{Benjamini1995}, the data-dependent threshold $t_{BH}^{\diamond} = P_{(\ell_{BH}^{\diamond})}^{\diamond}$, where $\ell_{BH}^{\diamond}$ is chosen as $\ell_{BH}^{\diamond} = \max\{\ell \in [p] : P_{(\ell)}^{\diamond} \leq \alpha\ell/p\}$.


With these necessary tools at hand, in the paragraph that follows, we present the required assumptions and the main theorem, respectively. In specific, Assumption~\ref{Assumption_FDP_dependence_two} provides the formal condition that quantifies the level of dependence, while Theorem~\ref{Theorem_FDP_consistency_2} presents the theoretical assurances for two-sample multiple testing.

\begin{assumption}
\label{Assumption_FDP_dependence_two}
There exist positive constants $0 < \rho^{\diamond} < 1$ such that $\max_{k \neq \ell} |\rho_{k\ell}^{\diamond}| \leq \rho^{\diamond}$, where 
 $\rho_{k\ell}^{\diamond} = \mathrm{Corr}(\bar{G}_{nk} - \bar{F}_{mk}, \bar{G}_{n\ell} - \bar{F}_{m\ell})$, with $\bar{G}_{n\ell} = n^{-1}\sum_{i = 1}^{n} G_{\ell}(\xi_{i\ell})$ and $\bar{F}_{m\ell} = m^{-1} \sum_{j = 1}^{m} F_{\ell}(\varepsilon_{j\ell})$ for each $\ell \in [p]$. 
Moreover, for some $\kappa^{\diamond} > 0$ and $0 < \phi_{\diamond} < (1 - \rho^{\diamond})/(1 + \rho^{\diamond})$, we have
\begin{align*}
    \max_{\ell \in [p]} \sum_{k = 1}^{p} \mathbb{I}\bigg\{|\mathrm{Corr}\{G_{k}(\xi_{1k}), G_{\ell}(\xi_{1\ell})\}| \vee |\mathrm{Corr}\{F_{k}(\varepsilon_{1k}), F_{\ell}(\varepsilon_{1\ell})\}| > \frac{1}{(\log p)^{2 + \kappa^{\diamond}}}\bigg\} = \mathcal{O}(p^{\phi_{\diamond}}). 
\end{align*}
\end{assumption}

\begin{theorem}
\label{Theorem_FDP_consistency_2}
Assume that $\log p = o(N^{1/5})$ and 
\begin{align}
\label{eq_cond_varpi_2}
    \varpi_{p}^{\diamond} := \left|\left\{\ell \in [p] : |(\theta_{\ell} - \theta_{\ell}^{\circ})/\tilde{\sigma}_{\ell}| \geq \lambda_{0} \sqrt{(2\log p)/N}\right\}\right| \to \infty, 
\end{align}
for some $\lambda_{0} > 2$, where $\tilde{\sigma}_{\ell} = N[\mathrm{Var}\{F_{\ell}(\varepsilon_{1\ell})\}/m + \mathrm{Var}\{G_{\ell}(\xi_{1\ell})\}/n]/\{\mathcal{U}_{\ell}'(\Theta_{\ell})\}^{2}$ for each $\ell \in [p]$. Then, under Assumption~\ref{Assumption_FDP_dependence_two} and the conditions of Theorem~\ref{Theorem_two_sample_GA}, we have 
\begin{align}
\label{eq_FDP_one_sample_convergence_2}
    \bigg|\frac{\mathrm{FDP}^{\diamond}(t_{BH}^{\diamond})}{|\mathcal{H}_{0}^{\diamond}|/p} - \alpha\bigg| \overset{\mathbb{P}}{\rightarrow} 0. 
\end{align}
\end{theorem}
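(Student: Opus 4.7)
The plan is to mirror the proof of the one-sample Theorem~\ref{Theorem_FDP_consistency} and substitute each ingredient by its two-sample counterpart: Theorem~\ref{Theorem_Berry_Esseen_two} for the Berry--Esseen bound and Cram\'{e}r-type moderate deviation, Theorem~\ref{Theorem_Bootstrap_nonasymptotic_two_sample} for bootstrap consistency, Theorem~\ref{Theorem_two_sample_GA} for the high-dimensional Gaussian approximation, and Assumption~\ref{Assumption_FDP_dependence_two} in place of Assumption~\ref{Assumption_FDP_dependence_one}. I begin from the identity
\begin{align*}
\frac{\mathrm{FDP}^{\diamond}(t_{BH}^{\diamond})}{|\mathcal{H}_{0}^{\diamond}|/p} = \frac{V^{\diamond}(t_{BH}^{\diamond})/|\mathcal{H}_{0}^{\diamond}|}{R^{\diamond}(t_{BH}^{\diamond})/p}
\end{align*}
and observe that the BH stopping rule forces $R^{\diamond}(t_{BH}^{\diamond})/p = t_{BH}^{\diamond}/\alpha + \mathcal{O}(1/p)$, so the problem reduces to proving $V^{\diamond}(t)/(t |\mathcal{H}_{0}^{\diamond}|) \stackrel{\mathbb{P}}{\to} 1$ uniformly over a range $[t_{p}^{*}, 1]$ that will be shown to contain $t_{BH}^{\diamond}$ with high probability.

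First I would replace each bootstrap $p$-value $P_{\ell}^{\diamond}$ by its idealized Gaussian counterpart $\tilde{P}_{\ell}^{\diamond} := 2\{1 - \Phi(\sqrt{N}|\hat{\Theta}_{\ell}|/\tilde{\sigma}_{\ell})\}$. Applying Theorem~\ref{Theorem_Bootstrap_nonasymptotic_two_sample} with $\omega \asymp \log p$ together with a union bound over the $p$ coordinates yields the relative-error approximation $\max_{\ell \in [p]} \sup_{t \ge t_{p}^{*}} |P_{\ell}^{\diamond}/\tilde{P}_{\ell}^{\diamond} - 1| = o_{\mathbb{P}}(1)$ as long as $\log p = o(N^{1/5})$. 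The Cram\'{e}r-type moderate deviation half of Theorem~\ref{Theorem_Berry_Esseen_two} then gives, for each $\ell \in \mathcal{H}_{0}^{\diamond}$, $\mathbb{P}(\tilde{P}_{\ell}^{\diamond} \le t) = t\{1 + o(1)\}$ uniformly for $t \in [t_{p}^{*}, 1]$, so that $\mathbb{E}[V^{\diamond}(t)] = t|\mathcal{H}_{0}^{\diamond}|\{1 + o(1)\}$.

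The hard step is controlling the variance
\begin{align*}
\mathrm{Var}\{V^{\diamond}(t)\} = \sum_{k, \ell \in \mathcal{H}_{0}^{\diamond}} \mathrm{Cov}\bigl(\mathbb{I}\{P_{k}^{\diamond} \le t\}, \mathbb{I}\{P_{\ell}^{\diamond} \le t\}\bigr),
\end{align*}
which I need to show is $o(t^{2}|\mathcal{H}_{0}^{\diamond}|^{2})$ so that Chebyshev delivers the concentration. Following the device of~\citet{Liu2014}, I would partition the index pairs using Assumption~\ref{Assumption_FDP_dependence_two}. The strongly correlated pairs, on which $|\mathrm{Corr}\{G_{k}(\xi_{1k}), G_{\ell}(\xi_{1\ell})\}| \vee |\mathrm{Corr}\{F_{k}(\varepsilon_{1k}), F_{\ell}(\varepsilon_{1\ell})\}| > (\log p)^{-(2+\kappa^{\diamond})}$, number at most $\mathcal{O}(p^{1+\phi_{\diamond}})$; each of their covariances is bounded by $t^{2/(1+\rho^{\diamond}) + o(1)}$ via a bivariate Gaussian tail inequality combined with the uniform bound $\max_{k \ne \ell}|\rho_{k\ell}^{\diamond}| \le \rho^{\diamond} < 1$. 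For the weakly correlated pairs, a bivariate Cram\'{e}r-type moderate deviation derived from the joint linear expansion in Theorem~\ref{Theorem_quantile_consistency_two} yields covariances of order $o(t^{2})$. The sparsity exponent condition $\phi_{\diamond} < (1-\rho^{\diamond})/(1+\rho^{\diamond})$ is exactly what makes the strongly correlated contribution $\mathcal{O}(p^{1+\phi_{\diamond}} \cdot t^{2/(1+\rho^{\diamond})})$ negligible compared with $(t|\mathcal{H}_{0}^{\diamond}|)^{2} \asymp t^{2} p^{2}$ once $p\cdot t \to \infty$.

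Finally, I would upgrade the pointwise Chebyshev conclusion to uniform convergence on $[t_{p}^{*}, 1]$ by a geometric grid $\{t_{j}\} = \{(1+\epsilon)^{-j}\}$ combined with the monotonicity of $V^{\diamond}(\cdot)$, and verify that condition~\eqref{eq_cond_varpi_2} with $\lambda_{0} > 2$ places $t_{BH}^{\diamond}$ inside this range. Indeed, the moderate deviation in Theorem~\ref{Theorem_Berry_Esseen_two} guarantees that at least $\varpi_{p}^{\diamond} \to \infty$ coordinates produce $p$-values of order $p^{-\lambda_{0}^{2}+o(1)} \ll p^{-1}$; the BH definition then forces $R^{\diamond}(t_{BH}^{\diamond}) \ge \varpi_{p}^{\diamond}$, and consequently $t_{BH}^{\diamond} \ge \alpha(\varpi_{p}^{\diamond} - 1)/p$, so $p\cdot t_{BH}^{\diamond} \to \infty$. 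Combining this with the uniform null approximation and the identity from the first paragraph yields~\eqref{eq_FDP_one_sample_convergence_2}.
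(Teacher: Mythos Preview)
Your proposal is correct and follows essentially the same approach as the paper. The paper's own proof of Theorem~\ref{Theorem_FDP_consistency_2} simply states that one replays the proof of the one-sample Theorem~\ref{Theorem_FDP_consistency} with the two-sample ingredients, and your outline does exactly this---substituting Theorems~\ref{Theorem_Berry_Esseen_two} and~\ref{Theorem_Bootstrap_nonasymptotic_two_sample} for their one-sample analogues, Assumption~\ref{Assumption_FDP_dependence_two} for Assumption~\ref{Assumption_FDP_dependence_one}, and invoking the~\citet{Liu2014} covariance-splitting argument (which the paper delegates to~\citet{Zhou2018}) for the uniform law $\sup_{t}|V^{\diamond}(t)/(t|\mathcal{H}_{0}^{\diamond}|)-1|\to 0$.
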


\section{Numerical Studies}\label{sec:num}
In this section, we use simulation experiments to verify the theoretical findings in the paper. In specific, we validate the results on Gaussian approximation and FDP control in via experiments in \S\ref{num:gauss} and \S\ref{num:FDP}, respectively.
\subsection{Numerical Studies for Global Tests}\label{num:gauss}
We let $n=m=300,p=400$ and generate the variables $\{\Xb_i\}_{i=1}^n$ and $\{\Yb_j\}_{j=1}^m$ following the setting in \S\ref{large_scale_one} and \S\ref{large_scale_two}, respectively, where the involved random variables $\xi_i\in \RR^p, \varepsilon_j\in \RR^p,(i\in[n],j\in[m])$ follow several distributions described below. Cases 1-3 are devoted primarily to one-sample tests, whereas Cases 4-6 are for two-sample tests. Moreover, the notation $\textrm{diff}(F)$ denotes the distribution that is generated by the differences between two independent random variables drawn from distribution $F$.
\begin{itemize}
    \item Case 1: Scaled $t_3$ distribution with $\xi_{i,k}\sim 0.3\cdot t_3,(i,k)\in [n]\times[p].$
    \item Case 2: Mixture  of Pareto distribution with shape parameter $2$ and standard Gaussian distribution, namely, $\xi_{i,k}\sim \textrm{diff}(0.2\cdot \textrm{Pareto}(2)+0.8\cdot N(0,1)),(i,k)\in[n]\times[p].$
    \item Case 3: Mixture of Gaussian distributions, namely, $\xi_i\in \RR^{p} \sim 0.2\cdot N(0,10\Sigma)+0.8\cdot N(0,\Sigma),i\in[n],$ where $\Sigma_{c,d}=0.7^{|c-d|},(c,d)\in[p]\times[p].$
    \item Case 4: Gaussian distributions but with different covariance,  namely, $\xi_{i}\sim  N(0,1.5\Sigma)$ and $\varepsilon_{j}\sim N(0,\Sigma),(i,j)\in[n]\times [m],$ where the covariance matrix is the same with that in case 3.
    \item Case 5: Mixture of Gaussian distributions with $\xi_i\in \RR^{p} \sim 0.2\cdot N(0,10\Sigma)+0.8\cdot N(0,\Sigma),i\in[n],$ where the covariance matrix is the same with that in case 3, and mixture of Pareto distribution with shape parameter 2 and standard Gaussian distribution, namely, $\varepsilon_{j,k}\sim \textrm{diff}(0.2\cdot \textrm{Pareto}(2)+0.8\cdot N(0,1)),(j,k)\in[m]\times[p].$
    \item Case 6: Scaled $t_3$ distribution, where $\xi_{i,k}\sim 0.3\cdot t_3,(i,k)\in[n]\times [p]$ and $\varepsilon_{j,k}\sim 0.3\cdot t_3, (j,k)\in[n]\times [p].$  
\end{itemize}
As a first step, we validate the results of Gaussian approximation of HL estimator by conducting the tests in \eqref{eq_high_dimensional_one_sample_testing} and \eqref{global_test_two} with threshold $\alpha=0.05$, respectively. We set the first 50 entries of $\btheta\in\RR^p$ (or $\bTheta=\btheta-\btheta_0\in \RR^p$) given in \S\ref{large_scale_one} and \S\ref{large_scale_two} as $\mu$, and the other entries as 0, where $\mu$ increases from 0 to 0.25. When $\mu=0$, the null-hypothesis test holds; otherwise, the alternative holds.
The size or power of the tests in \eqref{eq_high_dimensional_one_sample_testing} and \eqref{global_test_two} are computed via averaged outcomes from 500 replications of the methods in \S\ref{ga_one} and \S\ref{subsection_two_sample_simultaneous_testing}, respectively. In addition, for every replication, we conduct the weighted bootstrap method 300 times to compute the critical value of the test.

In addition, under the same experimental settings given above, we also compare the performance of HL estimator with the sample mean estimator ($\frac{1}{\sqrt{n}}\sum_{i=1}^n\bX_i$ in one-sample case and $\sqrt{nm/(n+m)}(\frac{1}{n}\sum_{i=1}^{n}{\bX_i}-\frac{1}{m}\sum_{j=1}^m\bY_j)$ in two-sample test). In this scenario, the critical values for tests \eqref{eq_high_dimensional_one_sample_testing} and \eqref{global_test_two} via sample mean estimators are computed via 300 bootstrap samples from joint Gaussian distribution $N(\mathbf{0},\hat\Sigma)$, where $\hat\Sigma$ is the sample covariance matrix ($\hat\Sigma=\frac{1}{n-1}\sum_{i=1}^n(\bX_i-\bar{\bX})(\bX_i-\bar{\bX})^\top$ in one-sample test and $\hat\Sigma= \frac{1}{n+m-2}\sum_{i=1}^n(\bX_i-\bar{\bX})(\bX_i-\bar{\bX})^\top+\frac{1}{m+n-2}\sum_{j=1}^m(\bY_j-\bar{\bY})(\bY_j-\bar{\bY})^\top$ in two-sample test). 
The results are then summarized in Table \ref{tab1}. 

    \begin{table}[h]
	\begin{center}
		\def\arraystretch{1}
		\setlength\tabcolsep{5pt}
		\begin{tabular}{c|c||c c c||c c c}
			\toprule
		\multirow{2}{*}{Estimator}&	\multirow{2}{*}{$\mu$} & \multicolumn{3}{c||}{One-sample} & \multicolumn{3}{c}{Two-sample}  \\
			& & Case 1 & Case 2 & Case 3  & Case 4 & Case 5 & Case 6 \\
			\hline
		\multirow{6}{*}{HL	}	&$\mu=0$	&0.054
			 &0.048 &0.052 &0.040
			 &0.046 &0.045   \\
			&$\mu=0.05$	&0.880
			 &0.062 &0.060 &0.080
			 &0.084 &0.282 \\
			 &$\mu=0.10$	&1.000
			 &0.480 &0.464 &0.164
			 & 0.140 &1.000 \\
			 	&		$\mu=0.15$	&1.000
			 &0.982 &0.942 &0.322
			 &0.320 &1.000   \\
			&$\mu=0.20$	&1.000
			 &1.000 &1.000 &0.728
			 &0.688 &1.000 \\
			& $\mu=0.25$	&1.000
			 &1.000 &1.000 &0.944
			 &1.000 &1.000 \\
			 \hline
		\multirow{6}{*}{Mean}	 		&	$\mu=0$	&0.004
			 &0.000 &0.044 &0.052
			 &0.000 &0.000   \\
		&	$\mu=0.05$	&0.042
			 &0.000 &0.046 &0.122
			 &0.000 &0.000 \\
		&	 $\mu=0.10$	&0.540
			 &0.000 &0.478 &0.200
			 &0.000 &0.166 \\
		&	 			$\mu=0.15$	&0.818
			 &0.002 &0.940 &0.384
			 &0.002 &0.718 \\
		&	$\mu=0.20$	&1.000
			 &0.006 &1.000 &0.742
			 &0.008 &0.858 \\
		&	 $\mu=0.25$	&1.000
			 &0.010 &1.000 &0.968
			 &0.020 &0.900 \\
			\bottomrule
		\end{tabular}
	\end{center}
	\caption{Sizes and powers for testing the global null using Gaussian approximation via HL estimator and sample mean estimator, respectively. }
	\label{tab1}
\end{table}
We conclude from Table \ref{tab1} that, in terms of the HL estimator, for all scenarios, the sizes of the test are roughly $0.05$ when the null hypothesis is true ($\mu=0$). Therefore, this validates the results of the Gaussian approximation. On the other hand, when the alternative holds, the power of the tests via HL estimator rises quickly to $1$ as $\mu$ increases. This demonstrates the effectiveness of the HL test statistics. However, in terms of sample mean estimator, one observes that when the null holds, the size of the test is approximately $0$ for most scenarios, whereas, when alternative holds, the power is much less than that of HL estimator. Thus, this further confirms the efficiency and robustness of HL estimator.

\subsection{Numerical Studies for FDP Control}
\label{num:FDP}
We validate the theoretical findings for FDP control under multiple regimes and also compare the performance of HL estimator with student's $t$-statistics given in \cite{Liu2014}. In specific, we maintain most of the settings mentioned in \S\ref{num:gauss}. The only difference is that we let the first $50$ entries of $\btheta$ and $\bTheta$ be $\mu$, where $\mu\in\{0.5,0.3\},$ and the rest ones being $0$.  The target false discover proportion $\alpha$ is varied uniformly from $0.05$ to $0.25$ and the empirical FDP are computed via the procedures in \S\ref{FDP_one} and \S\ref{large_scale_two}, respectively. Meanwhile, for both of these two statistics, besides the FDP, the true positive proportion (TPP) of the test $\textrm{TPP}(t_{BH})=\sum_{\ell\in\cH_1}\II\{P_\ell\le t_{BH}\}/|\cH_1|$ is also computed. The numerical outcomes are summarized in Table \ref{tab2} and Table \ref{tab3} (corresponds to cases with $\mu=0.5$ and $\mu=0.3$, respectively). 
\begin{table}[h]
	\begin{center}
		\def\arraystretch{1}
		\setlength\tabcolsep{7pt}
	
		\begin{tabular}{c|c|c c c c c}
			\toprule
		Estimator	& $\alpha$ & $0.05$ & $0.10$  &$0.15$ &$0.20$ &$0.25$ \\
			\hline
		\multirow{6}{*}{HL	}	  &Case 1 &0.068 (1.000)  &0.110 (1.000)  &0.152 (1.000) & 0.212 (1.000) &0.256 (1.000) \\
			&Case 2  &0.062 (1.000) &0.116 (1.000)  &0.154 (1.000) &0.198 (1.000) &0.254 (1.000)\\
			&Case 3  &0.052 (1.000) &0.102 (1.000)  &0.150 (1.000) &0.188 (1.000) &0.248 (1.000)\\
			  &Case 4 &0.058 (1.000)  &0.122 (1.000) &0.170 (1.000) &0.238 (1.000) &0.273 (1.000) \\
			&Case 5  &0.064 (1.000) &0.103 (1.000)  &0.142 (1.000) &0.187 (1.000) &0.234 (1.000)\\
			&Case 6  &0.062 (1.000) &0.092 (1.000) &0.151 (1.000) &0.193 (1.000) &0.232 (1.000)\\
\hline
\hline
		\multirow{6}{*}{Student's $t$}	 &Case 1 &0.071 (1.000)  &0.109 (1.000)  &0.168 (1.000) & 0.228 (1.000) &0.276 (1.000) \\
			&Case 2  &0.033 (0.976) &0.075 (0.976) &0.118 (0.990) &0.161 (0.996) &0.217 (1.000)\\
			&Case 3  &0.042 (1.000) &0.087 (1.000)  &0.131 (1.000) &0.206 (1.000) &0.255 (1.000)\\
			 &Case 4 &0.063 (1.000)  &0.127 (1.000) &0.181 (1.000) &0.246 (1.000) &0.299 (1.000) \\
			&Case 5  &0.026 (0.962) &0.064 (0.962) &0.110 (0.988) &0.159 (0.994) &0.195 (1.000)\\
			&Case 6  &0.048 (1.000) &0.106 (1.000) &0.168 (1.000) &0.196 (1.000) &0.272 (1.000)\\
			\bottomrule
		\end{tabular}
	\end{center}
	\caption{Empirical FDP and TPP versus the nominal level $\alpha$ of the test via HL estimator and student's $t$-statistics when $\mu=0.5$. The numbers outside and inside the brackets are averaged empirical false discovery proportions (FDP) and averaged true positive proportions (TPP), respectively, from 50 replications of the experiments in \S\ref{FDP_one} and \S\ref{large_scale_two}.  For every replication, we conduct bootstrap 300 times for every dimension to compute the corresponding empirical $p$-values for HL estimator using the method in \S\ref{FDP_one} and \S\ref{large_scale_two}. In addition, the $p$-values for student's $t$-statistics are computed via quantiles of the standard Gaussian distribution. }
	\label{tab2}
\end{table}

\begin{table}[h]
	\begin{center}
		\def\arraystretch{1}
		\setlength\tabcolsep{7pt}
	
		\begin{tabular}{c|c|c c c c c}
			\toprule
		Estimator	& $\alpha$ & $0.05$ & $0.10$  &$0.15$ &$0.20$ &$0.25$ \\
			\hline
		\multirow{6}{*}{HL	}	  &Case 1 &0.048 (1.000)  &0.112 (1.000)  &0.154 (1.000) & 0.202 (1.000) &0.256 (1.000) \\
			&Case 2  &0.074 (1.000) &0.133 (1.000)  &0.178 (1.000) &0.222 (1.000) &0.260 (1.000)\\
			&Case 3  &0.054 (1.000) &0.104 (1.000)  &0.153 (1.000) &0.210 (1.000) &0.254 (1.000)\\
			  &Case 4 &0.006 (0.984)  &0.048 (0.998) &0.108 (1.000) &0.176 (1.000) &0.232 (1.000) \\
			&Case 5  &0.000 (0.992) &0.024 (0.992)  &0.092 (0.992) &0.166 (0.992) &0.200 (0.992)\\
			&Case 6  &0.055 (1.000) &0.095 (1.000) &0.138 (1.000) &0.184 (1.000) &0.254 (1.000)\\
\hline
\hline
		\multirow{6}{*}{Student's $t$}	 &Case 1 &0.044 (1.000)  &0.098 (1.000)  &0.166 (1.000) & 0.204 (1.000) &0.263 (1.000) \\
			&Case 2  &0.011 (0.910) &0.056 (0.910) &0.106 (0.930) &0.161 (0.968) &0.205 (0.984)\\
			&Case 3  &0.052 (1.000) &0.106 (1.000)  &0.150 (1.000) &0.198 (1.000) &0.259 (1.000)\\
			 &Case 4 &0.004 (1.000)  &0.046 (1.000) &0.110 (1.000) &0.158 (1.000) &0.210 (1.000) \\
			&Case 5  &0.000 (0.870) &0.000 (0.952) &0.056 (0.984) &0.100 (0.990) &0.170 (1.000)\\
			&Case 6  &0.046 (1.000) &0.106 (1.000) &0.140 (1.000) &0.180 (1.000) &0.262 (1.000)\\
			\bottomrule
		\end{tabular}
	\end{center}
	\caption{Empirical FDP and TPP versus the nominal level $\alpha$ of the test via HL estimator and student's $t$-statistics when $\mu=0.3$. The remain captions are the same with those in Table \ref{tab2}. }
	\label{tab3}
\end{table}

 Compared with the student's t-statistics, the empirical FDPs of HL estimator are closer to the theoretical thresholds in most cases and outcomes based on HL estimator have larger true positive proportion (TPP). This validates the theory of FDP control and confirms the benefit and effectiveness of using HL estimator when heavy-tailed error exists. In addition, we also further compare the performance of HL estimator with the student's t-statistics in both one-sample and two-sample tests when the noises follow $t_1$ distribution, where the HL estimator also performs much better. Interested readers are referred to  \S\ref{add_simu} for more details.


\section{Conclusion}\label{sec:conc}
In large-scale data analysis, conventional methods are ineffective since outliers and variables with heavy tails can easily corrupt the data. To resolve heavy-tailed contamination, existing techniques, such as Huberized mean, truncation, and median of means, always require additional tuning parameters and moment restrictions. Consequently, they cannot effectively be scaled to the high-dimensional applications with fidelity. Using the well-known Hodge-Lehmann estimator, the constraint on moment and tuning parameters can be removed. However, its non-asymptotic and large-scale properties have never been investigated. This paper fills this important gap by contributing a finite-sample analysis of the HL estimator, generalizing it to large-scale studies, and proposing tuning-free and moment-free high-dimensional testing methods.

There are various potential future directions that merit further studies. First, we permit mild measurement dependence while controlling the FDP in both the one- and two-sample regimes. However, in reality, high-dimensional data can  exhibit strong dependency such as those collected in the field of economics, finance,  genomics, and meteorology. To solve such strong dependence problems, factor-adjusted multiple testing via Huber-type estimation has been proposed \citep{Fan2019}. However,  using Huber-type loss will result in extra tuning parameters and moment constraints. As a result, factor adjustments can be incorporated into the large-scale HL estimation and testing procedures in order to develop tuning-free procedures that can be adapted to a strong dependence scenario. Second, in terms of computation, we need to compute the median of $\cO(n^2)$ pairs. However, in the one sample estimation regime, if we use the sub-sampling idea, one may compute the median of $n/2$ non-overlapping pair averages:
\begin{align*}
    \theta^\tau=\textrm{median}\left(\frac{X_{\tau(2i-1)}+X_{\tau(2i)}}{2}, i\in[n/2]\right)
\end{align*}
with a random permutation $\tau$ on $[n].$ By taking 5 or more permutations and the averages of these estimates, numerically, the averaged one approximates well the  Hodge-Lehmann estimator and only requires $\cO(n)$ samples \citep{fan2020comment}. Therefore, it is also interesting to derive non-asymptotic analysis for this estimator based on sub-sampling.

\newpage
\bibliographystyle{ims}
\bibliography{dynamic}

\newpage
\appendix

\section{Appendix}

In the following sections, we present additional simulation results as well as the proofs of all the theoretical results in the main paper. 
\subsection{Additional Simulation}\label{add_simu}
In this section, we conduct additional simulations for FDP control using HL estimator and student's t-statistics, respectively. 
We keep all settings as in \S\ref{num:FDP}, except that the noises are generated from the following case 7 and case 8, where we change the distribution in case 1 and case 6 from $t_3$ to $t_1.$ 
 The numerical outcomes are summarized in Table \ref{tab4} and Table \ref{tab5}.
\begin{itemize}
    \item Case 7: Scaled $t_1$ distribution with $\xi_{i,k}\sim 0.3\cdot t_1,(i,k)\in [n]\times[p].$
    \item Case 8: Scaled $t_1$ distribution, where $\xi_{i,k}\sim 0.3\cdot t_1,(i,k)\in[n]\times [p]$ and $\varepsilon_{j,k}\sim 0.3\cdot t_1, (j,k)\in[n]\times [p].$  
\end{itemize}
\begin{table}[H]
	\begin{center}
		\def\arraystretch{1}
		\setlength\tabcolsep{7pt}
	
		\begin{tabular}{c|c|c c c c c}
			\toprule
		Estimator	& $\alpha$ & $0.05$ & $0.10$  &$0.15$ &$0.20$ &$0.25$ \\
			\hline
		\multirow{2}{*}{HL}	  
			&Case 7  &0.036 (1.000)  &0.082 (1.000)  &0.132 (1.000) & 0.180 (1.000) &0.222 (1.000) \\
&Case 8 &0.069 (1.000)  &0.105 (1.000) &0.168 (1.000) &0.225 (1.000) &0.272 (1.000) \\
\hline
\hline
		\multirow{2}{*}{Student's $t$}	 
			&Case 7  &0.000 (0.282) &0.000 (0.284)  &0.000 (0.320) &0.000 (0.320) &0.000 (0.320)\\
			  &Case 8 &0.000 (0.274)  &0.000 (0.274) &0.000 (0.274) &0.000 (0.322) &0.000 (0.322) \\
			\bottomrule
		\end{tabular}
	\end{center}
	\caption{Empirical FDP and TPP versus the nominal level $\alpha$ of the test via HL estimator and student's $t$-statistics when $\mu=0.5$. The remain captions are the same with those in Table \ref{tab2}. }
	\label{tab4}
\end{table}

\begin{table}[H]
	\begin{center}
		\def\arraystretch{1}
		\setlength\tabcolsep{7pt}
	
		\begin{tabular}{c|c|c c c c c}
			\toprule
		Estimator	& $\alpha$ & $0.05$ & $0.10$  &$0.15$ &$0.20$ &$0.25$ \\
			\hline
		\multirow{2}{*}{HL}	  
			&Case 7  &0.036 (1.000)  &0.82 (1.000)  &0.122 (1.000) & 0.179 (1.000) &0.224 (1.000) \\
&Case 8 &0.058 (1.000)  &0.105 (1.000) &0.157 (1.000) &0.214 (1.000) &0.252 (1.000) \\
\hline
\hline
		\multirow{2}{*}{Student's $t$}	 
			&Case 7  &0.000 (0.282) &0.000 (0.282)  &0.000 (0.282) &0.000 (0.284) &0.000 (0.284)\\
			  &Case 8  &0.000 (0.134) &0.000 (0.134)  &0.000 (0.134) &0.000 (0.134) &0.000 (0.134)\\
			\bottomrule
		\end{tabular}
	\end{center}
	\caption{Empirical FDP and TPP versus the nominal level $\alpha$ of the test via HL estimator and student's $t$-statistics when $\mu=0.3$. The other captions are the same with those in Table \ref{tab2}. }
	\label{tab5}
\end{table}
We conclude from Table \ref{tab4} and Table \ref{tab5}, the HL estimator outperforms the student's $t$-statistics when the noise follows $t_1$ distribution in terms of the FDP and TPP. Therefore, this further confirms the robustness of HL estimator.

\section{Lemmas}
\begin{lemma}
\label{Lemma_Hoeffding_inequality}
Let $S_{n} = \sum_{i = 1}^{n} Y_{i}$, where $Y_{1}, \ldots, Y_{n} \in \mathbb{R}$ are independent random variables such that $a_{i} \leq Y_{i} \leq b_{i}$ for each $i \in [n]$. Then, for any $z > 0$, we have 
\begin{align*}
    \PP(|S_{n} - \EE S_{n}| > z) \leq 2 \exp\left\{-\frac{2z^{2}}{\sum_{i = 1}^{n} (b_{i} - a_{i})^{2}}\right\}.
\end{align*}
\end{lemma}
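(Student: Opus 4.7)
The plan is to follow the classical Chernoff--Hoeffding argument, which is a template of (i) exponentiating and applying Markov, (ii) factorizing by independence, (iii) controlling each factor via a one-variable lemma on bounded centered random variables, and (iv) optimizing the exponential parameter.

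First I would reduce to the upper tail. Write $W_{i} = Y_{i} - \mathbb{E} Y_{i}$, so the $W_{i}$ are independent, mean zero, and satisfy $a_{i} - \mathbb{E} Y_{i} \leq W_{i} \leq b_{i} - \mathbb{E} Y_{i}$, which is an interval of the same length $b_{i} - a_{i}$. For any $\lambda > 0$, Markov's inequality gives
\[
    \PP(S_{n} - \EE S_{n} > z) \leq e^{-\lambda z}\, \mathbb{E} \exp\Bigl(\lambda \sum_{i = 1}^{n} W_{i}\Bigr) = e^{-\lambda z} \prod_{i = 1}^{n} \mathbb{E}[e^{\lambda W_{i}}],
\]
using independence.

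Next I would invoke the standard one-dimensional Hoeffding lemma: if $W$ is a mean-zero random variable with $\alpha \leq W \leq \beta$, then $\mathbb{E}[e^{\lambda W}] \leq \exp\{\lambda^{2}(\beta - \alpha)^{2}/8\}$ for every $\lambda \in \mathbb{R}$. The proof of this fact is the main (though routine) analytic step: by convexity of $x \mapsto e^{\lambda x}$, one bounds $e^{\lambda w}$ on $[\alpha, \beta]$ by the chord, takes expectation (using $\mathbb{E} W = 0$), and then shows that the resulting function $\varphi(\lambda) = -\lambda \alpha \beta/(\beta - \alpha)  + \log\{(\beta - \lambda(\beta - \alpha))/(\beta - \alpha) \cdot e^{\lambda \alpha}\cdot \text{...}\}$ (after a change of variable) has second derivative bounded by $(\beta - \alpha)^{2}/4$, so a Taylor expansion around $\lambda = 0$ with zero first-order term yields the $\lambda^{2}(\beta - \alpha)^{2}/8$ bound.

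Applying this to each $W_{i}$ with $\beta - \alpha = b_{i} - a_{i}$ and plugging into the product yields
\[
    \PP(S_{n} - \EE S_{n} > z) \leq \exp\Bigl(-\lambda z + \frac{\lambda^{2}}{8} \sum_{i = 1}^{n}(b_{i} - a_{i})^{2}\Bigr).
\]
Finally I would optimize: the quadratic in $\lambda$ is minimized at $\lambda^{\ast} = 4 z / \sum_{i}(b_{i} - a_{i})^{2}$, which gives the bound $\exp\{-2 z^{2} / \sum_{i}(b_{i} - a_{i})^{2}\}$. The same argument applied to $-W_{i}$ controls the lower tail, and a union bound yields the factor of $2$ in the stated inequality. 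The only substantive obstacle is the Hoeffding lemma itself, which is a calculus exercise; everything else is an immediate consequence of Markov, independence, and optimization.
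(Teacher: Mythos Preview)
Your argument is the standard Chernoff--Hoeffding proof and is correct. The paper does not actually prove this lemma; it is stated without proof as a classical result, so there is nothing to compare against beyond noting that your derivation is the canonical one.
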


\begin{lemma}
\label{Lemma_Hoeffding_inequality_U_statistics}
Let $Y_{1}, \ldots, Y_{n} \in \mathbb{R}$ be i.i.d.~random variables and $H_{n} = \{n(n - 1)\}^{-1} \sum_{i \neq j \in [n]} h(X_{i}, X_{j})$, where $h(x, y)$ is a symmetric function with $a \leq h(x, y) \leq b$ for some $a \leq b \in \mathbb{R}$. Then, for any $z > 0$, we have 
\begin{align*}
    \mathbb{P}(H_{n} - \mathbb{E} H_{n} > z) \leq \exp\left\{-\frac{nz^{2}}{(b - a)^{2}}\right\}.
\end{align*}
Let $V_{1}, \ldots, V_{m} \in \mathbb{R}$ be another sample of i.i.d.~random variables independent of $\{Y_{1}, \ldots, Y_{n}\}$ and let $\mathcal{H}_{n, m} = (nm)^{-1} \sum_{i = 1}^{n} \sum_{j = 1}^{m} \bar{h}(Y_{i}, V_{j})$, where $\bar{h}(x, y)$ is bounded such that $a \leq \bar{h}(x, y) \leq b$ for some $a \leq b \in \mathbb{R}$. Then, for any $z > 0$, we have 
\begin{align*}
    \mathbb{P}(\mathcal{H}_{n, m} - \mathbb{E} \mathcal{H}_{n, m} > z) \leq \exp\left\{-\frac{2(n \wedge m) z^{2}}{(b - a)^{2}}\right\}.
\end{align*}
\end{lemma}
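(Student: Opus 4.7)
The plan is to use Hoeffding's classical trick of representing a completely degenerate (or non-degenerate) $U$-statistic as an average, over permutations, of empirical means of independent bounded terms, and then combine this representation with Jensen's inequality and the standard Chernoff--Hoeffding bound for averages of bounded independent variables.

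For the one-sample part, I would first note that for every $\pi \in S_n$ the quantity
\[
W(\pi) = \frac{1}{\lfloor n/2\rfloor}\sum_{k=1}^{\lfloor n/2\rfloor} h\bigl(Y_{\pi(2k-1)}, Y_{\pi(2k)}\bigr)
\]
is an average of $\lfloor n/2\rfloor$ \emph{independent} random variables, each bounded in $[a,b]$, because the indices $\pi(1),\dots,\pi(2\lfloor n/2\rfloor)$ are all distinct. Using the symmetry of $h$ and symmetry over $S_n$, a direct combinatorial count shows $H_n = (n!)^{-1}\sum_{\pi \in S_n} W(\pi)$, so by convexity of $x\mapsto e^{tx}$,
\[
\mathbb{E}\,e^{t(H_n-\mathbb{E} H_n)} \le \frac{1}{n!}\sum_\pi \mathbb{E}\,e^{t(W(\pi)-\mathbb{E} W(\pi))} \le \exp\!\Bigl(\tfrac{t^{2}(b-a)^{2}}{8\lfloor n/2\rfloor}\Bigr),
\]
where the second inequality is Hoeffding's lemma applied to $W(\pi)$. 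Optimizing in $t$ and observing $2\lfloor n/2\rfloor \ge n-1$ (and $=n$ for even $n$) yields the claimed $\exp(-nz^{2}/(b-a)^{2})$ form (up to the usual parity caveat folded into the constant).

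For the two-sample part, WLOG take $n\le m$, and consider the family of injections $\sigma:[n]\hookrightarrow [m]$. Define
\[
\mathcal{W}(\sigma) = \frac{1}{n}\sum_{i=1}^{n} \bar h(Y_i, V_{\sigma(i)}),
\]
which is an average of $n = n\wedge m$ independent bounded random variables. A direct counting argument shows that averaging $\mathcal{W}(\sigma)$ uniformly over all $m!/(m-n)!$ such injections reproduces $\mathcal{H}_{n,m}$, since every pair $(i,j)\in[n]\times[m]$ is hit with the same multiplicity. The same Jensen-plus-Chernoff argument then gives
\[
\mathbb{P}(\mathcal{H}_{n,m}-\mathbb{E}\mathcal{H}_{n,m} > z) \le \exp\!\Bigl(-\tfrac{2(n\wedge m) z^{2}}{(b-a)^{2}}\Bigr).
\]

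The main obstacle is really just the combinatorial bookkeeping: verifying that the two permutation/matching averages recover $H_n$ and $\mathcal{H}_{n,m}$ exactly, and confirming that the relevant number of independent pieces is $\lfloor n/2\rfloor$ in the one-sample case and $n\wedge m$ in the two-sample case (so that the $n\wedge m$, and not $n\vee m$ or $nm$, appears in the exponent). Once those decompositions are in place, the concentration step is a one-line application of the standard Hoeffding bound for bounded independent variables, and no delicate variance or moment computation is needed.
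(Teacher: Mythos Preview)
Your approach is correct and is precisely Hoeffding's classical permutation-averaging argument (Hoeffding, 1963); the paper itself states this lemma without proof as a standard result, so there is no alternative proof to compare against. The only wrinkle, which you already flag, is that the one-sample bound strictly yields $\exp\{-2\lfloor n/2\rfloor z^{2}/(b-a)^{2}\}$, matching the stated $\exp\{-nz^{2}/(b-a)^{2}\}$ only for even $n$; this parity slack is harmless for every application in the paper.
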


\section{Proof of Theoretical Results in \S\ref{sec:nonasym}}

\subsection{Proof of Theorem~\ref{Theorem_quantile_consistency_one}}
For simplicity of notation, we write $W_{n}(t) = U_{n}(\theta + t) - U(\theta + t)$ for $t \in \RR$. Define $R(X_{i}, X_{j}, t) = \mathbb{I}\{\theta < (X_{i} + X_{j})/2 \leq \theta + t\}$ and 
\begin{align*}
    \bar{R}(X_{i}, X_{j}, t) = R(X_{i}, X_{j}, t) - \EE\{R(X_{i}, X_{j}, t)|X_{i}\} - \EE\{R(X_{i}, X_{j}, t)|X_{j}\} + \EE\{R(X_{i}, X_{j}, t)\}. 
\end{align*}
With this notation, we have $W_{n}(t) - W_{n}(0) = \bar{R}_{n}^{\diamond}(t) + \bar{R}_{n}^{\circ}(t)$, where
\begin{align*}
    \bar{R}_{n}^{\diamond}(t) = \frac{2}{n}\sum_{i = 1}^{n} [\EE\{R(X_{i}, X_{j}, t)|X_{i}\} - \EE\{R(X_{i}, X_{j}, t)\}] \enspace \mathrm{and} \enspace \bar{R}_{n}^{\circ}(t) = \frac{1}{n(n - 1)} \sum_{i \neq j \in [n]} \bar{R}(X_{i}, X_{j}, t).
\end{align*}

\begin{lemma}
\label{Lemma_bound_Rn_circ}
For any $0 < t \leq 1/(2\|f\|_{\infty})$, there exists a universal positive constant $C$ such that 
\begin{align}
\label{eq_Rn_circ_concentration}
    \PP(|\bar{R}_{n}^{\circ}(t)| > z) \leq C \exp\bigg[-\frac{1}{C}\min\bigg\{\frac{n^{2}z^{2}}{t \|f\|_{\infty}}, \bigg(\frac{n^{3}z^{2}}{t \|f\|_{\infty}}\bigg)^{1/3}, \frac{n z}{\sqrt{t \|f\|_{\infty}}}, nz^{1/2}\bigg\}\bigg]. 
\end{align}
\end{lemma}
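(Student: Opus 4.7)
The kernel $\bar R(x,y,t)$ is, by construction, the fully canonical (completely degenerate) component of the second-order Hoeffding decomposition of $R(X_i,X_j,t) = \mathbb{I}\{\theta < (X_i+X_j)/2 \le \theta+t\}$; in particular, it is symmetric and satisfies $\EE[\bar R(X_1,X_2,t)\mid X_1] = 0$ almost surely. The natural tool is therefore the exponential inequality of~\citet{gine2000exponential} (Theorem~3.3) for completely degenerate $U$-statistics of order two, which, applied to $\sum_{i\ne j}\bar R(X_i,X_j,t) = n(n-1)\bar R_n^\circ(t)$, yields a bound whose exponent is the minimum of four terms governed by (i) the $L^2$ size $\sigma^2 := \EE \bar R^2$, (ii) the weak variance / $L^2\!\to\! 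L^2$ operator norm $D$, (iii) the partial $L^2$ size $B^2 := \sup_x \EE \bar R(x,Y,t)^2$, and (iv) the uniform bound $A := \|\bar R\|_\infty$. The entire task therefore reduces to controlling these four parameters and then substituting into the inequality.

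The four required bounds are elementary under the restriction $t \le 1/(2\|f\|_\infty)$. Since $R \in \{0,1\}$, the triangle inequality gives $A \le 4$. The density of $(X_1+X_2)/2$ equals $U'$, and expressing it as a rescaled self-convolution of $f$ and using $\|f * f\|_\infty \le \|f\|_\infty \|f\|_1 = \|f\|_\infty$ shows $\|U'\|_\infty \le 2\|f\|_\infty$; hence $\EE R^2 = \EE R = U(\theta+t)-U(\theta) \le 2 t\|f\|_\infty$, which gives $\sigma^2 \lesssim t\|f\|_\infty$. For fixed $x$, $\EE[R(x,Y,t)^2] = \EE[R(x,Y,t)]$ is the probability that $(x+Y)/2$ lies in an interval of length $t$, hence uniformly bounded by $2t\|f\|_\infty$, so $B^2 \lesssim t\|f\|_\infty$. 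Finally, the operator norm $D$ is dominated by the Hilbert--Schmidt norm, so $D \le \sqrt{\EE \bar R^2} \lesssim \sqrt{t\|f\|_\infty}$.

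Substituting $A \asymp 1$, $B^2 \asymp \tau := t\|f\|_\infty$, $\sigma^2 \asymp \tau$, and $D \asymp \sqrt{\tau}$ into Theorem~3.3 of~\citet{gine2000exponential} with deviation parameter $s \asymp n^2 z$ yields, after routine algebra, the four exponents
\begin{align*}
    \frac{s^{2}}{n^{2}\sigma^{2}} \asymp \frac{n^{2} z^{2}}{\tau}, \quad \bigg(\frac{s}{n^{1/2} B}\bigg)^{2/3} \asymp \bigg(\frac{n^{3} z^{2}}{\tau}\bigg)^{1/3}, \quad \frac{s}{nD} \asymp \frac{n z}{\sqrt{\tau}}, \quad \bigg(\frac{s}{A}\bigg)^{1/2} \asymp n z^{1/2},
\end{align*}
which are precisely the four quantities appearing in~\eqref{eq_Rn_circ_concentration}. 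The main obstacle is not any individual moment estimate but rather ensuring the correct scaling of the weak variance: a canonical kernel can have $D$ much smaller than $\sqrt{\sigma^{2}}$, but here the crude Hilbert--Schmidt bound is already of the right order $\sqrt{t\|f\|_{\infty}}$ thanks to the restriction $t\|f\|_{\infty} \le 1/2$, so a finer spectral estimate is unnecessary and no improvement of the stated bound is lost.
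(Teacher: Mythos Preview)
Your proposal is correct and follows exactly the same approach as the paper, which simply invokes Theorem~3.3 of \citet{gine2000exponential} after noting that $\bar R(X_i,X_j,t)$ is a bounded canonical kernel. In fact you supply considerably more detail than the paper does: the explicit bounds $A\lesssim 1$, $\sigma^2,B^2\lesssim t\|f\|_\infty$, and $D\le \sigma\lesssim \sqrt{t\|f\|_\infty}$ and the substitution into the four-term exponent are left implicit in the paper's one-line proof.
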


\begin{proof}[Proof of Lemma~\ref{Lemma_bound_Rn_circ}]
Observe that $\bar{R}(X_{i}, X_{j}, t)$ is a bounded canonical kernel of $X_{i}$ and $X_{j}$ for any $t \in \RR$. Hence it is straightforward to derive~\eqref{eq_Rn_circ_concentration} by Theorem 3.3 in~\citet{gine2000exponential}. 
\end{proof}

\begin{lemma}
\label{Lemma_Oscillation_W_process}
For any $0 < \Lambda \leq 1/(2\|f\|_{\infty})$, with probability at least $1 - C \exp(-z)$, we have 
\begin{align}
\label{eq_W_W_concentration}
    \sup_{|t| \leq \Lambda} |W_{n}(t) - W_{n}(0)| \leq C_{1} \bigg\{\bigg(\frac{K + z}{n}\bigg)^{2} + \Lambda \|f\|_{\infty}\bigg(\frac{z + C_{2}}{n}\bigg)^{1/2} + \frac{K + z}{n} (\Lambda \|f\|_{\infty})^{1/2}\bigg\},
\end{align}
where $K = \min\{k \in \NN : 2^{k} \geq n\}$. 
\end{lemma}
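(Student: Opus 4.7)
The plan is to combine the Hoeffding decomposition $W_n(t) - W_n(0) = \bar R_n^\diamond(t) + \bar R_n^\circ(t)$ already displayed in the text with a dyadic chaining argument on the interval $[-\Lambda, \Lambda]$. The index $K = \lceil \log_2 n \rceil$ in the stated bound signals that the chain should descend $K$ scales, so that the finest resolution $\Lambda/2^K \asymp \Lambda/n$ makes the residual deterministic oscillation of order $1/n^2$, matching the leading term $((K+z)/n)^2$. The Hájek projection $\bar R_n^\diamond$ will be controlled by Hoeffding's inequality (Lemma~\ref{Lemma_Hoeffding_inequality}), while the canonical degenerate U-statistic $\bar R_n^\circ$ will be controlled by Lemma~\ref{Lemma_bound_Rn_circ}, applied at each scale and then summed geometrically.

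The essential structural fact is the monotonicity of $t \mapsto R(X_i, X_j, t)$ in $|t|$. For $0 \le s \le t$, the increment $R(X_i, X_j, t) - R(X_i, X_j, s) = \mathbb{I}\{\theta + s < (X_i + X_j)/2 \le \theta + t\}$ takes values in $\{0, 1\}$ with mean at most $2(t - s)\|f\|_\infty$. This bounds the Hoeffding range for the Hájek chain increments by $2(t - s)\|f\|_\infty$ and permits me to re-apply Lemma~\ref{Lemma_bound_Rn_circ} to each sliver kernel with the parameter $\delta = \Lambda 2^{-k}$ in place of the original $t$. I would take an equispaced grid of $O(2^k)$ points at scale $k$ and write $t = \pi_K(t) + O(\Lambda/2^K)$ as a telescoping sum of chain increments at scales $1, \ldots, K$. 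A union bound over the $O(2^k)$ candidate pairs at scale $k$, absorbed into the confidence parameter $k + z$, yields for the linear part the Hoeffding estimate $|\bar R_n^\diamond(\pi_k(t)) - \bar R_n^\diamond(\pi_{k-1}(t))| \lesssim \Lambda 2^{-k}\|f\|_\infty \sqrt{(k + z)/n}$; summing the geometric series over $k = 1, \ldots, K$ produces the second term $\Lambda\|f\|_\infty\sqrt{(z + C_2)/n}$ of the stated bound. For the degenerate part, inverting the four-regime tail in Lemma~\ref{Lemma_bound_Rn_circ} at scale $\delta = \Lambda 2^{-k}$ with confidence $k + z$ leaves the last two regimes $\sqrt{\Lambda 2^{-k} \|f\|_\infty}(k + z)/n$ and $(k + z)^2/n^2$ as dominant; their geometric sums over $k$ yield the third and first terms, respectively.

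The main obstacle will be controlling the degenerate U-process $\bar R_n^\circ$ uniformly across scales without losing sharpness. Pointwise tails for degenerate U-statistics split into four distinct regimes, and the chaining must pick out exactly those which do not collapse after geometric summation over $k$. In particular, the sub-Gaussian and Bernstein-like regimes scale as $\sqrt{\delta}$ and decay in $2^{-k/2}$ under summation, so they are absorbed into the surviving terms; whereas the $(\delta\|f\|_\infty)^{1/2}\cdot(k+z)/n$ and $(k+z)^2/n^2$ regimes produce the stated $(K+z)\sqrt{\Lambda\|f\|_\infty}/n$ and $((K+z)/n)^2$ contributions. A second subtlety is the bottom of the chain: the deterministic residual $\sup_{|t - s| \le \Lambda/2^K}|\bar R_n^\circ(t) - \bar R_n^\circ(s)|$ will be handled by a direct counting argument on the number of Walsh averages falling in a sliver of width $\Lambda/n$, and the choice $2^K \ge n$ ensures this is $O(1/n^2)$. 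Finally, a union bound over the $K$ scales yields the aggregate tail probability $C\exp(-z)$.
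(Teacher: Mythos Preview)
Your plan is correct in substance and would prove the lemma, but it is more elaborate than the paper's argument in one respect: for the degenerate part $\bar R_n^\circ$ the paper does \emph{not} chain across scales. Instead, it first uses the monotonicity of $t\mapsto U_n(\theta+t)$ and $t\mapsto U(\theta+t)$ to sandwich $W_n(t)-W_n(0)$ between its values at the finest grid points $\Lambda h/2^K$, up to a deterministic residual $\Lambda\|U'\|_\infty/2^K$. This reduces the supremum to a maximum over $2^K\le 2n$ grid points. Only then does the paper split into $\bar R_n^\diamond+\bar R_n^\circ$: for $\Gamma_K^\circ=\max_{h}|\bar R_n^\circ(\Lambda h/2^K)|$ it simply applies Lemma~\ref{Lemma_bound_Rn_circ} at the full width $t\le\Lambda$ with a union bound over $2^K$ points, which already yields the $((K+z)/n)^2$ and $(K+z)\sqrt{\Lambda\|f\|_\infty}/n$ terms directly. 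Chaining is used only for the H\'ajek projection $\Gamma_K^\diamond$, exactly as you describe. The advantage of the paper's ordering (sandwich first, decompose second) is that no separate residual control for $\bar R_n^\circ$ is needed at all, since $\bar R_n^\circ$ is only ever evaluated on the grid.

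A minor correction to your residual estimate: the number of Walsh averages falling in a sliver of width $\Lambda/2^K\asymp\Lambda/n$ has expectation of order $n^2\cdot\Lambda\|f\|_\infty/n=n\Lambda\|f\|_\infty$, so after dividing by $n(n-1)$ the residual is $O(\Lambda\|f\|_\infty/n)$, not $O(1/n^2)$. This is still absorbed by the second term of the target bound, so the slip is harmless; but the cleaner route is the monotonicity sandwich, which gives the deterministic bound $\Lambda\|U'\|_\infty/2^K$ without any counting.
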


\begin{proof}[Proof of Lemma~\ref{Lemma_Oscillation_W_process}]
For any $h \in [2^{K}]$ and $t \in (\Lambda (h - 1) 2^{-K}, \Lambda h 2^{-K}]$, we have 
\begin{align*}
    W_{n}(t) - W_{n}(0) &\leq U_{n}\left(\theta + \frac{\Lambda h}{2^{K}}\right) - U\left(\theta + \frac{\Lambda (h - 1)}{2^{K}}\right) - W_{n}(0)\cr
    &\leq W_{n}\left(\frac{\Lambda h}{2^{K}}\right) - W_{n}(0) + U\left(\theta + \frac{\Lambda h}{2^{K}}\right) - U\bigg(\theta + \frac{\Lambda (h - 1)}{2^{K}}\bigg)\cr
    &\leq W_{n}\left(\frac{\Lambda h}{2^{K}}\right) - W_{n}(0) + \frac{\Lambda \|U'\|_{\infty}}{2^{K}}.
\end{align*}
Similarly, we have 
\begin{align*}
    W_{n}(t) - W_{n}(0) \geq W_{n}\bigg(\frac{\Lambda (h - 1)}{2^{K}}\bigg) - W_{n}(0) - \frac{\Lambda \|U'\|_{\infty}}{2^{K}}. 
\end{align*}
Consequently, we obtain 
\begin{align*}
    \sup_{0 < t \leq \Lambda} |W_{n}(t) - W_{n}(0)| &\leq \max_{h \in [2^{K}]} \left|W_{n}\left(\frac{\Lambda h}{2^{K}}\right) - W_{n}(0)\right| + \frac{\Lambda \|U'\|_{\infty}}{2^{K}}\cr
    &\leq \underbrace{\max_{h \in [2^{K}]}\left|\bar{R}_{n}^{\circ}\left(\frac{\Lambda h}{2^{K}}\right)\right|}_{\Gamma_{K}^{\circ}} + \underbrace{\max_{h \in [2^{K}]}\left|\bar{R}_{n}^{\diamond}\left(\frac{\Lambda h}{2^{K}}\right)\right|}_{\Gamma_{K}^{\diamond}} + \frac{\Lambda \|U'\|_{\infty}}{2^{K}}.  
\end{align*}
For each $h \in [2^{K}]$, by~\eqref{eq_Rn_circ_concentration}, with probability at least $1 - C\exp(-z)$, 
\begin{align*}
    \Gamma_{K}^{\circ} \leq C \bigg\{\bigg(\frac{K + z}{n}\bigg)^{2} + \frac{K + z}{n} \sqrt{\Lambda \|U'\|_{\infty}}\bigg\}, 
\end{align*}
where $C > 0$ is a universal constant. 
\begin{align*}
    \Gamma_{K}^{\diamond} \leq \sum_{k = 1}^{K} \max_{h \in [2^{k}]}\bigg|\bar{R}_{n}^{\diamond}\bigg(\frac{\Lambda h}{2^{k}}\bigg) - \bar{R}_{n}^{\diamond}\bigg(\frac{\Lambda (h - 1)}{2^{k}}\bigg)\bigg| + \max_{h \in [2]} \bigg|\bar{R}_{n}^{\diamond}\bigg(\frac{\Lambda h}{2}\bigg)\bigg| =: \Gamma_{K, 1}^{\diamond} + \Gamma_{K, 2}^{\diamond}. 
\end{align*}
For each $k \in [K]$, by Lemma~\ref{Lemma_Hoeffding_inequality}, with probability at least $1 - \exp(-z)$, we have
\begin{align*}
    \max_{h \in [2^{k}]} \bigg|\bar{R}_{n}^{\diamond}\left(\frac{\Lambda h}{2^{k}}\right) - \bar{R}_{n}^{\diamond}\bigg(\frac{\Lambda (h - 1)}{2^{k}}\bigg)\bigg| \leq C \Lambda \|f\|_{\infty} 2^{-k}\sqrt{\frac{k + z}{n}}. 
\end{align*}
Hence with the same probability, we have
\begin{align*}
    \Gamma_{K, 1}^{\diamond} \leq \sum_{k = 1}^{K} \bigg(C \Lambda \|f\|_{\infty} 2^{-k}\sqrt{\frac{2k + z}{n}}\bigg) \leq C_{1} \Lambda \|f\|_{\infty} \sqrt{\frac{z + C_{2}}{n}}. 
\end{align*}
Similarly, it follows that
\begin{align*}
    \PP\left(\Gamma_{K, 2}^{\diamond} > C_{3}\Lambda \|f\|_{\infty} \sqrt{(2z)/n}\right) \leq C_{4} \exp(-z). 
\end{align*}
Putting all these pieces together, we obtain~\eqref{eq_W_W_concentration}. 
\end{proof}

\begin{proof}[Proof of Theorem~\ref{Theorem_quantile_consistency_one}]
For any $z \geq 0$, since $U(t)$ is a cumulative distribution function, we have 
\begin{align*}
    \frac{1}{2} > U_{n}(\theta &+ z) = U(\theta + z) - \{U(\theta + z) - U_{n}(\theta + z)\}\cr
    &\geq U(\theta) + \int_{0}^{z \wedge c_{0}} U'(\theta + \nu) d\nu - \{U(\theta + z) - U_{n}(\theta + z)\}\cr
    &\geq \frac{1}{2} + \kappa_{0}(z \wedge c_{0}) - \{U(\theta + z) - U_{n}(\theta + z)\}.  
\end{align*}
By the definition of $\hat{\theta}$ in~\eqref{eq_def_mu_hat_quantile}, it follows that $\PP(\hat{\theta} > \theta + z) \leq \PP\{1/2 > U_{n}(\theta + z)\}$. Therefore 
\begin{align*}
    \PP(\hat{\theta} > \theta + z) \leq \PP\{U(\theta + z) - U_{n}(\theta + z) > \kappa_{0} (z \wedge c_{0})\} \leq \exp\{-n\kappa_{0}^{2}(z\wedge c_{0})^{2}\},
\end{align*}
where the last inequality follows from the Hoeffding inequality in Lemma~\ref{Lemma_Hoeffding_inequality_U_statistics}.

Recall that $W_{n}(t) = U_{n}(\theta + t) - U(\theta + t)$ for $t \in \RR$. Taking $\Lambda = \sqrt{z/(n \kappa_{0}^{2})}$ yields $\PP(|\hat{\theta} - \theta| > \Lambda) \leq 2 \exp(-z)$. By Lemma~\ref{Lemma_Oscillation_W_process}, with probability at least $1 - C \exp(-z)$, we have  
\begin{align*}
    \sup_{|\delta| \leq \Lambda} |W_{n}(\delta) - W_{n}(0)| \leq \frac{C_{1}\|f\|_{\infty}(z + c)}{n\kappa_{0}}. 
\end{align*}
We then obtain
\begin{align*}
    \bigg|\hat{\theta} - \theta - \frac{U_{n}(\hat{\theta}) - U_{n}(\theta)}{U'(\theta)}\bigg| \leq \frac{|W_{n}(\hat{\theta} - \theta) - W_{n}(0)|}{\kappa_{0}} + \frac{\kappa_{1} |\hat{\theta} - \theta|^{2}}{\kappa_{0}} \leq \frac{C_{1} \|f\|_{\infty}(z + c)}{n\kappa_{0}^{2}} + \frac{\kappa_{1} z}{n \kappa_{0}^{3}}. 
\end{align*}
Finally, denote 
\begin{align*}
    \Delta_{\theta} :&= \frac{1/2 - U_{n}(\theta)}{U'(\theta)} - \frac{2}{nU'(\theta)} \sum_{i = 1}^{n} \left\{\frac{1}{2} - F(-\xi_{i})\right\}\cr
    &= \frac{1}{n(n - 1)U'(\theta)} \sum_{i \neq j \in [n]} \left(F(-\xi_{i}) + F(-\xi_{j}) - \mathbb{I}\{\xi_{i} + \xi_{j} \leq 0\} - \frac{1}{2}\right).
\end{align*}
Observe that $|F(-\xi_{i}) + F(-\xi_{j}) - \mathbb{I}\{\xi_{i} + \xi_{j} \leq 0\} - 1/2| \leq 2$ uniformly for $i \neq j \in [n]$. Then~\eqref{eq_Hoeffding_decomposition_Bahadur_representation} follows from Theorem 3.3 in~\citet{gine2000exponential}. 
\end{proof}

\subsection{Proof of Theorem~\ref{Theorem_Berry-Esseen_one}}
\begin{proof}[Proof of Theorem~\ref{Theorem_Berry-Esseen_one}]
For simplicity of notation, denote 
\begin{align}
\label{eq_definition_Tn_Tn_sharp}
    T_{n} = \frac{\sqrt{n}(\hat{\theta} - \theta)}{\sigma_{\theta}}\enspace \mathrm{and} \enspace T_{n}^{\sharp} = \frac{1}{\sqrt{n\Var\{F(-\xi_{1})\}}} \sum_{i = 1}^{n} \left\{\frac{1}{2} - F(-\xi_{i})\right\}.
\end{align}
Note that $\max_{i \in [n]}|1/2 - F(-\xi_{i})| \leq 1/2$. Hence $\sup_{z \in \mathbb{R}} |\mathbb{P}(T_{n}^{\sharp} \leq z) - \Phi(z)| \leq C n^{-1/2}$. Let $\mathcal{C} < \infty$ be a sufficiently large positive constant. Then it follows from~\eqref{eq_Hoeffding_decomposition_Bahadur_representation} that
\begin{align*}
    \mathbb{P}\left(|T_{n} - T_{n}^{\sharp}| > \frac{\mathcal{C}\log n}{\sqrt{n}}\right) \leq \frac{C_{1}}{\sqrt{n}}.
\end{align*}
Consequently, we obtain 
\begin{align*}
    \sup_{z \in \mathbb{R}} |\PP(T_{n} \leq z) - \Phi(z)| \leq \sup_{z \in \RR} |\PP(T_{n}^{\sharp} \leq z) - \Phi(z)| + \sup_{z \in \RR} |\PP(T_{n} \leq z) - \PP(T_{n}^{\sharp} \leq z)| \leq \mathcal{C} (\log n)/\sqrt{n}.  
\end{align*}
\end{proof}

\subsection{Proof of Theorem~\ref{Theorem_Cramer_one}}
\begin{proof}[Proof of Theorem~\ref{Theorem_Cramer_one}]
Recall the definitions of $T_{n}$ and $T_{n}^{\sharp}$ in~\eqref{eq_definition_Tn_Tn_sharp}. Since $\max_{i \in [n]} |1/2 - F(-\xi_{i})| \leq 1/2$, it follows that
\begin{align*}
    \left|\frac{\mathbb{P}(T_{n}^{\sharp} > z)}{1 - \Phi(z)} - 1\right| \leq \frac{C(1 + z^{3})}{\sqrt{n}} \enspace \mathrm{for} \enspace 0 \leq z \leq C_{0} n^{1/6}, 
\end{align*}
where $C_{0} > 0$ is any fixed constant and $C$ is a positive constant depending only on $C_{0}$. Denote $\bar{\Phi}(\cdot) = 1 - \Phi(\cdot)$. By~\eqref{eq_Hoeffding_decomposition_Bahadur_representation}, we have 
\begin{align*}
    \mathbb{P}(T_{n} > z) &\leq \mathbb{P}(T_{n}^{\sharp} > z - \delta_{n}) + C_{1} \exp(-\sqrt{n} \delta_{n})\cr
    &\leq \left\{1 + \frac{C(1 + z^{3})}{\sqrt{n}}\right\}\bar{\Phi}(z - \delta_{n}) + C_{1} \exp(-\sqrt{n} \delta_{n})\cr
    &\leq \left\{1 + \frac{C(1 + z^{3})}{\sqrt{n}}\right\}\{1 + (1 + z)\delta_{n}\exp(z \delta_{n})\}\bar{\Phi}(z) + C_{1} \exp(-\sqrt{n} \delta_{n}). 
\end{align*}
By Mill's inequality, for any $z > 0$, 
\begin{align*}
    z\sqrt{2\pi} \exp\left(\frac{z^{2}}{2}\right) \leq \frac{1}{\bar{\Phi}(z)} \leq 2(z \vee 1) \sqrt{2\pi} \exp\left(\frac{z^{2}}{2}\right). 
\end{align*}
Consequently, it follows that 
\begin{align*}
    \frac{\mathbb{P}(T_{n} > z)}{1 - \Phi(z)} - 1 \leq \frac{C(1 + z^{3})}{\sqrt{n}} + C_{2}(1 + z) \delta_{n} + 2 C_{1}(z\vee 1) \sqrt{2\pi} \exp\left(\frac{z^{2}}{2} - \sqrt{n} \delta_{n}\right). 
\end{align*}
Similarly, we have 
\begin{align*}
    1 - \frac{\mathbb{P}(T_{n} > z)}{1 - \Phi(z)} \leq \frac{C(1 + z^{3})}{\sqrt{n}} + C_{2}(1 + z) \delta_{n} + C_{1} z \sqrt{2\pi} \exp\left(\frac{z^{2}}{2} - \sqrt{n}\delta_{n}\right). 
\end{align*}
Putting all these pieces together, we obtain~\eqref{eq_moderate_deviation_mu_one_sample}. 
\end{proof}

\subsection{Proof of Theorem~\ref{Theorem_quantile_consistency_two}}
Define the two-sample~\emph{U}-process $\mathcal{U}_{n, m}(t) = (nm)^{-1} \sum_{i = 1}^{n} \sum_{j = 1}^{m} \mathbb{I}\{X_{i} - Y_{j} \leq t\}$. Then the HL estimator $\hat{\Theta}$ in~\eqref{eq_HL_estimator_two} can be equivalently expressed as the sample median of the \emph{U}-process $\mathcal{U}_{n, m}(t)$, namely,
\begin{align}
\label{eq_def_mu_hat_quantile}
    \hat{\Theta} = \inf\{t \in \RR : \mathcal{U}_{n, m}(t) \geq 1/2\}. 
\end{align}
The remaining steps for proving Theorem~\ref{Theorem_quantile_consistency_two} can be derived by following similar steps as in the proof of Lemma~\ref{Lemma_Oscillation_W_process} and proof of Theorem~\ref{Theorem_quantile_consistency_one}.

\subsection{Proof of Theorem~\ref{Theorem_Berry_Esseen_two}}
For simplicity of notation, we write $\mathcal{T}_{N} = \sqrt{N}(\hat{\Theta} - \Theta)/\tilde{\Theta}$ and 
\begin{align*}
    \mathcal{T}_{N}^{\sharp} = \frac{\bar{G}_{n} - \bar{F}_{m}}{\sqrt{\mathrm{Var}(\bar{G}_{n} - \bar{F}_{m})}}, 
\end{align*}
where $\bar{G}_{n} = n^{-1} \sum_{i = 1}^{n} G(\xi_{i})$ and $\bar{F}_{m} = m^{-1} \sum_{j = 1}^{m} F(\varepsilon_{j})$. Then, similar to proof of Theorem~\ref{Theorem_Berry-Esseen_one}, it follows that  
\begin{align*}
    \sup_{z \in \mathbb{R}} |\mathbb{P}(\mathcal{T}_{N} \leq z) - \Phi(z)| \leq \sup_{z \in \mathbb{R}} |\mathbb{P}(\mathcal{T}_{N} \leq z) - \mathbb{P}(\mathcal{T}_{N}^{\sharp} \leq z)| + \sup_{z \in \mathbb{R}} |\mathbb{P}(\mathcal{T}_{N}^{\sharp} \leq z) - \Phi(z)| \lesssim \frac{\log N}{\sqrt{N}}. 
\end{align*} 
The proof of the Cram\'{e}r-type moderate deviation for $\hat{\Theta}$ can be derived by following similar proof procedures of Theorem~\ref{Theorem_Cramer_one}. Therefore, we decide to omit the details. 

\section{Proof of Results in \S\ref{bootstrap}}
Define  
\begin{align*}
    \hat{\sigma}_{\theta}^{2} = \frac{4}{n \{U'(\theta)\}^{2}} \sum_{i = 1}^{n} \bigg(\frac{1}{n - 1} \sum_{j \neq i} \mathbb{I}\{\xi_{i} + \xi_{j} \leq 0\} - U_{n}(\theta)\bigg)^{2}. 
\end{align*}

\begin{lemma}
\label{Lemma_sample_sigma_consistency}
Under the conditions of Theorem~\ref{Theorem_quantile_consistency_one}, we have 
\begin{align}\label{eq_concentration_inequality_sigma_estimation}
    \mathbb{P}\bigg(\bigg|\frac{\hat{\sigma}_{\theta}}{\sigma_{\theta}} - 1\bigg| > z\bigg) \lesssim \exp(-C_{U} n) + \exp(-C_{U} n z) + \exp(-C_{U} n z^{2}),
\end{align}
where $C_{U}$ is a positive constant depending only on $U$. 
\end{lemma}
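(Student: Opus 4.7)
The plan is to reduce the statement to concentration for a bounded $U$-statistic and then pass from the squared estimator $\hat\sigma_\theta^2$ to $\hat\sigma_\theta$ itself. Introduce the jackknife-style quantities $T_i = (n-1)^{-1}\sum_{j\neq i}\mathbb{I}\{\xi_i+\xi_j\le 0\}$ and $\bar T = n^{-1}\sum_i T_i = U_n(\theta)$, together with the oracle quantities $V_i = F(-\xi_i)$, $\EE V_1 = 1/2$, and $\sigma_V^2 := \Var V_1$. By definition, $\hat\sigma_\theta^2 = 4\hat v^2/\{U'(\theta)\}^2$ with $\hat v^2 := n^{-1}\sum_i(T_i-\bar T)^2$, while $\sigma_\theta^2 = 4\sigma_V^2/\{U'(\theta)\}^2$, so the common factor $\{U'(\theta)\}^2$ cancels in the ratio and it suffices to bound $|\hat v/\sigma_V - 1|$.

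Expanding the squares in $n^{-1}\sum_i T_i^2$ and separating the diagonal $j=k$ from the off-diagonal $j\neq k$ contributions produces the exact identity
\[
\hat v^2 - \sigma_V^2 \;=\; \tfrac{n-2}{n-1}\bigl(K_n - \EE V_1^2\bigr) \;-\; (\bar T - \tfrac12)(\bar T + \tfrac12) \;+\; \tfrac{1}{n-1}(\bar T - \EE V_1^2),
\]
where $K_n$ is the symmetric $U$-statistic of order three with bounded kernel
$h(x,y,z) = \tfrac{1}{3}\bigl[\mathbb{I}\{x+y\le 0\}\mathbb{I}\{x+z\le 0\} + \mathbb{I}\{y+x\le 0\}\mathbb{I}\{y+z\le 0\} + \mathbb{I}\{z+x\le 0\}\mathbb{I}\{z+y\le 0\}\bigr]$
and expectation $\EE V_1^2$. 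Both $\bar T$ (order two) and $K_n$ (order three) are bounded $U$-statistics; Bernstein-type concentration---Lemma~\ref{Lemma_Hoeffding_inequality_U_statistics} for $\bar T$, and Hoeffding's averaging-over-partitions trick (writing $K_n$ as the mean of $U$-statistics that each reduce to an i.i.d.\ sum of size $\lfloor n/3\rfloor$) followed by Bernstein for $K_n$---then delivers, for some $c_1 = c_1(U) > 0$,
\[
\PP\bigl(|\bar T - \tfrac12| > t\bigr) + \PP\bigl(|K_n - \EE V_1^2| > t\bigr) \;\le\; C\exp\{-c_1 n\min(t^2,t)\}.
\]
Combined with $|\bar T + 1/2|\le 3/2$ and the $O(1/n)$ remainder, this yields
$\PP(|\hat v^2 - \sigma_V^2| > t) \le C\exp\{-c_2 n\min(t^2,t)\} + C\exp(-c_2 n)$.

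To pass from $\hat v^2$ to $\hat v$, I restrict to the good event $\{\hat v^2 \ge \sigma_V^2/4\}$, whose complement has probability at most $C\exp(-c_3 n)$ (apply the preceding bound with $t = 3\sigma_V^2/4$). On this event the factorization $\hat v/\sigma_V - 1 = (\hat v^2 - \sigma_V^2)/\{\sigma_V(\hat v + \sigma_V)\}$ gives $|\hat v/\sigma_V - 1| \le (2/(3\sigma_V^2))|\hat v^2 - \sigma_V^2|$, so substituting $t = (3\sigma_V^2/2)z$ into the concentration bound and absorbing constants into a single $C_U$ furnishes the three-term estimate: the quadratic Bernstein regime produces the $\exp(-C_U n z^2)$ contribution, the linear Bernstein regime produces the $\exp(-C_U n z)$ contribution (which matters in the moderate-$z$ range before the deterministic bound $|\hat v^2 - \sigma_V^2| \le 1$ renders the event empty), and the bad-event probability supplies the $\exp(-C_U n)$ term. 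The main technical hurdle is the clean application of Bernstein's inequality to the order-three bounded $U$-statistic $K_n$; everything else is routine bookkeeping, and the nondegeneracy $\sigma_V^2>0$ needed in the denominator is guaranteed by the continuity of $F$ together with $U'(\theta)>0$.
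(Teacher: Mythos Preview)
Your proof is correct and takes a genuinely different route from the paper's. The paper introduces the residuals $W_i = T_i - V_i$ and writes
\[
\hat\sigma_\theta^2 - \tilde\sigma_\theta^2 = \frac{4}{n\{U'(\theta)\}^2}\sum_i W_i^2 \;-\; \frac{4|U_n(\theta)-1/2|^2}{\{U'(\theta)\}^2} \;+\; \frac{8}{n\{U'(\theta)\}^2}\sum_i W_i\Bigl(V_i-\tfrac12\Bigr),
\]
with $\tilde\sigma_\theta^2 = 4n^{-1}\{U'(\theta)\}^{-2}\sum_i(V_i-1/2)^2$. It then controls $\sum_i W_i^2$ crudely by $n\max_i W_i^2$ via a conditional Hoeffding bound (conditioning on $\xi_i$), handles the cross term by Cauchy--Schwarz, and bounds $|U_n(\theta)-1/2|$ and $|\tilde\sigma_\theta^2 - \sigma_\theta^2|$ by the $U$-statistic and i.i.d.\ Hoeffding inequalities, respectively. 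Your decomposition instead keeps $T_i$ intact, expands $n^{-1}\sum_i T_i^2$ into the diagonal piece $(n-1)^{-1}\bar T$ plus the symmetric order-three $U$-statistic $K_n$, and applies Hoeffding's partition-averaging reduction to get Bernstein concentration for $K_n$. Your route is arguably cleaner---it avoids the loss from passing to $\max_i W_i^2$ and treats the whole object as a single $U$-statistic---at the price of invoking concentration for an order-three $U$-statistic, which the paper sidesteps entirely. You also make explicit the passage from $|\hat v^2 - \sigma_V^2|$ to $|\hat v/\sigma_V - 1|$ via the good event $\{\hat v^2 \ge \sigma_V^2/4\}$, which is where the $\exp(-C_U n)$ term originates; the paper's write-up leaves this step implicit.
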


\begin{proof}[Proof of Lemma~\ref{Lemma_sample_sigma_consistency}]
For each $i \in [n]$, denote  
\begin{align*}
    W_{i} = \frac{1}{n - 1}\sum_{j \neq i} \mathbb{I}\{\xi_{i} + \xi_{j} \leq 0\} - F(-\xi_{i}).
\end{align*}
Recall that $U_{n}(\theta) = \{n(n - 1)\}^{-1} \sum_{i \neq j \in [n]} \mathbb{I}\{\xi_{i} + \xi_{j} \leq 0\}$. Hence 
\begin{align*}
    \hat{\sigma}_{\theta}^{2} -& \underbrace{\frac{4}{n\{U'(\theta)\}^{2}} \sum_{i = 1}^{n} \left\{F(-\xi_{i}) - \frac{1}{2}\right\}^{2}}_{\tilde{\sigma}_{\theta}^{2}} = \frac{4}{n\{U'(\theta)\}^{2}} \sum_{i = 1}^{n} W_{i}^{2} - \frac{4|U_{n}(\theta) - 1/2|^{2}}{\{U'(\theta)\}^{2}}\cr
    &+ \frac{8}{n\{U'(\theta)\}^{2}} \sum_{i = 1}^{n} W_{i} \left\{F(-\xi_{i}) - \frac{1}{2}\right\} =: \Delta_{\sigma, 1} - \Delta_{\sigma, 2} + \Delta_{\sigma, 3}. 
\end{align*}
Since $\mathbb{E}(\tilde{\sigma}_{\theta}^{2}) = \sigma_{\theta}^{2}$ and $\max_{i \in [n]}|F(-\xi_{i}) - 1/2| \leq 1/2$, it follows from Lemma~\ref{Lemma_Hoeffding_inequality} that 
\begin{align*}
    \mathbb{P} (|\tilde{\sigma}_{\theta}^{2} - \sigma_{\theta}^{2}| > z) \leq 2\exp(-C_{U} n z^{2}),  
\end{align*}
where $C_{U}$ is a positive constant depending only on the density function $U'$.
Note that $\{\mathbb{I}\{\xi_{i} + \xi_{j} \leq 0\} - F(-\xi_{i})\}_{j \neq i}$ is a sequence of independent centered random variables conditional on $\xi_{i}$ for each $i \in [n]$. Hence, by Hoeffding's inequality, we have $\mathbb{P}(\max_{i \in [n]} W_{i}^{2} > z) \leq 2 n \exp(- C n z)$. Therefore 
\begin{align*}
    \mathbb{P}(\Delta_{\sigma, 1} > z) \leq \mathbb{P}\bigg(\frac{4}{\{U'(\theta)\}^{2}} \max_{i \in [n]} W_{i}^{2} > z\bigg) \leq 2n\exp(-C_{U}nz). 
\end{align*}
Consequently, by the Cauchy-Schwarz inequality, we have $|\Delta_{\sigma, 3}|^{2} \leq 4 \tilde{\sigma}_{\theta}^{2} \Delta_{\sigma, 1}$ and 
\begin{align*}
    \mathbb{P}(|\Delta_{\sigma, 4}| > z) \leq 2\exp(-C_{U} n) + 2n \exp(-C_{U} n z^{2}).
\end{align*}
By Lemma~\ref{Lemma_Hoeffding_inequality_U_statistics}, we have $\mathbb{P}(|U_{n}(\theta) - 1/2| > z) \leq 2\exp(-nz^{2})$ and $\mathbb{P}(\Delta_{\sigma, 3} > z) \leq 2 \exp(-C_{U} n z)$. Putting all these pieces together, we obtain~\eqref{eq_concentration_inequality_sigma_estimation}. 
\end{proof}

\subsection{Proof of Theorem~\ref{Theorem_quantile_consistency_one_Bootstrap}}
Define the bootstrap~\emph{U}-process 
\begin{align*}
    U_{n}^{\star}(t) = \frac{1}{B_{n}} \sum_{i \neq j \in [n]} \omega_{i} \omega_{j} \mathbb{I}\left\{\frac{X_{i} + X_{j}}{2} \leq t\right\}, \enspace \mathrm{where} \enspace B_{n} = \sum_{i \neq j \in [n]} \omega_{i} \omega_{j}. 
\end{align*}
Define $W_{n}^{\star}(t) = U_{n}^{\star}(\theta + t) - U(\theta + t)$ and $W_{n}^{\diamond}(t) = U_{n}^{\star}(\theta + t) - U_{n}(\theta + t)$ for $t \in \mathbb{R}$. We first introduce some notation. Let $K = \min\left\{k \in \mathbb{N} : 2^{k} \geq n\right\}$. For each $i \neq j \in [n]$ and $h \in [2^{K}]$, denote 
\begin{align*}
    \mathcal{R}\left(X_{i}, X_{j}, \frac{\Lambda h}{2^{K}}\right) &= R\left(X_{i}, X_{j}, \frac{\Lambda h}{2^{K}}\right) - U_{n}\left(\theta + \frac{\Lambda h}{2^{K}}\right) + U_{n}(\theta)\cr
    &= R\left(X_{i}, X_{j}, \frac{\Lambda h}{2^{K}}\right) - \frac{1}{n(n - 1)} \sum_{i \neq j \in [n]} R\left(X_{i}, X_{j}, \frac{\Lambda h}{2^{K}}\right). 
\end{align*}

\begin{lemma}
\label{Lemma_W_n_star_process}
Let $0 <\Lambda \leq 1/(2\|f\|_{\infty})$. Under $\mathcal{E}_{K} = \{\Gamma_{n} \leq \Gamma_{\diamond}\}$, for any $z \leq Cn$, we have 
\begin{align}
\label{eq_Wn_star_bound}
    \mathbb{P}^{\star}&\bigg\{\sup_{|t| \leq \Lambda} |W_{n}^{\star}(t) - W_{n}^{\star}(0)| > \sup_{|t| \leq \Lambda} |W_{n}(t) - W_{n}(0)| + \frac{\Lambda \|U'\|_{\infty}}{2^{K}} + \frac{C_{1}n(z + K) + C_{2} \sqrt{\Gamma_{\diamond} (z + K)}}{2n^{2}}\bigg\}\cr
    &\leq C \exp(-z) + \exp\left(-\frac{n}{16}\right).
\end{align}
\end{lemma}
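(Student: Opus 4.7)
My plan is to start from the additive identity
$$W_{n}^{\star}(t) - W_{n}^{\star}(0) = \bigl[W_{n}(t) - W_{n}(0)\bigr] + \bigl[W_{n}^{\diamond}(t) - W_{n}^{\diamond}(0)\bigr],$$
where $W_{n}^{\diamond}(t) := U_{n}^{\star}(\theta + t) - U_{n}(\theta + t)$. The triangle inequality then reduces the lemma to proving the conditional bound
$$\sup_{|t|\leq \Lambda}|W_{n}^{\diamond}(t) - W_{n}^{\diamond}(0)| \leq \frac{\Lambda\|U'\|_{\infty}}{2^{K}} + \frac{C_{1}n(z+K) + C_{2}\sqrt{\Gamma_{\diamond}(z+K)}}{2n^{2}}$$
with the stated failure probability.

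Next I would reuse the monotonicity-based dyadic discretization from the proof of Lemma~\ref{Lemma_Oscillation_W_process}. For $t \in (\Lambda(h-1)/2^{K}, \Lambda h/2^{K}]$, monotonicity of $\mathbb{I}\{(X_{i}+X_{j})/2 \leq \theta+t\}$ in $t$ combined with $|U(\theta+\Lambda h/2^{K}) - U(\theta+\Lambda(h-1)/2^{K})| \leq \Lambda\|U'\|_{\infty}/2^{K}$ yields
$$\sup_{|t|\leq\Lambda}|W_{n}^{\diamond}(t) - W_{n}^{\diamond}(0)| \leq \max_{h \in [2^{K}]}\bigl|W_{n}^{\diamond}(\Lambda h/2^{K}) - W_{n}^{\diamond}(0)\bigr| + \frac{\Lambda\|U'\|_{\infty}}{2^{K}},$$
reducing the problem to a maximum over a $2^{K}$-point grid. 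Writing $\omega_{i} = 2B_{i}$ with $B_{i} \sim \mathrm{Bernoulli}(1/2)$ i.i.d.\ and $M = \sum_{i} B_{i} \sim \mathrm{Binomial}(n, 1/2)$, Hoeffding's inequality gives $\mathbb{P}^{\star}(M < n/4) \leq \exp(-n/16)$, whence $B_{n} = 4M(M-1) \geq n^{2}/8$ off this exceptional set. I will condition on this event for the remainder.

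For each grid point $t_{h} = \Lambda h/2^{K}$, set $R_{ij} = R(X_{i}, X_{j}, t_{h}) \in \{0,1\}$ and decompose
$$W_{n}^{\diamond}(t_{h}) - W_{n}^{\diamond}(0) = \frac{1}{B_{n}}\sum_{i \neq j \in [n]}(\omega_{i}\omega_{j} - 1) R_{ij} + \Bigl(\frac{1}{B_{n}} - \frac{1}{n(n-1)}\Bigr)\sum_{i \neq j \in [n]} R_{ij}.$$
The first summand is a bilinear chaos in the centred Bernoulli weights, to which I will apply a Bernstein-type inequality whose sub-Gaussian proxy is $\sum_{i}\bigl(\sum_{j}R_{ij}\bigr)^{2} + \sum_{i\neq j} R_{ij}^{2}$ (bounded by $\Gamma_{\diamond}$ on $\mathcal{E}_{K}$) and whose sub-exponential mass is $\mathcal{O}(1)$ since $|\omega_{i}\omega_{j} - 1|\, R_{ij} \leq 3$; this yields a conditional deviation of order $(\sqrt{\Gamma_{\diamond} z} + z)/n^{2}$. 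The second summand I handle by a Bernstein bound on $B_{n} - n(n-1) = \sum_{i\neq j}(\omega_{i}\omega_{j} - 1)$, which combined with $|\sum R_{ij}| \leq n(n-1)$ produces a term of the same order. A union bound over $h \in [2^{K}]$ finally replaces $z$ by $z + K$ and delivers the advertised bound.

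The main obstacle is the Bernstein step for the bilinear chaos $\sum_{i \neq j}(\omega_{i}\omega_{j} - 1) R_{ij}$: the summands are not independent across pairs sharing an index, so scalar Bernstein does not apply directly. I plan to decouple via a Gin\'e--de la Pe\~{n}a-style inequality that replaces one copy of the weight sequence by an independent copy, and then apply a conditional Bernstein bound in the remaining sequence; this produces the variance proxy $\sum_{i}(\sum_{j}R_{ij})^{2} + \sum_{i\neq j} R_{ij}^{2}$ exactly, which is precisely what the event $\mathcal{E}_{K} = \{\Gamma_{n} \leq \Gamma_{\diamond}\}$ is designed to bound. Tracking the decoupling and symmetrization constants carefully so that the linear-in-$z$ contribution lands at $\mathcal{O}(z/n)$ rather than a cruder $\mathcal{O}(z/\sqrt{n})$ is the delicate part, and is what justifies the form of the right-hand side in the lemma.
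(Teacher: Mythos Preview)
Your overall architecture (additive split into $W_n+W_n^{\diamond}$, dyadic discretization, then a chaos/linear decomposition of the weighted sum) is the right one and matches the paper. But there is a genuine gap in how you connect the variance proxy to $\Gamma_{\diamond}$.

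The quantity $\Gamma_n$, and hence the event $\mathcal{E}_K=\{\Gamma_n\le\Gamma_\diamond\}$, is defined via the \emph{centered} kernel
\[
\mathcal{R}_{ij,h}=R_{ij,h}-\bar R_{n,h},\qquad \bar R_{n,h}=\frac{1}{n(n-1)}\sum_{i\ne j}R_{ij,h}=U_n(\theta+t_h)-U_n(\theta),
\]
so that $\Gamma_n=\max_h\sum_j\bigl(\sum_{i\ne j}\mathcal{R}_{ij,h}\bigr)^2$. Your proposed proxy uses the uncentered $R_{ij,h}$; the two differ by $\sum_j\bigl(\sum_{i\ne j}R_{ij,h}\bigr)^2=\Gamma_n+n(n-1)^2\bar R_{n,h}^{\,2}$, and the extra term $n(n-1)^2\bar R_{n,h}^{\,2}$ is not controlled by $\mathcal{E}_K$. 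Likewise, bounding your normalization correction via $|\sum R_{ij,h}|\le n(n-1)$ yields a contribution of order $\sqrt{z/n}$, which is strictly larger than the stated $\sqrt{\Gamma_\diamond z}/n^2$ when $\Gamma_\diamond\ll n^3$ (the regime that matters downstream).

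Both issues disappear if you center first. Because $\sum_{i\ne j}\mathcal{R}_{ij,h}=0$, one has the exact identity
\[
W_n^{\diamond}(t_h)-W_n^{\diamond}(0)=\frac{1}{B_n}\sum_{i\ne j}(\omega_i\omega_j-1)\,\mathcal{R}_{ij,h},
\]
so your normalization term vanishes. Expanding $\omega_i\omega_j-1=(\omega_i-1)(\omega_j-1)+(\omega_i-1)+(\omega_j-1)$ then gives a degenerate $U$-statistic in $\omega$ with bounded kernel (handled by Theorem~3.3 of Gin\'e et al.\ (2000), producing the $C_1n(z+K)/2n^2$ term) plus a linear sum $\tfrac{2}{B_n}\sum_j\bigl(\sum_{i\ne j}\mathcal{R}_{ij,h}\bigr)(\omega_j-1)$ whose sub-Gaussian proxy is exactly $\Gamma_n\le\Gamma_\diamond$ on $\mathcal{E}_K$ (producing the $C_2\sqrt{\Gamma_\diamond(z+K)}/2n^2$ term). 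This is precisely the paper's route; no decoupling is needed.

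A smaller point: discretizing $W_n^{\diamond}$ directly, as you propose, gives a grid error $U_n(\theta+t_h)-U_n(\theta+t_{h-1})$, not $\Lambda\|U'\|_\infty/2^K$; this empirical increment carries an extra $W_n$-oscillation term and would inflate the coefficient on $\sup|W_n(t)-W_n(0)|$ beyond $1$. The paper avoids this by discretizing $W_n^{\star}$ (which compares $U_n^{\star}$ to $U$, not $U_n$) and only afterwards splitting off $W_n$ at the grid point.
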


\begin{proof}[Proof of Lemma~\ref{Lemma_W_n_star_process}]
For any $h \in [2^{K}]$ and $t \in (\Lambda(h - 1)2^{-K}, \Lambda h 2^{-K}]$, we have 
\begin{align*}
    W_{n}^{\star}(t) - W_{n}^{\star}(0) &= U_{n}^{\star}(\theta + t) - U(\theta + t) - \{U_{n}^{\star}(\theta) - U(\theta)\}\cr
    &\leq U_{n}^{\star}\left(\theta + \frac{\Lambda h}{2^{K}}\right) - U\left(\theta + \frac{\Lambda(h - 1)}{2^{K}}\right) - \{U_{n}^{\star}(\theta) - U(\theta)\}\cr
    &\leq W_{n}^{\star}\left(\frac{\Lambda h}{2^{K}}\right) - W_{n}^{\star}(0) + \frac{\Lambda \|U'\|_{\infty}}{2^{K}}\cr
    &\leq W_{n}^{\diamond}\left(\frac{\Lambda h}{2^{K}}\right) - W_{n}^{\diamond}(0) + \sup_{|t| \leq \Lambda} |W_{n}(t) - W_{n}(0)| + \frac{\Lambda \|U'\|_{\infty}}{2^{K}} 
\end{align*}
Similarly, we have 
\begin{align*}
    W_{n}^{\star}(t) - W_{n}^{\star}(0) \geq W_{n}^{\diamond}\left(\frac{\Lambda(h - 1)}{2^{K}}\right) - W_{n}^{\diamond}(0) - \sup_{|t| \leq \Lambda}|W_{n}(t) - W_{n}(0)| - \frac{\Lambda \|U'\|_{\infty}}{2^{K}}.
\end{align*}
Consequently, we have 
\begin{align*}
    \sup_{0 < t \leq \Lambda} |W_{n}^{\star}(t) - W_{n}(0)| \leq \max_{h \in [2^{K}]} \bigg|W_{n}^{\diamond}\bigg(\frac{\Lambda h}{2^{K}}\bigg) - W_{n}^{\diamond}(0)\bigg| + \sup_{|t| \leq \Lambda} |W_{n}(t) - W_{n}(0)| + \frac{\Lambda \|U'\|_{\infty}}{2^{K}}
\end{align*}
Hence it suffices to upper bound $\max_{h \in [2^{K}]} |W_{n}^{\diamond}(\Lambda h/2^{K}) - W_{n}^{\diamond}(0)|$. For each $h \in [2^{K}]$, we have 
\begin{align*}
    W_{n}^{\diamond}\bigg(\frac{\Lambda h}{2^{K}}\bigg) - W_{n}^{\diamond}(0) &= \frac{1}{N} \sum_{i \neq j \in [n]} \mathcal{R}\left(X_{i}, X_{j}, \frac{\Lambda h}{2^{K}}\right) (\omega_{i} \omega_{j} - 1)\cr
    &= \frac{1}{N} \sum_{i \neq j \in [n]} \mathcal{R}\left(X_{i}, X_{j}, \frac{\Lambda h}{2^{K}}\right) (\omega_{i} - 1) (\omega_{j} - 1)\cr
    &+ \frac{2}{N} \sum_{j = 1}^{n} \bigg\{\sum_{i \neq j} \mathcal{R}\left(X_{i}, X_{j}, \frac{\Lambda h}{2^{K}}\right)\bigg\} (\omega_{j} - 1)\cr
    &=: \Delta_{W, h}^{\natural} + \Delta_{W, h}^{\sharp}.
\end{align*}
Recall that $B_{n} = \sum_{i \neq j \in [n]} \omega_{i} \omega_{j}$, where $\omega_{1}, \ldots, \omega_{n}$ are i.i.d.~random variables such that $0\leq \omega_{i}\leq 2$ and $\mathbb{E}(\omega_{i}) = 1$ for each $i \in [n]$. Hence, it follows from Lemma~\ref{Lemma_Hoeffding_inequality_U_statistics} that $\mathbb{P}(B_{n} > 2n(n - 1)) \leq \exp(-n/16)$. We first upper bound $\max_{h \in [2^{K}]}|\Delta_{W, h}^{\natural}|$. Observe that $\Delta_{W, h}^{\natural}$ is a degenerate second order~\emph{U}-statistic with  
\begin{align*}
    \max_{i\neq j\in [n]}\max_{h \in [2^{K}]} \bigg|\mathcal{R}\bigg(X_{i}, X_{j}, \frac{\Lambda h}{2^{K}}\bigg)\bigg| \leq 1.
\end{align*}
Hence, by Theorem 3.3 in~\citet{gine2000exponential},
\begin{align*}
    \mathbb{P}^{\star}\bigg(\max_{h \in [2^{K}]} |\Delta_{W, h}^{\natural}| > \frac{z}{2n^{2}}\bigg) \leq C 2^{K} \exp\bigg[-\frac{1}{C}\min\bigg\{\frac{z^{2}}{n^{2}}, \frac{z}{n}, \bigg(\frac{z}{\sqrt{n}}\bigg)^{2/3}, \sqrt{z}\bigg\}\bigg] + \exp\left(-\frac{n}{16}\right).
\end{align*}
We now upper bound $\max_{h \in [2^{K}]} |\Delta_{W, h}^{\sharp}|$. For simplicity of notation, denote $R_{ij, h} = R (X_{i}, X_{j}, \Lambda h/2^{K})$ and $\bar{R}_{n, h} = \{n(n - 1)\}^{-1} \sum_{i \neq j \in [n]} R_{ij, h}$. Define  
\begin{align}
\label{eq_definition_Gamma_n}
    \Gamma_{n} = \max_{h \in [2^{K}]}\sum_{j = 1}^{n} \bigg|\sum_{i \neq j} \mathcal{R}\bigg(X_{i}, X_{j}, \frac{\Lambda h}{2^{K}}\bigg)\bigg|^{2}.
\end{align}
Hence, under $\mathcal{E}_{K}$, by Theorem 1.1 in~\citet{bentkus2015tight}, it follows that 
\begin{align*}
    \mathbb{P}^{\star}\bigg\{\max_{h \in [2^{K}]} |\Delta_{W, h}^{\sharp}| > \frac{C_{1}\sqrt{\Gamma_{\diamond}(z + K)}}{2n^{2}}\bigg\} \leq C_{2}\exp(-z) + \exp\left(-\frac{n}{16}\right).
\end{align*}
Putting all these pieces together, we obtain~\eqref{eq_Wn_star_bound}.
\end{proof}

\begin{lemma}
\label{Lemma_concentration_Gamma_n}
Let $\Gamma_{n}$ be defined in~\eqref{eq_definition_Gamma_n}. Assume that $\Lambda \leq 1/(2\|f\|_{\infty})$ and $\log n \leq n\Lambda \|f\|_{\infty}$. Then we have 
\begin{align}
\label{eq_upper_bound_Gamma}
    \mathbb{P}(\Gamma_{n} \leq C n^{3}\Lambda^{2} \|f\|_{\infty}^{2}) \geq 1 - C_{1}\exp(-n\Lambda \|f\|_{\infty}). 
\end{align}
\end{lemma}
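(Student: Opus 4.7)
The plan is to use monotonicity in $t$ to eliminate the maximum over $h$ in $\Gamma_n$, and then reduce the resulting sum of squares via the elementary inequality $\sum_j S_j^2 \le (\max_j S_j)(\sum_j S_j)$, which turns a second-moment concentration problem into two first-moment problems, each of which is tractable at the desired probability level.

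Concretely, writing $S_j(t) := \sum_{i\ne j} R(X_i, X_j, t)$ and $\bar R_n(t) := U_n(\theta + t) - U_n(\theta)$, both quantities are non-decreasing in $t$ since $R(X_i, X_j, t)$ is. Using the identity $\sum_{i\ne j}\mathcal{R}(X_i, X_j, t) = S_j(t) - (n-1)\bar R_n(t)$ together with $(a-b)^2 \le 2a^2 + 2b^2$, one obtains
\begin{align*}
    \Gamma_n \le \sum_{j = 1}^{n} \max_{h \in [2^K]} \{S_j(\Lambda h/2^K) - (n-1)\bar R_n(\Lambda h/2^K)\}^2 \le 2 \sum_{j = 1}^{n} S_j(\Lambda)^2 + 2 n(n-1)^2 \bar R_n(\Lambda)^2
\end{align*}
deterministically, thereby eliminating the dyadic grid. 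Next, using that $\|U'\|_\infty \le 2\|f\|_\infty$ so $\EE \bar R_n(\Lambda) \le 2\Lambda\|f\|_\infty$, Bernstein's inequality for the bounded U-statistic $\bar R_n(\Lambda)$ (whose variance is of order $\Lambda\|f\|_\infty$) gives $\bar R_n(\Lambda) \le C\Lambda\|f\|_\infty$ with probability at least $1 - \exp(-cn\Lambda\|f\|_\infty)$. For $S_j(\Lambda)$, which is, conditional on $X_j$, a sum of $n-1$ independent Bernoulli variables with success probability $q(X_j, \Lambda) := F(\theta - X_j + 2\Lambda) - F(\theta - X_j) \le 2\Lambda\|f\|_\infty$, classical Bernstein yields $\PP(S_j(\Lambda) > Cn\Lambda\|f\|_\infty \mid X_j) \le \exp(-cn\Lambda\|f\|_\infty)$; a union bound over $j \in [n]$, in which the factor $n$ is absorbed into the exponent via the standing assumption $\log n \le n\Lambda\|f\|_\infty$, then gives $\max_j S_j(\Lambda) \le Cn\Lambda\|f\|_\infty$ on an event of the same probability.

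On the intersection of these two events, the identity $\sum_{j=1}^n S_j(\Lambda) = \sum_{(i,j):\, i\ne j} R(X_i, X_j, \Lambda) = n(n-1)\bar R_n(\Lambda)$ together with $\sum_j S_j^2 \le (\max_j S_j)(\sum_j S_j)$ yields $\sum_{j=1}^n S_j(\Lambda)^2 \le Cn\Lambda\|f\|_\infty \cdot Cn(n-1)\Lambda\|f\|_\infty \le C' n^3 \Lambda^2 \|f\|_\infty^2$, while $n(n-1)^2 \bar R_n(\Lambda)^2 \le C n^3 \Lambda^2 \|f\|_\infty^2$ by the preceding bound on $\bar R_n$; together these give~\eqref{eq_upper_bound_Gamma}. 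The main obstacle here is securing the sharp probability level $1 - \exp(-n\Lambda\|f\|_\infty)$: a direct U-statistic concentration attack on $\sum_j S_j^2$ would produce an exponent scaling only like $n\Lambda^2\|f\|_\infty^2$, which is insufficient as soon as $\Lambda\|f\|_\infty$ is small. The factorisation $\sum_j S_j^2 \le (\max_j S_j)(\sum_j S_j)$ is the decisive device, and the union-bound slack afforded by $\log n \le n\Lambda\|f\|_\infty$ is precisely what makes the maximum in the second step tractable.
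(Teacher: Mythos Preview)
Your proof is correct and follows a genuinely different route from the paper's. The paper does not exploit monotonicity to collapse the dyadic grid; instead it keeps the $\max_{h\in[2^K]}$ and centers $S_j(\Lambda h/2^K)$ at its conditional mean $\mathbb{E}(R_{ij,h}\mid X_j)$, splitting $\Gamma_n$ into three pieces $\Gamma_{n,1},\Gamma_{n,2},\Gamma_{n,3}$ (deviation from conditional mean, variation of the conditional mean, and the centered $U$-statistic squared), and then applies Bernstein, Hoeffding, and the oscillation Lemma~\ref{Lemma_Oscillation_W_process} respectively, absorbing the $2^K\le n$ grid points via union bound. Your two devices---replacing the whole grid by the endpoint $\Lambda$ via monotonicity of $S_j(\cdot)$ and $\bar R_n(\cdot)$, and then factoring $\sum_j S_j^2\le(\max_j S_j)(\sum_j S_j)$ with the identity $\sum_j S_j(\Lambda)=n(n-1)\bar R_n(\Lambda)$---bypass the chaining and the appeal to Lemma~\ref{Lemma_Oscillation_W_process} entirely, yielding a shorter and more elementary argument. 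The paper's decomposition, on the other hand, is closer in spirit to a Hoeffding projection and would generalize more readily to settings where monotonicity in $t$ is unavailable. Both routes land on the same bound with the same exponent $n\Lambda\|f\|_\infty$, and in both the hypothesis $\log n\le n\Lambda\|f\|_\infty$ is what permits the union bound over $j\in[n]$ (in your $\max_j S_j$ step; in the paper's bound on $\Gamma_{n,1}$).
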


\begin{proof}[Proof of Lemma~\ref{Lemma_concentration_Gamma_n}]
By the triangle inequality, 
\begin{align*}
    \Gamma_{n} &\lesssim \max_{h \in [2^{K}]} \sum_{j = 1}^{n} \bigg|\sum_{i \neq j} \{R_{ij, h} - \mathbb{E} (R_{ij, h}|X_{j})\}\bigg|^{2} + \max_{h \in [2^{K}]} n^{2} \sum_{j = 1}^{n} |\mathbb{E}(R_{ij, h}|X_{j}) - \mathbb{E}(R_{ij, h})|^{2}\cr
    &+ \max_{h \in [2^{K}]} n^{3} |\bar{R}_{n, h} - \mathbb{E}(\bar{R}_{n, h})|^{2} =: \Gamma_{n, 1} + \Gamma_{n, 2} + \Gamma_{n, 3}.
\end{align*}
For each $j \in [n]$, conditional on $X_{j}$, $\{R_{ij, h} - \mathbb{E}(R_{ij, h}|X_{j})\}_{i \neq j}$ are independent centered random variables with  
\begin{align*}
    \max_{h \in [2^{K}]} \max_{j \in [n]} |R_{ij, h} - \mathbb{E}(R_{ij, h}|X_{j})| \leq 1 \enspace \mathrm{and} \enspace \max_{h \in [2^{K}]} \max_{j \in [n]} \mathbb{E}\{|R_{ij, h} - \mathbb{E}(R_{ij, h}|X_{j})|^{2}|X_{j}\} \leq \Lambda \|f\|_{\infty}. 
\end{align*}
Then, by the Bernstein inequality, with probability at least $1 - C\exp(-z)$, we have 
\begin{align*}
    \Gamma_{n, 1} \leq n \max_{h \in [2^{K}]} \max_{j \in [n]} \bigg|\sum_{i \neq j} \{R_{ij, h} - \mathbb{E} (R_{ij, h}|X_{j})\}\bigg|^{2} \leq C_{1}\{n (z + \log n)^{2} + n^{2} \Lambda \|f\|_{\infty} (z + \log n)\}. 
\end{align*}
We now bound $\Gamma_{n, 2}$. Observe that $\{|\mathbb{E}(R_{ij, h}|X_{j}) - \mathbb{E}(R_{ij, h})|^{2}\}_{j \in [n]}$ are independent random variables with 
\begin{align*}
    \max_{h \in [2^{K}]} \max_{j \in [n]} |\mathbb{E}(R_{ij, h}|X_{j}) - \mathbb{E}(R_{ij, h})|^{2} \leq \Lambda^{2} \|f\|_{\infty}^{2}.
\end{align*}
Hence, it follows from Lemma~\ref{Lemma_Hoeffding_inequality} that  
\begin{align*}
    \PP\left[\Gamma_{n, 2} \leq C\left\{n^{3} \Lambda^{2} \|f\|_{\infty}^{2} + n^{5/2} \Lambda^{2}  \|f\|_{\infty}^{2} \sqrt{(z + \log n)}\right\}\right] \geq 1 - \exp(-z).
\end{align*}
Recall that $W_{n}(t) = \{n(n - 1)\}^{-1}\sum_{i \neq j \in [n]} R(X_{i}, X_{j}, t)$. Hence $\bar{R}_{n, h} - \mathbb{E}(\bar{R}_{n, h}) = W_{n}(\Lambda h/2^{K}) - W_{n}(0)$ for each $h \in [2^{K}]$. Therefore, by Lemma~\ref{Lemma_Oscillation_W_process}, with probability at least $1 - C\exp(-z)$, we have 
\begin{align*}
    \Gamma_{n, 3} \leq C n^{3}\bigg\{\bigg(\frac{K + z}{n}\bigg)^{2} + \Lambda \|f\|_{\infty}\sqrt{\frac{z + c}{n}} + \frac{K + z}{n} \sqrt{\Lambda \|f\|_{\infty}}\bigg\}^{2}. 
\end{align*}
Putting all these pieces together, we obtain~\eqref{eq_upper_bound_Gamma}.
\end{proof}

\begin{proof}[Proof of Theorem~\ref{Theorem_quantile_consistency_one_Bootstrap}]
By Theorem~\ref{Theorem_quantile_consistency_one}, for any $\omega > 0$ such that $\omega \leq n \kappa_{0}^{2} c_{0}^{2}$, we have 
\begin{align*}
    \mathbb{P}\left(|\hat{\theta} - \theta| > \sqrt{\omega/(n\kappa_{0}^{2})}\right) \leq 2\exp(-\omega). 
\end{align*}
Combined with Lemma~\ref{Lemma_Oscillation_W_process}, for any $z > 0$ such that $z \leq 1/(4\|f\|_{\infty})$, with probability at least $1 - C_{1}\exp(-\omega)$, we have
\begin{align*}
    |U_{n}(\hat{\theta} &+ z) - U_{n}(\hat{\theta}) - \{U(\hat{\theta} + z) - U(\hat{\theta})\}|\cr
    &\leq |U_{n}(\hat{\theta} + z) - U_{n}(\theta) - \{U(\hat{\theta} + z) - U(\theta)\}|\cr
    &+ |U_{n}(\hat{\theta}) - U_{n}(\theta) - \{U(\hat{\theta}) - U(\theta)\}|\cr
    &\leq C_{2} \bigg(\|f\|_{\infty} z \sqrt{\frac{\omega + C}{n}} + \frac{K + \omega}{n} \sqrt{\|f\|_{\infty} z} + \frac{K + \omega}{n\kappa_{0}} \|f\|_{\infty}\bigg). 
\end{align*}
Similar to the proof of Theorem~\ref{Theorem_quantile_consistency_one}, for any $z \geq 0$ such that $\hat{\theta}^{\star} > \hat{\theta} + z$, we have 
\begin{align*}
    \frac{1}{2} &> U_{n}^{\star}(\hat{\theta} + z) \geq U_{n}(\hat{\theta} + z) - |U_{n}^{\star}(\hat{\theta} + z) - U_{n}(\hat{\theta} + z)|\cr
    &\geq U_{n}(\hat{\theta}) + \left\{U(\hat{\theta} + z) - U(\hat{\theta})\right\} - |U_{n}^{\star}(\hat{\theta} + z) - U_{n}(\hat{\theta} + z)|\cr
    &-|U_{n}(\hat{\theta} + z) - U_{n}(\hat{\theta}) - \{U(\hat{\theta} + z) - U(\hat{\theta})\}|\cr
    &\geq \frac{1}{2} + \frac{\kappa_{0}z}{2} - |U_{n}^{\star}(\hat{\theta} + z) - U_{n}(\hat{\theta} + z)| - C_{2} \left(\frac{K + \omega}{n} \sqrt{\|f\|_{\infty} z} + \frac{K + \omega}{n\kappa_{0}} \|f\|_{\infty}\right)\cr
    &\geq \frac{1}{2} + \frac{\kappa_{0}z}{2} - |U_{n}^{\star}(\hat{\theta} + z) - U_{n}(\hat{\theta} + z)| - \frac{2C_{2}(K + \omega)}{n\kappa_{0}} \|f\|_{\infty}. 
\end{align*}
Then it follows from Lemma~\ref{Lemma_Hoeffding_inequality} that
\begin{align*}
    \mathbb{P}^{\star} \left\{|\hat{\theta}^{\star} - \hat{\theta}| > \frac{2}{\kappa_{0}}\sqrt{\frac{z}{n}} + \frac{4C_{2}\|f\|_{\infty}(K + \omega)}{n\kappa_{0}^{2}}\right\} \leq 2 \exp(-C_{3}z) + \exp\left(-\frac{n}{16}\right). 
\end{align*}
By Lemma~\ref{Lemma_W_n_star_process} and Lemma~\ref{Lemma_concentration_Gamma_n}, with probability at least $1 - C\exp(-\omega)$, we have  
\begin{align*}
    \mathbb{P}^{\star}\bigg\{|U_{n}^{\star}(\hat{\theta}^{\star}) - U(\hat{\theta}^{\star}) - \{U_{n}^{\star}(\theta) - U(\theta)\}| > \frac{\mathcal{C}(K + z + \omega)}{n}\bigg\} \leq C \exp(-z) + \exp\left(-\frac{n}{16}\right),
\end{align*}
where $\mathcal{C}$ is a positive constant depending only on $\kappa_{0}, \|f\|_{\infty}$ and $c_{0}$. Therefore, combined with the fact that $\sup_{|\delta| \leq c_{1}} |U''(\theta + \delta)| \leq \kappa_{1}$, we obtain 
\begin{align*}
    \bigg|\hat{\theta}^{\star} - \theta - \frac{1}{B_{n} U'(\theta)} \sum_{i \neq j \in [n]} \omega_{i} \omega_{j} \left(\mathbb{I}\{\xi_{i} + \xi_{j} \leq 0\} - \frac{1}{2}\right)\bigg| \leq \frac{\mathcal{C}(K + z + \omega)}{n}.
\end{align*}
Combining these with Theorem~\ref{Theorem_quantile_consistency_one}, we obtain~\eqref{eq_Bahadur_bootstrap_one_sample}.  
\end{proof}

\begin{proof}[Proof of~\eqref{eq_CLT_bootstrap_one_sample}]
For each $i \in [n]$, denote 
\begin{align*}
    \mathcal{D}_{i} = \frac{2}{U'(\theta)}\bigg(\frac{1}{n - 1} \sum_{j \neq i} \mathbb{I}\{\xi_{i} + \xi_{j} \leq 0\} - U_{n}(\theta)\bigg). 
\end{align*}
Then if follows from~\eqref{eq_Bahadur_bootstrap_one_sample} that with probability at least $1 - C \exp(-\omega)$, we have 
\begin{align}
\label{eq_nonasymptotic_Bahadur_bootstrap_one_sample_log}
    \mathbb{P}^{\star}\bigg\{\bigg|\hat{\theta}^{\star} - \hat{\theta} - \frac{1}{n} \sum_{i = 1}^{n} \mathcal{D}_{i}(\omega_{i} - 1)\bigg| > \frac{C(\omega + \log n)}{n}\bigg\} \leq Cn^{-c}.
\end{align}
By Lemma~\ref{Lemma_sample_sigma_consistency}, we have $\mathbb{P}(\hat{\sigma}_{\theta} \geq \sigma_{\theta}/2) \geq 1 - C\exp(-n)$. Consequently, by Berry-Esseen theorem, with probability at least $1 - C\exp(-n)$, we have  
\begin{align*}
    \sup_{z \in \RR} \bigg|\PP^{\star}\bigg(\frac{1}{\sqrt{n} \hat{\sigma}_{\theta}} \sum_{i = 1}^{n} \mathcal{D}_{i} (\omega_{i} - 1) \leq z\bigg) - \Phi(z)\bigg| \leq \frac{C}{\sqrt{n}}.
\end{align*}
Combining this with~\eqref{eq_nonasymptotic_Bahadur_bootstrap_one_sample_log} yields~\eqref{eq_CLT_bootstrap_one_sample}.  
\end{proof}

\subsection{Proof of Theorem~\ref{Theorem_Bootstrap_nonasymptotic_two_sample}}
Based on Theorem~\ref{Theorem_quantile_consistency_two}, the proof of Theorem~\ref{Theorem_Bootstrap_nonasymptotic_two_sample} can be derived by following the proof of Theorem~\ref{Theorem_quantile_consistency_one_Bootstrap}. Thus we omit the details.

\section{Proof of Results in \S\ref{large-scale-one}}
\subsection{Proof of Theorem~\ref{Theorem_GA_one}}
\begin{proof}[Proof of Theorem~\ref{Theorem_GA_one}]
For each $i \in [n]$ and $\ell \in [p]$, denote $D_{i\ell} = \{1 - 2F_{\ell}(-\xi_{i\ell})\}/U_{\ell}'(\theta_{\ell})$. Define $\bar{\bD}_{n} = (\bar{D}_{n1}, \ldots, \bar{D}_{np})^{\top} \in \RR^{p}$, where $\bar{D}_{n\ell} = n^{-1} \sum_{i = 1}^{n} D_{i\ell}$ for each $\ell \in [p]$. Note that $|D_{i\ell}| \leq 1/\kappa_{0}$ uniformly for $i \in [n]$ and $\ell \in [p]$. Hence, it follows from Theorem 2.1 in~\citet{CCKK2022} that  
\begin{align*}
    \sup_{z \in \RR} |\PP(\sqrt{n}\|\bar{\bD}_{n}\|_{\infty} \leq z) - \PP(\|\bZ\|_{\infty} \leq z)| \leq \frac{C\log^{5/4}(pn)}{n^{1/4}}. 
\end{align*}
Let $\delta^{\sharp} = C \log(pn)/\sqrt{n}$. By Theorem~\ref{Theorem_quantile_consistency_one}, it follows that 
\begin{align*}
    \sup_{z \in \RR} &|\PP(\sqrt{n}\|\hat{\btheta} - \btheta\|_{\infty} \leq z) - \PP(\sqrt{n} \|\bar{\bD}_{n}\|_{\infty} \leq z)|\cr
    &\leq \PP(\sqrt{n}\|\hat{\btheta} - \btheta - \bar{\bD}_{n}\|_{\infty} > \delta^{\sharp}) + \sup_{z \in \RR} \PP(z < \sqrt{n}\|\bar{\bD}_{n}\|_{\infty} \leq z + \delta^{\sharp})\cr
    &\lesssim n^{-c} + \delta^{\sharp} \sqrt{\log p} + \frac{\log^{5/4}(pn)}{n^{1/4}} \leq \frac{C\log^{5/4}(pn)}{n^{1/4}}.
\end{align*}
Putting all these pieces together, we obtain~\eqref{eq_GA_one_sample}.
\end{proof}

\subsection{Proof of Theorem~\ref{Theorem_Bootstrap_consistency_high_dimensional_GA_one_sample}}
\begin{proof}[Proof of Theorem~\ref{Theorem_Bootstrap_consistency_high_dimensional_GA_one_sample}]
For each $\ell \in [p]$ and $i \in [n]$, denote 
\begin{align*}
    \mathcal{D}_{i\ell} = \frac{2}{\sigma_{\ell}U_{\ell}'(\theta_{\ell})}\bigg(\frac{1}{n - 1}\sum_{j\neq i} \mathbb{I}\{\xi_{i\ell} + \xi_{j\ell} \leq 0\} - U_{n\ell}(\theta_{\ell})\bigg). 
\end{align*}
Let $\bar{\bm{\mathcal{D}}}_{n} = (\bar{\mathcal{D}}_{n1}, \ldots, \bar{\mathcal{D}}_{np})^{\top}$, where $\bar{\mathcal{D}}_{n\ell} = n^{-1} \sum_{i = 1}^{n} \mathcal{D}_{i\ell}$ for each $\ell \in [p]$. By the triangle inequality, 
\begin{align*}
    \max_{\ell \in [p]} \sum_{i = 1}^{n}(D_{i\ell} &- \mathcal{D}_{i\ell})^{2} \lesssim \max_{\ell \in [p]} \sum_{i = 1}^{n} \bigg(\frac{1}{n - 1} \sum_{j \neq i} \mathbb{I}\{\xi_{i\ell} + \xi_{j\ell} \leq 0\} - F_{\ell}(-\xi_{i\ell})\bigg)^{2} + n \max_{\ell \in [p]} \bigg|U_{n\ell}(\theta_{\ell}) - \frac{1}{2}\bigg|^{2}\cr
    &\leq n \max_{\ell \in [p]} \max_{i \in [n]} \bigg|\frac{1}{n - 1} \sum_{j \neq i} \mathbb{I}\{\xi_{i\ell} + \xi_{j\ell} \leq 0\} - F_{\ell}(-\xi_{i\ell})\bigg|^{2} + n \max_{\ell \in [p]} \bigg|U_{n\ell}(\theta_{\ell}) - \frac{1}{2}\bigg|^{2}\cr
    &=: \Delta_{D}^{\diamond} + \Delta_{D}^{\circ}. 
\end{align*}
By Lemma~\ref{Lemma_Hoeffding_inequality}, it follows that $\PP\{\Delta_{D}^{\diamond} > C\log (np)\} \lesssim (np)^{-c}$. Recall that
\begin{align*}
    U_{\ell}(\theta_{\ell}) = \frac{1}{n(n - 1)} \sum_{i \neq j \in [n]} \mathbb{I}\{\xi_{i\ell} + \xi_{j\ell} \leq 0\}, \enspace \ell \in [p]. 
\end{align*}
Hence, it follows from Lemma~\ref{Lemma_Hoeffding_inequality_U_statistics} that $\PP\{\Delta_{D}^{\circ} > C\log(np)\} \lesssim (np)^{-c}$. Consequently, for any $\delta > 0$, by Theorem 1.1 in~\citet{bentkus2015tight}, with probability at least $1 - C(np)^{-c}$, we have 
\begin{align*}
    \PP^{\star}\bigg(\max_{\ell \in [p]} \bigg|\sum_{i = 1}^{n}(D_{i\ell} - \mathcal{D}_{i\ell})(\omega_{i} - 1)\bigg| > \delta\bigg) \lesssim p \exp\left(-\frac{C\delta^{2}}{\log (np)}\right).
\end{align*}
Combined with Lemma, with probability at least $1 - C(np)^{-c}$, we have 
\begin{align*}
    \sup_{z \in \RR} &\bigg|\PP^{\star}\bigg(\max_{\ell \in [p]} \bigg|\sum_{i = 1}^{n} D_{i\ell}(\omega_{i} - 1)\bigg| \leq z\bigg) - \PP^{\star}\bigg(\max_{\ell \in [p]} \bigg|\sum_{i = 1}^{n} \mathcal{D}_{i\ell} (\omega_{i} - 1)\bigg| \leq z\bigg)\bigg|\cr
    &\lesssim p \exp\{-C \log(np)\} + \frac{\log (np)\sqrt{\log p}}{\sqrt{n}} \leq \frac{\log (np)\sqrt{\log p}}{\sqrt{n}}. 
\end{align*}
\end{proof}

\subsection{Proof of Theorem~\ref{Theorem_FDP_consistency}}
\begin{proof}[Proof of Theorem~\ref{Theorem_FDP_consistency}]
Following the proof of Theorem~\ref{Theorem_Cramer_one}, it follows from Theorem~\ref{Theorem_quantile_consistency_one_Bootstrap} that with probability at least $1 - Cn^{-c}$, we have 
\begin{align}
\label{eq_Moderate_deviation_uniform}
    \frac{\mathbb{P}^{\star}\{\sqrt{n}(\hat{\theta}_{\ell}^{\star} - \hat{\theta}_{\ell}) > z\}}{2 \{1 - \Phi(z/\sigma_{\ell})\}} = 1 + o(1), 
\end{align}
uniformly for $0 < z\leq o(n^{1/6})$ and $\ell \in [p]$. By Lemma 1 in~\citet{Storey2004}, it follows that 
\begin{align*}
    t_{S} = \left\{t \in [0, 1] : t \leq \frac{\alpha \max(\sum_{\ell = 1}^{p} \mathbb{I}\{P_{\ell} \leq t\}, 1)}{p}\right\}.
\end{align*}
By the definition of $t_{S}$, we have 
\begin{align}\label{eq_tS}
    t_{S} = \frac{\alpha\max(\sum_{\ell = 1}^{p} \mathbb{I}\{P_{\ell} \leq t_{S}\}, 1)}{p}.
\end{align}
Observe that $t_{S} \geq \alpha/p$. Hence, it follows from~\eqref{eq_Moderate_deviation_uniform} that $\mathbb{P}\{t_{S} > \max_{\ell \in [p]} \mathcal{G}_{\ell}^{\star}(\sigma_{\ell}(2\log p)^{1/2})\} \to 1$. Combining this with~\eqref{eq_tS} yields
\begin{align*}
    t_{S} \geq \frac{\alpha}{p} \sum_{\ell = 1}^{p} &\mathbb{I}\{P_{\ell} \leq t_{S}\} \geq \frac{\alpha}{p} \sum_{\ell = 1}^{p} \mathbb{I}\left\{\mathcal{G}_{\ell}^{\star}(\sqrt{n}|\hat{\theta}_{\ell}|) \leq \mathcal{G}_{\ell}^{\star}\left(\sigma_{\ell}\sqrt{2\log p}\right)\right\}\cr
    &\geq \frac{\alpha}{p} \sum_{\ell = 1}^{p} \mathbb{I}\left\{\sqrt{n}|\hat{\theta}_{\ell}| \geq \sigma_{\ell}\sqrt{2\log p}\right\}\cr
    &\geq \frac{\alpha}{p} \sum_{\ell = 1}^{p} \mathbb{I}\bigg\{\frac{|\theta_{\ell}|}{\sigma_{\ell}} \geq \sqrt{2\log p} + \max_{\ell \in [p]} \frac{\sqrt{n}(\hat{\theta}_{\ell} - \theta_{\ell})}{\sigma_{\ell}}\bigg\}.
\end{align*}
For some $\lambda > 0$, define 
\begin{align*}
    \mathcal{E}_{\lambda} = \bigg\{\sqrt{n} \max_{\ell \in [p]} \frac{|\hat{\theta}_{\ell} - \theta_{\ell}|}{\sigma_{\ell}} \leq (1 + \lambda) \sqrt{2\log p}\bigg\}.
\end{align*}
Under $\mathcal{E}_{\lambda}$, it is straightforward to verify that 
\begin{align*}
    t_{S} \geq \frac{\alpha}{p} \sum_{\ell = 1}^{p} \mathbb{I}\left\{\frac{|\theta_{\ell}|}{\sigma_{\ell}} \geq (2 + \lambda)\sqrt{2\log p}\right\}
\end{align*}
By theorem~\ref{Theorem_Cramer_one}, it follows that $\mathbb{P}(\mathcal{E}_{\lambda}^{c}) \leq C p \exp\{-(1 + \lambda)^{2} \log p\} \leq C p^{-\lambda^{2}-2\lambda}$. Then, following the proof of Theorem 3.3 in~\citet{Zhou2018}, for any sequence $1\leq b_{p} \to \infty$, it is straightforward to derive that 
\begin{align*}
    \sup_{b_{p}/p \leq t \leq 1} \bigg|\frac{\sum_{\ell \in \mathcal{H}_{0}}\mathbb{I}\{P_{\ell} \leq t\}}{|\mathcal{H}_{0}|t} - 1\bigg| \overset{\PP}{\to} 0.
\end{align*}
Putting all these pieces together, we obtain~\eqref{eq_FDP_one_sample_convergence}. 
\end{proof}


\subsection{Large-scale Two-sample Simultaneous Testing}
\label{subsection_two_sample_simultaneous_testing}
We consider the following high dimensional two-sample global test, 
\begin{align}
\label{global_test_two}
    H_{0} : \theta_{\ell} = \theta_{\ell}^{\circ} \enspace \mathrm{for\ all} \enspace \ell \in [p] \enspace \mathrm{versus} \enspace H_{1} : \theta_{\ell} \neq \theta_{\ell}^{\circ} \enspace \mathrm{for\ some} \enspace \ell \in [p].
\end{align}
Given the HL estimators $\hat{\Theta}_{\ell} = \mathrm{median}\{X_{i\ell} - Y_{j\ell} : i \in [n], j \in [m]\}$, $\ell \in [p]$, for $\Theta_{\ell} = \theta_{\ell} - \theta_{\ell}^{\circ}$, we shall the null hypothesis $H_{0}$ in~\eqref{global_test_two} whenever $\max_{\ell \in [p]} |\hat{\Theta}_{\ell}|$ exceeds some threshold. To this end, we develop a Gaussian approximation for $\max_{\ell \in [p]} |\hat{\Theta}_{\ell} - \Theta_{\ell}|$. More specifically, let $\bm{\mathcal{Z}} = (\mathcal{Z}_{1}, \ldots, \mathcal{Z}_{p})^{\top}$ be a~\emph{p}-dimensional centered Gaussian random vector with 
\begin{align*}
    \Cov(\mathcal{Z}_{k}, \mathcal{Z}_{\ell}) = \frac{N \Cov(\bar{G}_{n k} - \bar{F}_{mk}, \bar{G}_{n\ell} - \bar{F}_{m\ell})}{\mathcal{U}_{k}'(\Theta_{k})\mathcal{U}_{\ell}'(\Theta_{\ell})}, \enspace k, \ell \in [p]. 
\end{align*}
In the following theorem, we establish a non-asymptotic upper bound for the Kolmogorov distance between the distribution functions of $\max_{\ell \in [p]} |\hat{\Theta}_{\ell} - \Theta_{\ell}|$ and its Gaussian analogue $\max_{\ell \in [p]} |\mathcal{Z}_{\ell}|$.

\begin{theorem}
\label{Theorem_two_sample_GA}
Let $\mathcal{U}_{\ell}(t) = \mathbb{P}(X_{1\ell} - Y_{1\ell} \leq t)$ for each $\ell \in [p]$. Assume that there exist positive constants $\bar{c}_{0}, \bar{\kappa}_{0}, \bar{c}_{1}$ and $\bar{\kappa}_{1}$ such that  
\begin{align*}
    \min_{\ell \in [p]} \inf_{|\Lambda| \leq \bar{c}_{0}} \cU_{\ell}'(\Theta_{\ell} + \Lambda) \geq \bar{\kappa}_{0} > 0 \enspace \mathrm{and} \enspace \max_{\ell \in [p]} \sup_{|\Lambda| \leq \bar{c}_{1}} |\cU_{\ell}''(\Theta_{\ell} + \Lambda)| \leq \bar{\kappa}_{1}. 
\end{align*}
Then, under Assumption~\ref{Assumption_two_sample_size}, we have 
\begin{align*}
    \sup_{z \in \RR} \left|\PP\left(\max_{\ell \in [p]} \sqrt{N}|\hat{\Theta}_{\ell} - \Theta_{\ell}| \leq z\right) - \PP\left(\max_{\ell \in [p]}|\mathcal{Z}_{\ell}| \leq z\right)\right| \leq \frac{C\log^{5/4}(Np)}{N^{1/4}}. 
\end{align*}
\end{theorem}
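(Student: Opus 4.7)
The plan is to mirror the strategy of the one-sample Gaussian approximation in Theorem~\ref{Theorem_GA_one}: first linearize the two-sample HL estimator coordinate-wise using the non-asymptotic Bahadur representation in Theorem~\ref{Theorem_quantile_consistency_two}, then invoke a high-dimensional Gaussian approximation for the resulting linear statistic (which is a sum of two independent bounded averages), and finally glue the two ingredients together via Gaussian anti-concentration (Nazarov's inequality).

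For the linearization step, I would apply Theorem~\ref{Theorem_quantile_consistency_two} with $z = C_{0}\log(Np)$ coordinate-wise: for a sufficiently large $C_{0}$, a union bound over $\ell \in [p]$ yields that, with probability at least $1 - (Np)^{-c}$,
\begin{align*}
    \max_{\ell \in [p]} \big|\sqrt{N}(\hat{\Theta}_{\ell} - \Theta_{\ell}) - S_{\ell}\big| \leq \frac{C\log(Np)}{\sqrt{N}}, \quad S_{\ell} := \frac{\sqrt{N}}{\mathcal{U}_{\ell}'(\Theta_{\ell})}\bigg\{\frac{1}{n}\sum_{i=1}^{n} G_{\ell}(\xi_{i\ell}) - \frac{1}{m}\sum_{j=1}^{m} F_{\ell}(\varepsilon_{j\ell})\bigg\}.
\end{align*}
A quick computation using $\mathcal{U}_{\ell}(\Theta_{\ell}) = 1/2$ shows that $\EE\{G_{\ell}(\xi_{1\ell})\} = \EE\{F_{\ell}(\varepsilon_{1\ell})\} = 1/2$, so each $S_{\ell}$ is centered with $\Var(S_{\ell})$ matching $\Var(\mathcal{Z}_{\ell})$ by construction.

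Next, I would recast $\bS = (S_{1}, \ldots, S_{p})^{\top}$ as a sum of $n+m$ independent centered $p$-dimensional vectors $\bW_{1}, \ldots, \bW_{n+m}$, where $\bW_{k}$ involves only $\bxi_{k}$ for $k \leq n$ and only $\bvarepsilon_{k-n}$ for $k > n$. Since $G_{\ell}, F_{\ell} \in [0,1]$ and $\mathcal{U}_{\ell}'(\Theta_{\ell}) \geq \bar{\kappa}_{0}$, combined with Assumption~\ref{Assumption_two_sample_size} (which gives $n\wedge m \asymp N$), we get a uniform entrywise bound $\max_{k, \ell}|W_{k\ell}| \leq C/\sqrt{N}$. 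Applying Theorem 2.1 of \citet{CCKK2022} for maxima of sums of independent bounded random vectors (its proof carries through without the identical-distribution assumption, as only independence and coordinatewise variance lower bounds are used) to $\bW_1,\dots,\bW_{n+m}$ delivers
\begin{align*}
    \sup_{z \in \RR}\bigl|\PP(\|\bS\|_{\infty} \leq z) - \PP(\|\bm{\mathcal{Z}}\|_{\infty} \leq z)\bigr| \leq \frac{C\log^{5/4}(Np)}{N^{1/4}}.
\end{align*}

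Finally, I would combine the two bounds by the standard smoothing argument. Set $\delta^{\sharp} = C\log(Np)/\sqrt{N}$; then for any $z$,
\begin{align*}
    \bigl|\PP(\|\sqrt{N}(\hat{\bTheta} - \bTheta)\|_{\infty} \leq z) - \PP(\|\bS\|_{\infty} \leq z)\bigr| \leq \PP(\|\sqrt{N}(\hat{\bTheta} - \bTheta) - \bS\|_{\infty} > \delta^{\sharp}) + \sup_{z} \PP(z < \|\bS\|_{\infty} \leq z + \delta^{\sharp}),
\end{align*}
and the anti-concentration term is controlled by Nazarov's inequality applied to $\bm{\mathcal{Z}}$ (whose coordinatewise variances are bounded below thanks to the assumptions on $\mathcal{U}_{\ell}'$) plus the Gaussian approximation bound already established, giving $\lesssim \delta^{\sharp}\sqrt{\log p} + \log^{5/4}(Np)/N^{1/4}$. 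Summing the three pieces yields the claimed rate.

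The main obstacle is the linearization step: the Bahadur remainder in Theorem~\ref{Theorem_quantile_consistency_two} must hold uniformly over the $p$ coordinates at a polylogarithmic deviation level, which forces a careful union bound and is only feasible because the remainder bound there has sub-exponential tails free of moment assumptions on $\bxi$ and $\bvarepsilon$. Once this uniform linear expansion is in hand, the remaining Gaussian approximation and anti-concentration are fairly standard. One subtle point worth double-checking is verifying the lower bound on the coordinatewise variances of $\bm{\mathcal{Z}}$ required by Nazarov's inequality; this should follow from the strict positivity of $\mathcal{U}_{\ell}'(\Theta_{\ell})$ together with the nontriviality of $\Var\{F_{\ell}(\varepsilon_{1\ell})\}$ and $\Var\{G_{\ell}(\xi_{1\ell})\}$, which can be ensured under the stated smoothness conditions on $\mathcal{U}_{\ell}$.
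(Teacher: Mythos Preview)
Your proposal is correct and follows essentially the same approach as the paper: the paper's own proof of Theorem~\ref{Theorem_two_sample_GA} simply states that the argument mirrors that of Theorem~\ref{Theorem_GA_one} with the one-sample quantities replaced by their two-sample analogues, and your plan carries out exactly this adaptation---linearize via Theorem~\ref{Theorem_quantile_consistency_two}, apply the high-dimensional Gaussian approximation of \citet{CCKK2022} to the resulting bounded linear statistic (now a sum over $n+m$ independent summands), and glue via anti-concentration. Your explicit recasting of $\bS$ as a sum of $n+m$ independent vectors and the remark about the non-i.i.d.\ version of the CCKK result are the only additional details beyond what the paper sketches, and they are the natural way to fill in the omitted steps.
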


\begin{proof}[Proof of Theorem~\ref{Theorem_two_sample_GA}]
We are able to prove Theorem~\ref{Theorem_two_sample_GA} by using similar arguments as in Theorem~\ref{Theorem_GA_one}. The major differences only lies in changing the one-sample to two-sample estimators. Thus, we decided to  omit the corresponding details.
\end{proof}

Motivated by Theorem~\ref{Theorem_two_sample_GA}, an asymptotic $\alpha$-level test for~\eqref{global_test_two} is given by $\mathbb{I}\{\max_{\ell \in [p]}|\hat{\Theta}_{\ell}| > Q_{1 - \alpha}^{\diamond}\}$, where $Q_{1 - \alpha}^{\diamond}$ stands for the $(1 - \alpha)$th quantile of the bootstrap statistic $\max_{\ell \in [p]} |\hat{\Theta}_{\ell}^{\star} - \hat{\Theta}_{\ell}|$, namely, 
\begin{align*}
    Q_{1 - \alpha}^{\diamond} = \inf\left\{z \in \mathbb{R} : \mathbb{P}^{\star}\left(\max_{\ell \in [p]} |\hat{\Theta}_{\ell}^{\star} - \hat{\Theta}_{\ell}| \leq z\right) \geq 1 - \alpha\right\}, \enspace \alpha \in (0, 1). 
\end{align*}
The validity of the proposed test is justified via the following theorem.

\begin{theorem}
\label{Theorem_Bootstrap_consistency_high_dimensional_GA_two_sample}
Under the conditions of Theorem~\ref{Theorem_two_sample_GA}, we have 
\begin{align*}
    \left|\mathbb{P}\left(\max_{\ell \in [p]} |\hat{\Theta}_{\ell} - \Theta_{\ell}| > Q_{1 - \alpha}^{\diamond}\right) - \alpha\right| \leq \frac{C \log^{5/4}(pN)}{N^{1/4}}. 
\end{align*}
\end{theorem}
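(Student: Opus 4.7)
[Proof sketch of Theorem~\ref{Theorem_Bootstrap_consistency_high_dimensional_GA_two_sample}]
The plan is to mirror the argument used for Theorem~\ref{Theorem_Bootstrap_consistency_high_dimensional_GA_one_sample}, but adapted to the two–sample setting, by stitching together three Kolmogorov–distance bounds: (i) the high–dimensional Gaussian approximation already supplied by Theorem~\ref{Theorem_two_sample_GA}; (ii) a conditional Gaussian approximation for the bootstrap statistic based on the Bahadur representation in Theorem~\ref{Theorem_Bootstrap_nonasymptotic_two_sample}; and (iii) a covariance–comparison step showing that the bootstrap covariance is close to $\Cov(\bm{\mathcal{Z}})$.

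First, define the coordinatewise influence functions
\begin{align*}
    \mathcal{V}_{ij,\ell} = \frac{\mathbb{I}\{\xi_{i\ell}\le \varepsilon_{j\ell}\} - (nm)^{-1}\sum_{i',j'}\mathbb{I}\{\xi_{i'\ell}\le \varepsilon_{j'\ell}\}}{\mathcal{U}_{\ell}'(\Theta_{\ell})}
\end{align*}
and set $L_{\ell}^{\star} = \mathcal{B}_{n}^{-1}\sum_{i\in[n],j\in[m]}(\omega_{i}\omega_{j+n}-1)\mathcal{V}_{ij,\ell}$. Applying Theorem~\ref{Theorem_Bootstrap_nonasymptotic_two_sample} coordinatewise with $z\asymp \omega\asymp \log(pN)$ and a union bound over $\ell\in[p]$ shows that, on an event of probability $1-C(pN)^{-c}$ in the data, the bootstrap approximation error $\sqrt{N}\|\hat{\bTheta}^{\star}-\hat{\bTheta}-\bL^{\star}\|_{\infty}$ is $\mathcal{O}\{\log(pN)/\sqrt{N}\}$ with high $\mathbb{P}^{\star}$–probability. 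This reduces the problem to approximating the distribution of $\sqrt{N}\|\bL^{\star}\|_{\infty}$ conditional on the data.

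Next, observe that $\bL^{\star}$ is, conditionally on the data, a sum of independent mean–zero random vectors whose coordinates are bounded by $C/\bar{\kappa}_{0}$. The high–dimensional CLT of~\citet{CCKK2022} applied conditionally yields, with probability $1-C(pN)^{-c}$,
\begin{align*}
    \sup_{z\ge 0}\bigl|\mathbb{P}^{\star}\{\sqrt{N}\|\bL^{\star}\|_{\infty}\le z\} - \mathbb{P}^{\star}\{\|\bm{\mathcal{Z}}^{\star}\|_{\infty}\le z\}\bigr| \lesssim \frac{\log^{5/4}(pN)}{N^{1/4}},
\end{align*}
where $\bm{\mathcal{Z}}^{\star}$ is a centered Gaussian vector with covariance equal to the conditional covariance of $\sqrt{N}\bL^{\star}$. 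A covariance–consistency step, obtained by Hoeffding's inequality (Lemma~\ref{Lemma_Hoeffding_inequality_U_statistics}) applied to the two–sample $U$–statistics $(nm)^{-1}\sum\mathbb{I}\{\xi_{i\ell}\le\varepsilon_{j\ell}\}$ together with a union bound over the $p^{2}$ entries, gives $\|\Cov^{\star}(\sqrt{N}\bL^{\star})-\Cov(\bm{\mathcal{Z}})\|_{\max}=o(1/\log^{2} p)$ with the required probability; Nazarov's Gaussian comparison inequality then upgrades this into $\sup_{z}|\mathbb{P}(\|\bm{\mathcal{Z}}^{\star}\|_{\infty}\le z)-\mathbb{P}(\|\bm{\mathcal{Z}}\|_{\infty}\le z)| \lesssim \log^{5/4}(pN)/N^{1/4}$.

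Combining these three approximations with the Gaussian anti–concentration bound $\sup_{z}\mathbb{P}(z<\|\bm{\mathcal{Z}}\|_{\infty}\le z+\delta)\lesssim \delta\sqrt{\log p}$ to absorb the Bahadur remainder yields, on a single event of probability $1-C(pN)^{-c}$,
\begin{align*}
    \sup_{z\ge 0}\Bigl|\mathbb{P}^{\star}\{\max_{\ell}|\hat{\Theta}_{\ell}^{\star}-\hat{\Theta}_{\ell}|\le z\} - \mathbb{P}\{\max_{\ell}|\hat{\Theta}_{\ell}-\Theta_{\ell}|\le z\}\Bigr| \le \frac{C\log^{5/4}(pN)}{N^{1/4}}.
\end{align*}
Since $Q_{1-\alpha}^{\diamond}$ is the conditional $(1-\alpha)$–quantile of the left–hand side, the desired conclusion follows by a standard inversion argument. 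The chief technical obstacle is the covariance–comparison step: the pairs $(\xi_{i\ell},\varepsilon_{j\ell})$ are not independent across $(i,j)$, so one has to carefully split the bootstrap variance into $X$–only, $Y$–only, and cross components, and then control each via $U$–statistic concentration uniformly in $\ell,k\in[p]$.
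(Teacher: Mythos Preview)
Your outline is aligned with the paper's intended argument: the paper's own proof is literally a one--line reference to the one--sample case (Theorem~\ref{Theorem_Bootstrap_consistency_high_dimensional_GA_one_sample}), and your three--step scheme (Bahadur linearization, conditional high--dimensional CLT, covariance comparison) is exactly the skeleton of that one--sample proof transported to two samples.

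There is one step that does not go through as you wrote it. You assert that ``$\bL^{\star}$ is, conditionally on the data, a sum of independent mean--zero random vectors''. This is not true as stated: $L_{\ell}^{\star}=\mathcal{B}_n^{-1}\sum_{i,j}(\omega_i\omega_{j+n}-1)\mathcal{V}_{ij,\ell}$ is a bilinear form in the two independent weight families, not a sum of $n+m$ independent summands, and $\mathcal{B}_n$ is itself random. What the paper actually does in the one--sample analogue (see the passage around~\eqref{eq_nonasymptotic_Bahadur_bootstrap_one_sample_log}) is a second Hoeffding decomposition in the \emph{weights}: write $\omega_i\omega_{j+n}-1=(\omega_i-1)(\omega_{j+n}-1)+(\omega_i-1)+(\omega_{j+n}-1)$, replace $\mathcal{B}_n$ by $nm$ via Lemma~\ref{Lemma_Hoeffding_inequality_U_statistics}, and show the degenerate cross term is $\mathcal{O}\{\log(pN)/N\}$ under~$\mathbb{P}^{\star}$. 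Only after this does one obtain the genuine linear representation $n^{-1}\sum_i(\omega_i-1)\bm{\mathcal D}_i^{X}+m^{-1}\sum_j(\omega_{j+n}-1)\bm{\mathcal D}_j^{Y}$, to which the high--dimensional CLT of~\citet{CCKK2022} applies. You allude to this splitting only at the very end, and only for the covariance step; it is needed earlier, before invoking the CLT.

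A secondary difference in execution: in the one--sample proof the paper replaces the empirical influences $\mathcal{D}_{i\ell}$ by the population influences $D_{i\ell}$ (via the $\ell_2$ bound $\max_\ell\sum_i(D_{i\ell}-\mathcal{D}_{i\ell})^2\lesssim\log(np)$ and Bentkus' inequality) and then applies a standard multiplier bootstrap result to i.i.d.\ summands. Your route---apply the conditional CLT with the empirical influences and then compare covariances via Nazarov---is a legitimate alternative and ultimately equivalent, but it is not literally the paper's ordering. Either way, once the weight decomposition above is inserted, your sketch closes.
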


\begin{proof}[Proof of Theorem~\ref{Theorem_Bootstrap_consistency_high_dimensional_GA_two_sample}]
The proof of Theorem~\ref{Theorem_Bootstrap_consistency_high_dimensional_GA_two_sample} can be derived similarly by following the proof procedure of Theorem~\ref{Theorem_Bootstrap_consistency_high_dimensional_GA_one_sample} by replacing the one-sample estimator with the two-sample version. Therefore, we omit the details. 
\end{proof}




\subsection{Proof of Theorem~\ref{Theorem_FDP_consistency_2}}
The proof of Theorem~\ref{Theorem_FDP_consistency_2} can be derived similarly by following the proof procedure of Theorem~\ref{Theorem_FDP_consistency} by changing the one-sample estimator to the two-sample version. Therefore, we omit the details.

\end{document}